\newtheorem{thm}{Theorem}[section]
\newtheorem{prop}[thm]{Proposition}
\newtheorem{lem}[thm]{Lemma}
\newtheorem{cor}[thm]{Corollary}
\newtheorem{dfn}[thm]{\bf Definition}
\newtheorem{rem}[thm]{\bf Remark}
\numberwithin{equation}{section}
\author{Javad Rastegari}
\address{Department of Mathematics\\
University of Western Ontario\\
London, Canada}
\email{jrastega@uwo.ca}
\author{Gord Sinnamon}
\address{Department of Mathematics\\
University of Western Ontario\\
London, Canada}
\email{sinnamon@uwo.ca}
\thanks{Supported by the Natural Sciences and Engineering Research Council of Canada}
\keywords{Fourier series, Fourier coefficients, weights, Lorentz space}
\subjclass[2010]{Primary 42B35, Secondary 46E30, 42B05}
\begin{document}

\title{Fourier series in weighted Lorentz spaces}

\begin{abstract} The Fourier coefficient map is considered as an operator from a weighted Lorentz space on the circle to a weighted Lorentz sequence space. For a large range of Lorentz indices, necessary and sufficient conditions on the weights are given for the map to be bounded. In addition, new direct analogues are given for known weighted Lorentz space inequalities for the Fourier transform. Applications are given that involve Fourier coefficients of functions in LlogL and more general Lorentz-Zygmund spaces. \end{abstract}
\maketitle
\section{Introduction}

The study of weighted Fourier inequalities has so far focused on the Fourier transform of functions on $\mathbb{R}$. Very little, for general weights, has been accomplished for the Fourier coefficient map, $f\mapsto\hat f$, where
\[
\hat f(n)=\int_0^1 e^{-2\pi in x}f(x)\,dx,\quad f\in L^1(\mathbb{T}),
\]
for each $n\in \mathbb{Z}$. Although there are many similarities between this map and the Fourier transform on $\mathbb{R}$, the compactness of the domain of $f$ and the discreteness of the domain of $\hat f$ make the theory substantially different.

The Fourier transform and Fourier coefficient map are of fundamental importance in harmonic analysis. Following the successful characterization of weighted Lebesgue-space inequalities for the Hilbert transform and related operators, B. Muckenhoupt proposed the characterization of weighted Lebesgue-space Fourier inequalities as an important goal for the field. From subsequent work by J. Benedetto, H. Heinig, and R. Johnson in \cite{BH1,BHJ} and particularly in \cite{BH2}, weighted Lorentz-space Fourier inequalities emerged as a powerful technique for proving weighted Lebesgue-space Fourier inequalities, as well as being significant in their own right. Here we extend and adapt work from \cite{ECF,FTLS,japan} to give weighted Lorentz-space Fourier inequalities for the Fourier coefficient map. Our results include, for a large range of indices $p$ and $q$, necessary and sufficient conditions on weights $u$ and $w$ for which the inequality,
\begin{equation}\label{GammaGamma}
\|\hat f\|_{\Gamma_q(u)}\le C\|f\|_{\Gamma_p(w)},\quad f\in L^1(\mathbb{T}),
\end{equation}
holds for some constant $C$ independent of $f$. It also includes direct analogues, for the Fourier coefficient map, of the Lorentz-norm Fourier inequalities given in \cite{BH2}.

The Lorentz $\Gamma$-spaces in the inequality above are defined at the end of this introduction, along with the more classical $\Lambda$-spaces and the $\Theta$-spaces that figure prominently in our weight characterization. Section \ref{QC} is devoted to proving a representation theorem for certain generalized quasi-concave functions that will be needed to prove sufficient conditions for (\ref{GammaGamma}). Section \ref{NC} contains a rather technical construction of the test functions that give necessary conditions for (\ref{GammaGamma}). In Section \ref{WC} these are combined to prove our main results: Propositions \ref{sufficient series} and \ref{CC} give a number of conditions that are sufficient to imply (\ref{GammaGamma}) for various index ranges. Theorems \ref{NandS}--\ref{necessary sufficient averaging omega_z series} show that in a certain index range these conditions are also necessary. Applications to Fourier inequalities between Lorentz spaces with specific weights appear in Section \ref{App}.

Our Fourier inequalities are stated and proved for $f\in L^1(\mathbb{T})$ only. Here the circle $\mathbb{T}$ is identified with the real interval $[0,1]$. In most cases they involve a norm or quasi-norm in which the $L^1$ functions are dense. In such a cases it is standard to extend the Fourier coefficient map so that the inequality remains valid. This is left to the reader. 

We will make use of the Fourier coefficient map's well-known group invariance properties: If $f\in L^1(\mathbb{T})$,  $g(x) = e^{2\pi in_0 x} f(x)$ and $h(x) = f(x-x_0)$, then 
\begin{equation}\label{Fourier translation}\hat g(n) = \hat f(n-n_0)\quad\text{and}\quad \hat h(n) = e^{-2\pi inx_0}\hat f(n)
\end{equation}
for any $n_0\in\mathbb{Z}$ and any $x_0\in\mathbb{T}$. 

Throughout the paper, $L^+$ denotes the collection of non-negative Lebesgue measurable functions on $(0,\infty)$ and, for $0<p<\infty$ and $w\in L^+$, the weighted Lebesgue space $L^p(w)$ denotes the normed (or quasi-normed) space of Lebesgue measurable functions $h$ on $(0,\infty)$ for which
\[
\|h\|_{p,w}=\bigg(\int_0^\infty|h(t)|^pw(t)\,dt\bigg)^{1/p}<\infty.
\]
The Lebesgue spaces, for $1\le p\le \infty$, over a general measure space $(X,\mu)$, are defined in the usual way and denoted by $L^p_\mu$.

If $\{(a_i,b_i),i\in I\}$ is a (necessarily finite or countable) collection of disjoint subintervals of $(0,\infty)$ we define the averaging operator $A$ by,
\begin{equation}\label{ave}
Af(x)=\begin{cases}\frac1{b_i-a_i}\int_{a_i}^{b_i}f(t)\,dt,&x\in(a_i,b_i),\\
f(x),&x\notin\cup_{i\in I}(a_i,b_i).\end{cases}
\end{equation}
The class of all such operators $A$ is denoted $\mathcal{A}$. It is an easy exercise to show that each $A\in\mathcal{A}$ maps $L^+$ to $L^+$ and is formally self-adjoint, that is, for all $f,g\in L^+$,
\[
\int_0^\infty Af(t)g(t)\,dt=\int_0^\infty f(t)Ag(t)\,dt.
\]

For $u\in L^+$, $u^o$ denotes the {\it level function} of $u$ with respect to Lebesgue measure on $(0,\infty)$. If the function $\int_0^xu(t)\,dt$ lies under any line on $(0,\infty)$ then it has a well-defined least concave majorant. This majorant is absolutely continuous and so may be represented as an integral. The function $u^o$ is defined by the requirement that $\int_0^x u^o(t)\,dt$ be the least concave majorant of $\int_0^x u(t)\,dt$. We may take $u^o$ to be decreasing, since it necessarily agrees with a decreasing function almost everywhere. Moreover, if $u$ is decreasing then $u^o=u$.

If $\int_0^xu(t)\,dt$ does not lie under any line on $(0,\infty)$ the level function $u^o$ may be defined as the limit of an increasing sequence of level functions. For details, and additional properties of the level function, see \cite{CCDS,LRI,ECF,FTLS,TMWNI,MBFS}.

\subsection{Lorentz Spaces}\label{LS} 
Let $(X, \mu)$ be a $\sigma$-finite measure space. For $f\in L^1_\mu+L^\infty_\mu$, the rearrangement, $f^*$, of $f$ with respect to $\mu$ (see \cite{BS}) is the generalized inverse of the distribution function 
\[
\mu_f(\lambda)=\mu\{x\in X:|f(x)|>\lambda\},
\]
defined by
\[
f^*(t)=\inf\{\lambda>0:\mu_f(\lambda)\le t\}.
\]
The rearrangement is a non-negative, decreasing, Lebesgue measurable function on $(0,\infty)$, and so is
\[
f^{**}(t) =\frac1t\int_0^t f^*.
\]
Define
\[
\|f\|_{\Lambda_p(w)}=\|f^*\|_{p,w},\ 
\|f\|_{\Theta_p(w)}=\sup_{h^{**}\le f^{**}}\|h^*\|_{p,w}, \text{ and }
\|f\|_{\Gamma_p(w)}=\|f^{**}\|_{p,w}.
\]
Here $h$ is a function on $(0,\infty)$ and $h^*$ is its rearrangement with respect to Lebesgue measure. Since $f^*\le f^{**}$ it is easy to verify that, for every $f$,
\begin{equation}\label{LTG}
\|f\|_{\Lambda_p(w)}\le\|f\|_{\Theta_p(w)}\le\|f\|_{\Gamma_p(w)}.
\end{equation}
Define $\Lambda_p(w)$ to be the set of $\mu$-measurable functions $f$ for which $\|f\|_{\Lambda_p(w)}$ is finite, and define $\Theta_p(w)$ and $\Gamma_p(w)$ correspondingly. Clearly, $\Gamma_p(w)\subseteq \Theta_p(w)\subseteq \Lambda_p(w)$.

When $1<p<\infty$, $\|\cdot\|_{\Theta_p(w)}$ and $\|\cdot\|_{\Gamma_p(w)}$ are norms for any non-trivial weight $w\in L^+$ (see \cite{japan}) and $\|\cdot\|_{\Lambda_p(w)}$ is a norm whenever $w$ is decreasing. However, if there exists a constant $c$ such that 
\begin{equation}\label{AMH}
\|f\|_{\Gamma_p(w)}\le c\|f\|_{\Lambda_p(w)}
\end{equation}
for all $\mu$-measurable $f$, then $\|\cdot\|_{\Lambda_p(w)}$ is equivalent to both of the norms, $\|\cdot\|_{\Theta_p(w)}$ and $\|\cdot\|_{\Gamma_p(w)}$. According to \cite{AM} such a $c$ exists whenever $w\in B_p$, that is, whenever there exists a constant $b_p(w)$ such that 
\[
\int_t^\infty \frac{w(s)}{s^p}\,ds\le \frac {b_p(w)}{t^p}\int_0^t w(s)\,ds,\quad t>0.
\]
When $p=1$ the situation is different. If $w\in B_{1,\infty}$, that is, if there exists a constant $b_1(w)$ such that
\[
\frac1y\int_0^yw(t)\,dt\le \frac{b_1(w)}x\int_0^xw(t)\,dt,\quad0<x<y<\infty,
\]
then
$\|\cdot\|_{\Lambda_1(w)}$ is equivalent to the norm $\|\cdot\|_{\Theta_1(w)}=\|\cdot\|_{\Lambda_1(w^o)}$. This follows from Lemma 2.2 and Lemma 2.5 of \cite{FTLS}.

When the underlying measure $\mu$ is Lebesgue measure, or any other infinite non-atomic measure, $\|\cdot\|_{\Lambda_p(w)}$ is a norm if and only if $w$ is decreasing. Also, the $B_p$ condition (when $p>1$) and the $B_{1,\infty}$ condition (when $p=1$) are necessary and sufficient for $\|\cdot\|_{\Lambda_p(w)}$ to be equivalent to a norm. See \cite{Sa} and \cite{CGS}.

If the underlying measure $\mu$ is finite, and $f\in L^1_\mu+L^\infty_\mu=L^1_\mu$, then $f^*$ is supported on $(0,\mu(X))$. Thus, $\|f\|_{\Lambda_p(w)}$ depends only on the restriction of $w$ to $(0,\mu(X))$. However, $f^{**}$ is not supported on $(0,\mu(X))$ so $\|f\|_{\Gamma_p(w)}$ does depend on values of $w$ outside $(0,\mu(X))$, but only through the value of $\int_{\mu(X)}^\infty w(t)\frac{dt}{t^p}$. 

If the underlying measure $\mu$ is counting measure on $\mathbb Z$, the space $L^1_\mu+L^\infty_\mu$ may be identified with a space of sequences. It is possible to define the rearrangement of a sequence directly to obtain another sequence but we will stick with the above definition, viewing $L^1_\mu+L^\infty_\mu$ as a space of $\mu$-measurable functions, with decreasing functions on $(0,\infty)$ as their rearrangements. This is only a notational difference; the decreasing functions we obtain are constant on the intervals, $[n,n+1)$ for $n=0,1,\dots$ so each may be identified with the corresponding rearranged sequence if desired.

\section{Quasi-Concave Functions}\label{QC}

Functions with two monotonicity conditions arise naturally in our study of Fourier series in Lorentz spaces. Let $\alpha+\beta>0$. By $\Omega_{\alpha,\beta}$ we mean the collection of all functions  $f\in L^+$ such that $x^{\alpha}f(t)$ is increasing and $x^{-\beta}f(t)$ is decreasing. Notice that $\Omega_{\alpha,\beta}$ is a cone, being closed under addition and under multiplication by positive scalars. Functions in $\Omega_{0,1}$ are called {\it quasi-concave} because they are equivalent to concave functions, and functions in $\Omega_{\alpha,\beta}$ are called {\it generalized quasi-concave} functions. 

Our chief interest will be in the cone $\Omega_{2,0}$, but we begin by looking at all the cones $\Omega_{\alpha,\beta}$ together because they are related by simple transformations.  For instance, if $\lambda>0$ and $g(t)=t^\gamma f(t^{1/\lambda})$, then $g \in \Omega_{\alpha,\beta}$ if and only if $f \in \Omega_{\lambda(\alpha+\gamma),\lambda(\beta-\gamma)}$. 

Besides being in $\Omega_{2,0}$, the functions we encounter are constant on the interval $(0,1)$. To deal with this additional restriction in general terms we introduce the cones,
\[
P_\xi^r=\{f\in L^+:t^{-r}f(t)\text{ is constant on }(0,\xi)\}
\]
and set $P=P_1^0$.  

\begin{dfn}
Maps  $A : L_{\nu}^+ \to L_{\mu}^+$  and $B : L_{\mu}^+ \to L_{\nu}^+$ are called formal adjoints provided
\[
\int_Y Af(y) g(y) \, d\mu(y) = \int_X f(x)Bg(x) \, d\nu(x)
\]
for all $f\in L_{\nu}^+$ and $g\in L_{\mu}^+$.
\end{dfn}
The following lemma is a modification of Lemma 4 in \cite{japan}. The lemma was applied outside its scope in Theorem 6 of \cite{japan}. In the version below we widen the scope to include all operators $A$ having formal adjoints. This includes the averaging operators introduced in (\ref{ave}) and fills the gap in the proof of Theorem 6 of \cite{japan}.
\begin{lem}\label{J.4} Let $0 < p \le 1 \le q < \infty$. Suppose $(Y, \mu)$, $(X, \nu)$, $(T, \lambda)$ are $\sigma$-finite measure spaces, $k(x, t)\ge0$ is a  $\nu\times\lambda$-measurable function, and $A : L_\nu^+ \to  L_\mu^+$ has a formal adjoint. Define $K$ by $Kh(x) = \int_T k(x, t) h(t) \, d\lambda(t)$ and let $k_t (x) = k(x,t)$. Then, for any $u\in L^+_{\mu}$ and $v\in L^+_\nu$,
\begin{equation}\label{J.4.h}
\sup_{h\ge 0} \dfrac{\|AKh\|_{q,u\mu}}{\|Kh\|_{p,v\nu}} \le  \operatornamewithlimits{ess\,sup}_{t\in T} \frac{\|Ak_t\|_{q,u\mu}}{\|k_t\|_{p,v\nu}}.
\end{equation}
\end{lem}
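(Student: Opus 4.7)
The plan is to use the formal adjoint of $A$ to move $A$ past the kernel integral that defines $K$, then apply Minkowski's integral inequality (valid since $q\ge1$) to pull $Ak_t$ out of the $q$-norm, invoke the hypothesized essential-supremum bound pointwise in $t$, and close with the reverse Minkowski integral inequality (valid since $0<p\le1$) to reassemble $\|Kh\|_{p,v\nu}$.

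First I would establish the pointwise identity
\[
AKh(y)=\int_T (Ak_t)(y)\,h(t)\,d\lambda(t)\qquad\text{for $\mu$-a.e.\ }y.
\]
This comes by testing against an arbitrary $g\in L_\mu^+$: using the formal adjoint $B$ once on the outside, then interchanging the order of integration (legitimate because everything is nonnegative and the measures are $\sigma$-finite), and finally using the adjoint again on the inner integral against $k_t$,
\[
\int_Y AKh\cdot g\,d\mu=\int_X Kh\cdot Bg\,d\nu=\int_T h(t)\!\int_X k_t\, Bg\,d\nu\,d\lambda(t)=\int_Y g(y)\!\int_T h(t)(Ak_t)(y)\,d\lambda(t)\,d\mu(y).
\]
Since $g\in L_\mu^+$ is arbitrary and $\mu$ is $\sigma$-finite, the displayed identity follows.

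Next, since $q\ge1$, Minkowski's integral inequality on $(Y,u\mu)$ yields
\[
\|AKh\|_{q,u\mu}\le\int_T \|Ak_t\|_{q,u\mu}\,h(t)\,d\lambda(t).
\]
Writing $M$ for the essential supremum on the right side of (\ref{J.4.h}), I replace $\|Ak_t\|_{q,u\mu}$ by $M\|k_t\|_{p,v\nu}$ for $\lambda$-a.e.\ $t$, pushing $h(t)$ inside the $p$-norm. The reverse Minkowski integral inequality for nonnegative integrands (valid since $0<p\le1$) then gives
\[
\int_T\Bigl(\int_X (h(t)k(x,t))^p v(x)\,d\nu(x)\Bigr)^{\!1/p}d\lambda(t)\le\Bigl(\int_X\Bigl(\int_T h(t)k(x,t)\,d\lambda(t)\Bigr)^{\!p}v(x)\,d\nu(x)\Bigr)^{\!1/p}=\|Kh\|_{p,v\nu},
\]
and dividing by $\|Kh\|_{p,v\nu}$ finishes (\ref{J.4.h}).

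The main subtlety I anticipate is justifying the first identity from only the formal-adjoint hypothesis, which does not itself assert linearity of $A$ or commutation with the $T$-integral. The duality test against arbitrary $g\in L_\mu^+$ extracts additivity, positive homogeneity, and the interchange all at once; points $t$ where $\|k_t\|_{p,v\nu}=0$ cause no trouble, since the same duality forces $Ak_t=0$ there and they contribute nothing to the $\lambda$-integral.
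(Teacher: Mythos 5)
Your strategy is in essence the paper's: use the formal adjoint together with Tonelli, control the $L^q$ side by the essential supremum, and close with the reverse Minkowski integral inequality for $0<p\le1$ (your last display, giving $\int_T h(t)\|k_t\|_{p,v\nu}\,d\lambda(t)\le\|Kh\|_{p,v\nu}$, is exactly the paper's first step). However, the way you organize the $L^q$ side has a genuine gap. The pointwise identity $AKh(y)=\int_T(Ak_t)(y)h(t)\,d\lambda(t)$, the final interchange of the $t$- and $y$-integrations in your duality display, and the subsequent application of Minkowski's integral inequality in $L^q(u\mu)$ all require the function $(t,y)\mapsto (Ak_t)(y)$ to be $\lambda\times\mu$-measurable (or to admit a jointly measurable version). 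Nothing in the hypotheses supplies this: $A$ is an arbitrary map of $L^+_\nu$ into $L^+_\mu$ possessing a formal adjoint, it is not assumed to be given by a kernel, and each $Ak_t$ is determined only up to a $\mu$-null set depending on $t$, so there is no canonical jointly measurable choice. The only joint measurability assumed is that of $k$ itself, which licenses interchanging the $x$- and $t$-integrations but not the $y$- and $t$-integrations. This is not a pedantic point here, since the whole purpose of the lemma (as the paper explains) is to cover a general $A$ with a formal adjoint.

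The paper's proof is arranged precisely to avoid this: it never moves the $t$-integral past the $y$-integral with $Ak_t$ inside. It stops at
\[
\int_Y AKh\cdot gu\,d\mu=\int_T\Big(\int_X k_t\,B(gu)\,d\nu\Big)h(t)\,d\lambda(t),
\]
applies the adjoint identity and H\"older \emph{separately for each fixed $t$} to get the scalar bound $\int_X k_tB(gu)\,d\nu\le\|Ak_t\|_{q,u\mu}\le C\|k_t\|_{p,v\nu}$ for $\lambda$-a.e.\ $t$, and then takes the supremum over $g$ with $\|g\|_{q',u\mu}\le1$. Your argument is repaired by exactly this rearrangement: do not pass to the pointwise identity, but keep the estimate in dual form. (Your closing remark about $t$ with $\|k_t\|_{p,v\nu}=0$ is also slightly off: this forces $k_t=0$ only where $v>0$, not $Ak_t=0$; such $t$ are nevertheless harmless because finiteness of the essential supremum forces $\|Ak_t\|_{q,u\mu}=0$ for $\lambda$-a.e.\ such $t$.)
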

\begin{proof} Let $C$ be the right-hand side of (\ref{J.4.h}) and fix $h\in L^+_\lambda$. Since $0<p\le1$, Minkowski's integral inequality shows that
\begin{align*}
\int_T\|k_t\|_{p,v\nu}h(t)\,d\lambda(t)
&=\int_T\bigg(\int_Xk(x,t)^pv(x)\,d\nu(x)\bigg)^{1/p}h(t)\,d\lambda(t)\\
&\le\bigg(\int_X\bigg(\int_Tk(x,t)h(t)
\,d\lambda(t)\bigg)^pv(x)\,d\nu(x)\bigg)^{1/p}\\
&=\|Kh\|_{p,v\nu}.
\end{align*}

Let $B:L_\mu^+ \to  L_\nu^+$ be a formal adjoint of $A$. For any $g\in L^+_\mu$ with $\|g\|_{q',u\mu}\le1$, Tonelli's theorem implies,
\begin{align*}
\int_YAKh(y)g(y)u(y)\,d\mu(y)&=\int_XKh(x)B(gu)(x)\,d\nu(x)\\
&=\int_T\bigg(\int_Xk_t(x)B(gu)(x)\,d\nu(x)\bigg) h(t)\,d\lambda(t).
\end{align*}
But, by H\"older's inequality,
\[
\int_Xk_t(x)B(gu)(x)\,d\nu=\int_YAk_t(y)g(y)u(y)\,d\mu(y)\le\|Ak_t\|_{q,u\mu}\le C\|k_t\|_{p,v\nu},
\]
for $\lambda$-almost every $t$. Therefore,
\[
\int_Y (AKh)(y)g(y)u(y)\,d\mu(y)
\le C\int_T\|k_t\|_{p,v\nu}h(t)\,d\lambda(t)\le C\|Kh\|_{p,v\nu}.
\]
Taking the supremum over all such $g$ yields
\[
\|AKh\|_{q,u\mu}\le C\|Kh\|_{p,v\nu}.
\]
Since $h\in L^+_\lambda$ was arbitrary, the conclusion follows.
\end{proof}

In order to apply this result to $P_\xi^\beta\cap\Omega_{\alpha,\beta}$, we show that the range of a certain positive operator is a large subset of this cone. The positive operator is $K_{\xi}^{\alpha,\beta}$, defined by
\[
K_{\xi}^{\alpha,\beta}h(x)=\int_\xi^\infty k^{\alpha,\beta}(x,t)h(t)\,dt,
\]
where $k^{\alpha,\beta}(x,t)=\min(x^\beta t^{-\alpha},x^{-\alpha} t^\beta)$. It is easy to check that for fixed $t$, $k^{\alpha,\beta}_t(x)$ is in $\Omega_{\alpha,\beta}$ and that $K_{\xi}^{\alpha,\beta}h(x)\in P_\xi^\beta\cap\Omega_{\alpha,\beta}$ whenever $h\in L^+$. Thus, the image of $L^+$ under $K_{\xi}^{\alpha,\beta}$ is a subset of $ P_\xi^\beta\cap\Omega_{\alpha,\beta}$. What we mean by ``large subset'' is in Lemma \ref{K approximate series}. 

We start with a lemma stating the geometrically obvious fact that if a function is linear on some interval, so is its least concave majorant.
\begin{lem}\label{lcm line segment} Suppose $\tilde g$ is the least concave majorant of $g\in L^+$. If $\xi>0$, $c\ge0$ and $g(x) = cx$ on $(0,\xi)$ then $\tilde g(x) = x\tilde g(\xi)/\xi$ on $(0,\xi)$.
\end{lem}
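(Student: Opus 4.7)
The plan is to prove the two inequalities $\tilde g(x) \le x\tilde g(\xi)/\xi$ and $\tilde g(x) \ge x\tilde g(\xi)/\xi$ separately on $(0,\xi)$.

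The lower bound is immediate from concavity of $\tilde g$: for $0<\epsilon<x<\xi$, writing $x$ as a convex combination of $\epsilon$ and $\xi$ gives
\[
\tilde g(x) \ge \frac{\xi - x}{\xi - \epsilon}\tilde g(\epsilon) + \frac{x - \epsilon}{\xi - \epsilon}\tilde g(\xi);
\]
letting $\epsilon\to 0^+$ and dropping the non-negative first term (since $\tilde g \ge g \ge 0$) yields $\tilde g(x) \ge x\tilde g(\xi)/\xi$.

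For the upper bound, I would build a concave majorant $h$ of $g$ with the desired linear shape on $(0,\xi]$ and then invoke minimality of $\tilde g$. Set $h(x) = x\tilde g(\xi)/\xi$ on $(0,\xi]$ and $h(x) = \tilde g(x)$ on $(\xi,\infty)$. The inequality $h \ge g$ is straightforward: on $(0,\xi)$ it reduces to $\tilde g(\xi) \ge c\xi$, which follows by letting $x\to\xi^-$ in $\tilde g(x)\ge cx$ and using continuity of the concave function $\tilde g$ at $\xi$; on $[\xi,\infty)$ it is just $\tilde g \ge g$. Provided $h$ is concave, minimality of $\tilde g$ gives $\tilde g \le h$ and, in particular, $\tilde g(x) \le x\tilde g(\xi)/\xi$ on $(0,\xi)$.

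The main obstacle is verifying concavity of $h$ at the junction $x=\xi$: one must show the left slope $\tilde g(\xi)/\xi$ is at least the right derivative $D^+\tilde g(\xi)$. The monotone-slope property of the concave function $\tilde g$ says the secant slope $(\tilde g(\xi) - \tilde g(\epsilon))/(\xi - \epsilon)$ from any $\epsilon\in(0,\xi)$ dominates $D^+\tilde g(\xi)$; passing $\epsilon\to 0^+$ produces $(\tilde g(\xi) - \tilde g(0^+))/\xi \ge D^+\tilde g(\xi)$, and this quantity is itself at most $\tilde g(\xi)/\xi$ because $\tilde g(0^+)\ge 0$. The borderline case $\tilde g(0^+)=+\infty$ is excluded, since a non-negative concave function on $(0,\infty)$ cannot have its slopes tending to $-\infty$ near an interior point.
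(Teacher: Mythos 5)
Your proof is correct, and both halves match the paper's in spirit: the lower bound is the same chord argument, and the upper bound in both cases exhibits an explicit concave majorant of $g$ that is linear with slope $\tilde g(\xi)/\xi$ on $(0,\xi)$ and then invokes minimality. The difference is the choice of majorant. You glue the line $x\mapsto x\tilde g(\xi)/\xi$ to $\tilde g$ at $\xi$, which forces you to verify concavity at the junction via the monotone-slope inequality $\tilde g(\xi)/\xi \ge (\tilde g(\xi)-\tilde g(0^+))/\xi \ge D^+\tilde g(\xi)$ (and to rule out $\tilde g(0^+)=+\infty$). The paper instead takes the \emph{entire} line $\lambda x$ with $\lambda=\tilde g(\xi)/\xi$ as the majorant on all of $(0,\infty)$: for a non-negative concave function the chord through the origin and $(\xi,\tilde g(\xi))$ automatically satisfies $\lambda x\ge\tilde g(x)\ge g(x)$ on $[\xi,\infty)$, so the only thing to check is $\lambda\ge c$ (which both proofs obtain identically, by continuity of $\tilde g$ at $\xi$). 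The paper's choice thus buys a trivial verification with no derivative comparison, at the cost of using the second half of the chord property; your glued majorant is the more natural first guess but carries the extra junction argument, which you handle correctly.
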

\begin{proof} Let $\lambda=\tilde g(\xi)/\xi$. Since $\tilde g\ge0$ is concave, $\lambda x\le \tilde g(x)$ on $(0,\xi]$ and $\lambda x\ge \tilde g(x)$ on $[\xi,\infty)$. Since $\tilde g$ is continuous, 
$$
\lambda = \tilde g(\xi)/\xi
=\lim_{x\to \xi-}\tilde g(x)/x
\ge\lim_{x\to \xi-}g(x)/x=c.
$$
Thus $\lambda x\ge cx=g(x)$ on $(0,\xi)$ and $\lambda x\ge\tilde g(x)\ge g(x)$ on $[\xi,\infty)$. So $\lambda x$ is a concave majorant of $g$ and therefore $\lambda x\ge\tilde g(x)$ on $(0,\infty)$. In particular, $\tilde g(x) = \lambda x$ on $(0,\xi)$.
\end{proof}

The next lemma shows that every function in $P_\xi^1\cap\Omega_{0,1}$ is equal, up to equivalence, to the limit of an increasing sequence of functions in the range of $K^{0,1}_{\xi}$.
\begin{lem}\label{approximate concave series}  Let $\xi\ge 0$ and let $g$ be a quasi-concave function such that $g(x)/x$ is constant on $(0,\xi)$. If $\tilde g$ is the least concave majorant of $g$, then $g \le \tilde g\le 2g$ and there exists a sequence of functions $\ell_n\in L^+$ such that $K^{0,1}_{\xi}\ell_n$ increases to $\tilde g$ pointwise.
\end{lem}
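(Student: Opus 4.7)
The inequality $g\le\tilde g$ is immediate from the definition of the least concave majorant. For the reverse bound, fix $x_0>0$ and consider the linear function $\psi(y)=g(x_0)(1+y/x_0)$. Since $g\in\Omega_{0,1}$, $g$ is increasing and $g(y)/y$ is decreasing, giving $g(y)\le g(x_0)\le \psi(y)$ for $y\le x_0$ and $g(y)\le g(x_0)y/x_0\le \psi(y)$ for $y\ge x_0$. Thus $\psi$ is a concave majorant of $g$, and so $\tilde g(x_0)\le\psi(x_0)=2g(x_0)$.

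For the approximating sequence, my plan is to exploit the integral representation of concave functions. Since $\tilde g$ is concave on $[0,\infty)$ with $\tilde g(0)=0$ and, by Lemma~\ref{lcm line segment}, linear on $[0,\xi]$, the right derivative $\tilde g'$ is decreasing, constant on $(0,\xi)$, and tends to some $\beta\ge 0$ at infinity. Taking $\nu:=-d\tilde g'$ as the associated finite positive Radon measure on $[\xi,\infty)$, Fubini's theorem yields
\[
\tilde g(x)=\beta x+\int_{[\xi,\infty)}\min(x,t)\,d\nu(t).
\]
This exhibits $\tilde g$ as a superposition of the kernels $k^{0,1}_t(x)=\min(x,t)$ plus a linear tail, and I would approximate each of the three ingredients (the tail $\beta x$, the absolutely continuous part $\nu_{ac}$ of $\nu$, and the singular part $\nu_s$) from below by $K^{0,1}_\xi$-images of explicit functions in $L^+$, then take $\ell_n$ to be the sum.

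The individual approximations are explicit. For $\nu_{ac}$ with Lebesgue density $\psi_0$, take $\ell_n^{(ac)}=\min(\psi_0,n)\mathbf{1}_{[\xi,n]}$; monotone convergence gives $K^{0,1}_\xi\ell_n^{(ac)}\nearrow\int\min(x,t)\,d\nu_{ac}$. For the linear tail, take $\ell_n^{(\beta)}=\beta\mathbf{1}_{[n,n+1]}$; direct computation gives $K^{0,1}_\xi\ell_n^{(\beta)}(x)=\beta x$ on $[0,n]$, with monotone convergence elsewhere. For an atom $a\delta_{t_0}$ with $t_0>\xi$, use the left-spike $\ell_n=an\mathbf{1}_{[t_0-1/n,t_0]}$; a short algebraic manipulation reducing to the identity $(ns-1)^2\ge 0$ (with $s=t_0-x$) establishes both $K^{0,1}_\xi\ell_n\le a\min(x,t_0)$ and monotonicity in $n$. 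For an atom at $\xi$ itself, the left-spike would leave the support of $K^{0,1}_\xi$, so use instead the rescaled right-spike $\alpha_n n\mathbf{1}_{[\xi,\xi+1/n]}$ with $\alpha_n=2n\xi a/(2n\xi+1)$, chosen so that $K^{0,1}_\xi$ equals exactly $a\xi$ on $[\xi+1/n,\infty)$ and is bounded by it elsewhere.

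The principal obstacle is a possible singular continuous part of $\nu$ and, more generally, the need to coordinate countably many atomic contributions into a single sequence monotone in $n$. A singular continuous part is approximated from below by sums of atoms placed at left endpoints of a refining sequence of partitions of $[\xi,\infty)$, giving monotonicity automatically because $\min(x,\cdot)$ is increasing; a standard diagonal enumeration then assembles all three types of approximation into a single sequence $\{\ell_n\}$ with $K^{0,1}_\xi\ell_n\nearrow\tilde g$ pointwise.
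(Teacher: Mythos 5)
Your tangent-line argument for $\tilde g\le 2g$ is correct and self-contained, and your representation of $\tilde g$ as $\beta x$ plus a superposition of the kernels $\min(x,t)$ against $-d\tilde g'$ is the same underlying idea as the paper's. But there are two genuine gaps. First, the case $\xi=0$: here Lemma \ref{lcm line segment} gives no information, so $\tilde g(0+)$ may be strictly positive (e.g.\ $g\equiv 1$ is quasi-concave), and your formula $\tilde g(x)=\beta x+\int_{[\xi,\infty)}\min(x,t)\,d\nu(t)$ omits the additive constant $\tilde g(0+)$; the correct identity is $\tilde g(x)=\tilde g(0+)+\beta x+\int\min(x,t)\,d\nu(t)$. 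Your device for an atom at $\xi$ degenerates there (your $\alpha_n=2n\xi a/(2n\xi+1)$ vanishes when $\xi=0$), so this missing piece is not recovered elsewhere in your construction; you need a separate approximant, such as the paper's $(a/t)n\chi_{(\xi,\xi+1/n)}$, whose $K^{0,1}_0$-image increases to the constant $a=\tilde g(0+)$. Relatedly, $\nu$ need not be finite when $\xi=0$ (take $g(x)=\sqrt{x}$, where $\varphi(0+)=\infty$), though the representation itself survives.

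Second, the assembly of the singular continuous and atomic parts into a single sequence monotone in $n$ is asserted rather than proved, and it is exactly the delicate point. When you refine the partition, an atom at $c_i$ of mass $m_i$ splits into atoms at $c_i$ and at a new point to its right, while simultaneously the spike widths shrink; you must verify that the sum of the new, sharper spikes dominates the old, coarser spike pointwise, including on the transition intervals where neither spike has reached its limiting value. This can be made to work with a careful coupling of mesh size to spike width, but ``a standard diagonal enumeration'' does not discharge it. The paper avoids the entire decomposition of $\nu$ by using the multiplicative difference quotient $\bigl(\varphi(t)-\varphi(t\tfrac{n+1}{n})\bigr)/\bigl(t\log\tfrac{n+1}{n}\bigr)$, whose $K^{0,1}_\xi$-image is a shrinking average of the decreasing, right-continuous $\varphi$ and is therefore automatically increasing in $n$, with no case analysis on the type of the measure. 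If you adopt that device for the third piece and add the missing $\tilde g(0+)$ term, your argument closes.
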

\begin{proof} Proposition 2.5.10 of \cite{BS} shows that $g\le\tilde g\le 2g$. 

Recall that a concave function on $(0,\infty)$ is absolutely continuous on closed subintervals of $(0,\infty)$. It has left and right derivatives everywhere, the right derivative is right continuous, both are decreasing, and the right derivative is less than or equal to the left derivative at each point.

Let $\varphi$ denote the right derivative of $\tilde g$ and let $a=\tilde g(\xi) -\xi \varphi(\xi)$ if $\xi>0$ and $a=\tilde g(0+)$ if $\xi=0$.   If $\xi=0$ it is clear that $a\ge0$ and if $\xi>0$, Lemma \ref{lcm line segment} shows that $\tilde g(\xi)/\xi$ is the left derivative of $\tilde g$ at $\xi$ so in this case, too, $a\ge0$. Let $\varphi(\infty)=\lim_{t\to\infty}\varphi(t)$. For $n>\xi$ and $t>0$, set
\begin{equation}\label{hdefn}
\ell_n(t)=\varphi(\infty)\chi_{(n,n+1)}(t)+(a/t)n\chi_{(\xi,\xi+\frac1n)}(t)
+\frac{\varphi(t)-\varphi(t\frac{n+1}n)}{t\log(\frac{n+1}n)}.
\end{equation}
Since $\varphi$ is decreasing, $\ell_n\in L^+$ for all positive integers $n>\xi$.

We apply $K^{0,1}_{\xi}$ to each of the three terms separately. The first term becomes $\int_n^{n+1}\varphi(\infty)\min(x,t)\,dt$. For each $x$, this is a moving average of the increasing function $\varphi(\infty)\min(x,t)$ and is therefore increasing with $n$. It converges to $x\varphi(\infty)$.

The second term becomes $n\int_\xi^{\xi+1/n}(a/t)\min(x,t)\,dt$. This is a shrinking average of the decreasing function $a\min(x/t,1)$ and is therefore increasing with $n$. It converges to $a\min(x/\xi,1)$.

Let the third term of (\ref{hdefn}) be $\bar \ell_n(t)$. For $y>0$,
\begin{align*}
\int_y^\infty \bar \ell_n(t)\,dt
&=\frac1{\log(\frac{n+1}n)}
\lim_{M\to\infty}\bigg(\int_y^M\varphi(t)\,\frac{dt}t-\int_y^M\varphi(t\tfrac{n+1}n)\,\frac{dt}t\bigg)\\
&=\frac1{\log(\frac{n+1}n)}
\lim_{M\to\infty}\bigg(\int_y^M\varphi(t)\,\frac{dt}t-\int_{y\tfrac{n+1}n}^{M\tfrac{n+1}n}\varphi(t)\,\frac{dt}t\bigg)\\
&=\frac{\int_y^{y\tfrac{n+1}n}\varphi(t)\,\frac{dt}t}{\int_y^{y\tfrac{n+1}n}\,\frac{dt}t}
-\lim_{M\to\infty}\frac{\int_M^{M\tfrac{n+1}n}\varphi(t)\,\frac{dt}t}{\int_M^{M\tfrac{n+1}n}\,\frac{dt}t}\\
&=\frac{\int_y^{y\tfrac{n+1}n}\varphi(t)\,\frac{dt}t}{\int_y^{y\tfrac{n+1}n}\,\frac{dt}t}-\varphi(\infty).
\end{align*}
Since $\varphi$ is decreasing and right continuous, its shrinking average on $(y,y\tfrac{n+1}n)$ increases with $n$ and converges to $\varphi(y)$. Thus the last expression increases to $\varphi(y)-\varphi(\infty)$ as $n\to\infty$. The identity,
\[
K^{0,1}_{\xi}h(x)=\int_\xi^\infty\min(x,t)h(t)\,dt=\int_0^x\int_{\max(y,\xi)}^\infty h(t)\,dt\,dy,
\]
shows that $K^{0,1}_{\xi}\bar \ell_n$ also increases with $n$ and, by the Monotone Convergence Theorem,
\begin{align*}
\lim_{n\to\infty}K^{0,1}_{\xi}\bar \ell_n(x)
&=\lim_{n\to\infty}\int_0^x\int_{\max(y,\xi)}^\infty\bar \ell_n(t)\,dt\,dy\\
&=\int_0^x \varphi(\max(y,\xi))-\varphi(\infty)\,dy\\
&=\begin{cases}x\varphi(\xi)-x\varphi(\infty),&0<x<\xi;\\
\xi \varphi(\xi)+\tilde g(x)-\tilde g(\xi)-x\varphi(\infty),&0<\xi\le x;\\
\tilde g(x)-\tilde g(0+)-x\varphi(\infty),&0=\xi< x.
\end{cases}
\end{align*}
Combining the three terms of (\ref{hdefn}), we conclude that $K^{0,1}_{\xi}\ell_n$ increases with $n$. When $0<x<\xi$ the limit is, 
\[
x\varphi(\infty)+a(x/\xi)+x\varphi(\xi)-x\varphi(\infty)=x\tilde g(\xi)/\xi=\tilde g(x)
\]
by Lemma \ref{lcm line segment}. When $0<\xi\le x$ the limit is 
\[
x\varphi(\infty)+a+\xi \varphi(\xi)+\tilde g(x)-\tilde g(\xi)-x\varphi(\infty)=\tilde g(x),
\]
and when $0=\xi< x$ it is
\[
x\varphi(\infty)+a+\tilde g(x)-\tilde g(0+)-x\varphi(\infty)=\tilde g(x).
\]
This completes the proof.
\end{proof}

This approximation of quasi-concave functions can be used to give a similar result for functions in $P_{\xi}^{\beta} \cap \Omega_{\alpha,\beta}$. They can be realized as increasing limits of functions of type $K_{\xi}^{\alpha,\beta}h$, up to equivalence. This result extends Lemma 5 of \cite{japan}.

\begin{lem}\label{K approximate series} Suppose $\xi\ge0$, and $\alpha,\beta\in \mathbb{R}$ satisfy $\alpha+\beta>0$. If $f\in P_{\xi}^{\beta}\cap \Omega_{\alpha,\beta}$, then there exists $\tilde{f} \in L^+$ and a sequence of functions  $\{h_n\}$ in $L^+$ such that $f \le \tilde{f}\le 2f$ and $K_{\xi}^{\alpha,\beta} h_n$ increases to $\tilde{f}$ pointwise.
\end{lem}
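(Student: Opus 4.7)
The plan is to reduce the general case to the quasi-concave case already handled in Lemma \ref{approximate concave series}. The reduction is driven by the homogeneous change of variables noted in the introduction to this section, which identifies $\Omega_{\alpha,\beta}$ with $\Omega_{0,1}$ once $\alpha+\beta>0$.

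Concretely, set $\lambda=\alpha+\beta$ and define $g(s)=s^{\alpha/\lambda}f(s^{1/\lambda})$, so that $g(s)=x^{\alpha}f(x)$ with $s=x^{\lambda}$. Using the equivalence recorded at the start of Section \ref{QC} (with exponents $\gamma=\alpha/\lambda$ and the same $\lambda$), the hypothesis $f\in\Omega_{\alpha,\beta}$ gives $g\in\Omega_{0,1}$; in other words $g$ is increasing and $g(s)/s=x^{-\beta}f(x)$ is decreasing. The condition $f\in P_\xi^{\beta}$ says that $x^{-\beta}f(x)$ is constant on $(0,\xi)$, which under the substitution $s=x^\lambda$ becomes the statement that $g(s)/s$ is constant on $(0,\xi^\lambda)$; that is, $g\in P_{\xi^\lambda}^{1}$ (when $\xi=0$ this condition is vacuous and $\xi^\lambda=0$, so the case $\xi=0$ is handled uniformly). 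Lemma \ref{approximate concave series} then supplies a function $\tilde g$ with $g\le\tilde g\le 2g$ and a sequence $\ell_n\in L^+$ such that $K^{0,1}_{\xi^\lambda}\ell_n\uparrow\tilde g$ pointwise.

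The next step is the calculation that relates the kernels. Since $k^{\alpha,\beta}(x,t)=\min(x^{\beta}t^{-\alpha},x^{-\alpha}t^{\beta})=x^{-\alpha}t^{-\alpha}\min(x^{\lambda},t^{\lambda})$, making the substitution $t=u^{1/\lambda}$ in the defining integral of $K^{\alpha,\beta}_{\xi}h$ produces, after collecting powers,
\[
K^{\alpha,\beta}_\xi h(x)=x^{-\alpha}\int_{\xi^{\lambda}}^{\infty}\min(x^{\lambda},u)\,\ell(u)\,du=x^{-\alpha}\,K^{0,1}_{\xi^{\lambda}}\ell(x^{\lambda}),
\]
where $\ell$ is the pushforward of $h$: explicitly $\ell(u)=\lambda^{-1}u^{(1-\alpha)/\lambda-1}h(u^{1/\lambda})$, so given $\ell_n$ we may define $h_n(t)=\lambda\,t^{2\alpha+\beta-1}\ell_n(t^{\lambda})\in L^+$. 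With this choice,
\[
K^{\alpha,\beta}_\xi h_n(x)=x^{-\alpha}K^{0,1}_{\xi^\lambda}\ell_n(x^\lambda)
\]
is increasing in $n$ and, for each fixed $x$, converges to $x^{-\alpha}\tilde g(x^\lambda)$. Setting $\tilde f(x)=x^{-\alpha}\tilde g(x^{\lambda})$ and using $g(s)=x^\alpha f(x)$ to translate $g\le\tilde g\le 2g$ back, one obtains $f\le\tilde f\le 2f$, completing the construction.

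The main obstacle is the change-of-variables bookkeeping in the third paragraph: one must carefully verify the factorization $k^{\alpha,\beta}(x,t)=x^{-\alpha}t^{-\alpha}\min(x^\lambda,t^\lambda)$, track the Jacobian and the resulting powers of $t$ to define $h_n$ as a nonnegative measurable function, and confirm that the monotone convergence of $K^{0,1}_{\xi^\lambda}\ell_n$ transfers through the positive multiplicative factor $x^{-\alpha}$ to give pointwise monotone convergence of $K^{\alpha,\beta}_\xi h_n$ to $\tilde f$. Once this calculation is in place, everything else follows directly from Lemma \ref{approximate concave series}.
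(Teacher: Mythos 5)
Your proposal is correct and follows essentially the same route as the paper: substitute $s=x^{\alpha+\beta}$ to map $P_\xi^\beta\cap\Omega_{\alpha,\beta}$ into $P_{\xi^{\alpha+\beta}}^1\cap\Omega_{0,1}$, invoke Lemma \ref{approximate concave series}, and pull the approximating sequence back through the kernel identity $K^{\alpha,\beta}_\xi h(x)=x^{-\alpha}K^{0,1}_{\xi^{\alpha+\beta}}\ell(x^{\alpha+\beta})$. The only cosmetic difference is that you write the change of measure $t^{-\alpha}h_n(t)\,dt=\ell_n(s)\,ds$ out explicitly with its Jacobian, which you compute correctly.
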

\begin{proof} Within this proof let $y=x^{\alpha+\beta}$ and $s=t^{\alpha+\beta}$. Fix $f\in P_{\xi}^{\beta}\cap \Omega_{\alpha,\beta}$ and define $g$ by setting $g(y)=x^\alpha f(x)$. This ensures that $g\in P_{\xi^{\alpha+\beta}}^1\cap \Omega_{0,1}$. Let $\tilde g$ be the least concave majorant of the quasi-concave function $g$ and apply Lemma \ref{approximate concave series}, with $\xi$ replaced by $\xi^{\alpha+\beta}$, to obtain functions $\ell_n\in L^+$ such that $K_{\xi^{\alpha+\beta}}^{0,1} \ell_n$ increases to $\tilde g$ as $n\to\infty$. 

Set $\tilde f(x)=x^{-\alpha}\tilde g(y)$. Since $g \le \tilde g\le 2g$ we also have
$f \le \tilde{f}\le 2f$. (Note that $\tilde f$ is not the least concave majorant of $f$ in general.) Then define $h_n$ by requiring that $t^{-\alpha}h_n(t)\,dt=\ell_n(s)\,ds$. Evidently, $h_n\in L^+$. Also, $K_{\xi}^{\alpha,\beta} h_n(x)$ is equal to
\[
\int_\xi^\infty\min(x^\beta t^{-\alpha},x^{-\alpha}t^\beta)h_n(t)\,dt
=x^{-\alpha}\int_{\xi^{\alpha+\beta}}^\infty\min(y,s)\ell_n(s)\,ds
=x^{-\alpha}K_{\xi^{\alpha+\beta}}^{0,1}\ell_n(y).
\]
Therefore, $K_{\xi}^{\alpha,\beta} h_n(x)$ is increasing with $n$ and converges to $x^{-\alpha}\tilde g(y)=\tilde f(x)$. This completes the proof.
\end{proof}

Now we have all the machinery to prove the main result of this section. 

\begin{prop}\label{averaging operator norm series} Suppose $\xi\ge0$, and $\alpha,\beta\in \mathbb{R}$ satisfy $\alpha+\beta>0$. Let $0<p\le 1\le q < \infty$ and $u,v\in L^+$. If $A\in\mathcal{A}$, then 
\begin{equation}\label{aons}
 \sup_{t > \xi} \dfrac{\|A k^{\alpha,\beta}_t\|_{q,u}}{\|k^{\alpha,\beta}_t\|_{p,v}} \le\sup_{f \in  P_{\xi}^{\beta}\cap\Omega_{\alpha,\beta}} \dfrac{\|Af\|_{q,u}}{\|f\|_{p,v}}\le 2  \sup_{t > \xi} \dfrac{\|A k^{\alpha,\beta}_t\|_{q,u}}{\|k^{\alpha,\beta}_t\|_{p,v}}.
\end{equation}
\end{prop}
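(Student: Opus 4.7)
The plan is to split the double inequality into its two halves, handling the trivial left-hand inequality first and then reducing the right-hand one to Lemma~\ref{J.4} via the approximation afforded by Lemma~\ref{K approximate series}.

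For the left inequality, I would verify that $k^{\alpha,\beta}_t \in P_\xi^\beta\cap\Omega_{\alpha,\beta}$ for every $t>\xi$. The paper already notes membership in $\Omega_{\alpha,\beta}$. For the $P_\xi^\beta$ part, note that since $\alpha+\beta>0$, we have $x^{\alpha+\beta}<t^{\alpha+\beta}$ for $x<t$, hence $k^{\alpha,\beta}(x,t) = x^\beta t^{-\alpha}$ on $(0,t)$, so $x^{-\beta}k^{\alpha,\beta}_t(x) = t^{-\alpha}$ is constant on $(0,t)\supset(0,\xi)$. Once this inclusion is recorded, the left inequality is immediate from passing to a supremum over a larger class.

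For the right inequality, I would fix $f \in P_\xi^\beta\cap\Omega_{\alpha,\beta}$ and invoke Lemma~\ref{K approximate series} to obtain $\tilde f$ with $f\le \tilde f\le 2f$ and a sequence $\{h_n\}\subset L^+$ such that $K_\xi^{\alpha,\beta}h_n \uparrow \tilde f$ pointwise. The averaging operator $A\in\mathcal{A}$ is positive and commutes with pointwise increasing limits (by its explicit form, or equivalently by monotone convergence on each averaging interval), so $AK_\xi^{\alpha,\beta}h_n \uparrow A\tilde f$ pointwise. The monotone convergence theorem then yields
\[
\|K_\xi^{\alpha,\beta}h_n\|_{p,v} \uparrow \|\tilde f\|_{p,v}
\quad\text{and}\quad
\|AK_\xi^{\alpha,\beta}h_n\|_{q,u} \uparrow \|A\tilde f\|_{q,u}.
\]
Now I would apply Lemma~\ref{J.4} with $T=(\xi,\infty)$ (equipped with Lebesgue measure), the kernel $k^{\alpha,\beta}(x,t)$, and the operator $A$; this is legitimate because, as noted in the remark preceding Lemma~\ref{J.4}, every $A\in\mathcal{A}$ is formally self-adjoint and hence has a formal adjoint. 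Writing $C$ for the right-hand side of (\ref{aons}), Lemma~\ref{J.4} gives $\|AK_\xi^{\alpha,\beta}h_n\|_{q,u} \le C\|K_\xi^{\alpha,\beta}h_n\|_{p,v}$ for each $n$. Letting $n\to\infty$ produces $\|A\tilde f\|_{q,u} \le C\|\tilde f\|_{p,v}$, and then
\[
\|Af\|_{q,u} \le \|A\tilde f\|_{q,u} \le C\|\tilde f\|_{p,v} \le 2C\|f\|_{p,v},
\]
using $f\le \tilde f\le 2f$ and the positivity of $A$. Taking a supremum over $f$ gives the desired $2C$ bound.

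I expect no serious obstacle: the only points requiring any care are the verification that $A$ both preserves pointwise monotone limits (so that the MCT applies to $A\tilde f$) and has the formal adjoint needed to invoke Lemma~\ref{J.4}. The latter is precisely the technical improvement over \cite{japan} that Lemma~\ref{J.4} was designed to provide, and the former is a direct consequence of the averaging structure (\ref{ave}). Everything else is bookkeeping between the cone $P_\xi^\beta\cap\Omega_{\alpha,\beta}$, the range of $K_\xi^{\alpha,\beta}$, and the factor of $2$ arising from $\tilde f\le 2f$.
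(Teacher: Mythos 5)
Your proposal is correct and takes essentially the same route as the paper: both halves are handled identically, with the left inequality following from $k^{\alpha,\beta}_t\in P_\xi^\beta\cap\Omega_{\alpha,\beta}$ and the right from Lemma~\ref{J.4} (using the self-adjointness of $A$) combined with the approximation of Lemma~\ref{K approximate series}, monotone convergence, and the factor $2$ from $\tilde f\le 2f$. The only nit is notational: your constant $C$ should be the supremum $\sup_{t>\xi}\|Ak^{\alpha,\beta}_t\|_{q,u}/\|k^{\alpha,\beta}_t\|_{p,v}$ appearing on the right of (\ref{J.4.h}), not the right-hand side of (\ref{aons}), which already carries the factor $2$; your closing line ``gives the desired $2C$ bound'' makes clear this is what you intended.
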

\begin{proof} The definition, $k^{\alpha,\beta}_t(x)=\min(x^\beta t^{-\alpha},x^{-\alpha}t^\beta)$, ensures that if $t>\xi$ then $k_t^{\alpha,\beta}\in P_{\xi}^{\beta}\cap\Omega_{\alpha,\beta}$. This proves the first inequality of (\ref{aons}). For the other, let
\[
D= \sup_{t > \xi} \dfrac{\|A k^{\alpha,\beta}_t\|_{q,u}}{\|k^{\alpha,\beta}_t\|_{p,v}}.
\]
We apply Lemma \ref{J.4} taking $\mu$ and $\nu$ to be Lebesgue measure on $(0,\infty)$. Note that each $A\in \mathcal{A}$ is its own formal adjoint. Let $\lambda$ be Lebesgue measure on $(\xi,\infty)$, $K=K^{\alpha,\beta}_\xi$ and $k(x,t)=k^{\alpha,\beta}(x,t)$. The conclusion is that for all $h\in L^+$,
\[
\|AK_\xi^{\alpha,\beta}h\|_{q,u}\le D\|K_\xi^{\alpha,\beta}h\|_{p,v}.
\]
Now fix $f\in P_{\xi}^{\beta}\cap\Omega_{\alpha,\beta}$ and apply Lemma \ref{K approximate series} to get an increasing sequence $h_n\in L^+$ such that $f\le\lim_{n\to\infty}K_\xi^{\alpha,\beta}h_n\le 2f$. The averaging operator $A$  preserves order, so, by the Monotone Convergence Theorem,
\[
\|Af\|_{q,u}\le\lim_{n\to\infty}\|A K_\xi^{\alpha,\beta} h_n\|_{q,u}
\le D\lim_{n\to\infty}\|K_\xi^{\alpha,\beta} h_n\|_{p,v}\le2D\|f\|_{p,v}.
\]
This proves the second inequality of (\ref{aons}).
\end{proof}

One consequence of Proposition \ref{averaging operator norm series} is the following extension of Theorem 1 in \cite{M2} (see also Theorem 3 in \cite{M1}) from the range $1\le p \le q <\infty$ to $0<p \le q <\infty$. It is possible to get such an extension directly from Maligranda's Theorem 1 beginning with the case $p=1$ and making the substitution $f\mapsto f^p$. By this method one obtains the constant $2^{1/p}$ in place of the smaller $2^{1/q}$ that appears below.
 
\begin{prop}\label{maligranda series} Suppose $\xi\ge0$, and $\alpha,\beta\in \mathbb{R}$ satisfy $\alpha+\beta>0$. If $0<p \le q <\infty$ and $u,v \in L^+$, then
\begin{equation}\label{eq maligranda series}
\sup_{z> \xi}  \dfrac{\| k^{\alpha ,\beta}_z \|_{q,u}}{\|k^{\alpha,\beta}_z \|_{p,v}}\le\sup_{f\in P_{\xi}^{\beta}\cap\Omega_{\alpha,\beta}} \dfrac{\| f \|_{q,u}}{\| f \|_{p,v}}  \le 2^{1/q} \sup_{z> \xi}  \dfrac{\| k^{\alpha ,\beta}_z \|_{q,u}}{\|k^{\alpha,\beta}_z \|_{p,v}}
\end{equation}
\end{prop}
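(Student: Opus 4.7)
The first inequality in (\ref{eq maligranda series}) is immediate, since for each $z>\xi$ the kernel $k^{\alpha,\beta}_z$ itself belongs to $P_\xi^\beta\cap\Omega_{\alpha,\beta}$ and so appears in the family over which the middle supremum is taken.

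The plan for the second inequality is to reduce the general range $0<p\le q<\infty$ to the range $0<p'\le 1\le q'<\infty$ already covered by Proposition \ref{averaging operator norm series}, applied with the identity operator in the role of $A$. The identity lies in $\mathcal{A}$ by taking the family of intervals in (\ref{ave}) to be empty. The reduction is effected by the power substitution $g=f^q$: since $f\mapsto f^q$ preserves nonnegativity and monotonicity, it is a bijection of $P_\xi^\beta\cap\Omega_{\alpha,\beta}$ onto $P_\xi^{q\beta}\cap\Omega_{q\alpha,q\beta}$ (note $q\alpha+q\beta>0$), and the kernels transform as $k^{q\alpha,q\beta}_z=(k^{\alpha,\beta}_z)^q$. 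A routine calculation yields
\[
\|g\|_{1,u}=\|f\|_{q,u}^q,\qquad \|g\|_{p/q,v}=\|f\|_{p,v}^q,
\]
together with the same identities with $f,g$ replaced by $k^{\alpha,\beta}_z,\ k^{q\alpha,q\beta}_z$.

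With these preparations I would apply Proposition \ref{averaging operator norm series} with $\alpha,\beta,p,q$ replaced by $q\alpha,q\beta,p/q,1$ (and $A$ the identity), observing that $p/q\le 1$ places us in the admissible range. This yields
\[
\sup_{g\in P_\xi^{q\beta}\cap\Omega_{q\alpha,q\beta}}\frac{\|g\|_{1,u}}{\|g\|_{p/q,v}}\le 2\sup_{z>\xi}\frac{\|k^{q\alpha,q\beta}_z\|_{1,u}}{\|k^{q\alpha,q\beta}_z\|_{p/q,v}}.
\]
Substituting the identities above converts each side into the $q$-th power of the corresponding ratio in the original parameters, so extracting $q$-th roots turns the constant $2$ into $2^{1/q}$ and produces the desired inequality.

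I do not anticipate any real obstacle. The argument is essentially a change of variable in the exponent, and every step is either a direct appeal to Proposition \ref{averaging operator norm series} or elementary algebra with exponents. The only point worth a single line of justification in the write-up is that the identity map genuinely belongs to $\mathcal{A}$, which is immediate from the empty-family case of (\ref{ave}).
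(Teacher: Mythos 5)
Your proposal is correct and follows essentially the same route as the paper: the substitution $g=f^q$, the identities $\|f^q\|_{1,u}=\|f\|_{q,u}^q$, $\|f^q\|_{p/q,v}=\|f\|_{p,v}^q$ and $(k_t^{\alpha,\beta})^q=k_t^{q\alpha,q\beta}$, and then Proposition \ref{averaging operator norm series} with indices $p/q$ and $1$ and $A$ the identity, which turns the constant $2$ into $2^{1/q}$. No gaps.
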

\begin{proof} Taking $g=f^q$, it is routine to verify that,
\[
\sup_{f\in P_{\xi}^{\beta}\cap\Omega_{\alpha,\beta}} \dfrac{\| f \|_{q,u}}{\| f \|_{p,v}} =\bigg(\sup_{g\in P_{\xi}^{q\beta}\cap\Omega_{q\alpha,q\beta}} \dfrac{\| g \|_{1,u}}{\| g \|_{p/q,v}}\bigg)^{1/q}.
\]
But, $(k_t^{\alpha,\beta}(x))^q=k_t^{q\alpha,q\beta}(x)$, so we also have,
\[
\sup_{t> \xi}  \dfrac{\| k^{\alpha ,\beta}_t \|_{q,u}}{\|k^{\alpha,\beta}_t \|_{p,v}}=\bigg( \sup_{t> \xi}  \dfrac{\| k^{q\alpha ,q\beta}_t \|_{1,u}}{\|k^{q\alpha,q\beta}_t \|_{p/q,v}}\bigg)^{1/q}.
\]
The result now follows from Proposition \ref{averaging operator norm series}, with indices $p/q$ and $1$, by taking $A$ to be the identity.
\end{proof} 

We end this section by stating the special cases of Propositions \ref{averaging operator norm series} and \ref{maligranda series} that will be used for our results in inequalities for Fourier series. Recall that $P=P_1^0$ and $\omega_z(x)=\min(z^{-2},x^{-2})=k^{2,0}_z(x)$.

\begin{cor}\label{averaging operator norm series fourier}  Let $0<p\le 2\le q < \infty$ and $u,v\in L^+$. If $A\in\mathcal{A}$, then 
\[
\sup_{z >1} \dfrac{\|A\omega_z\|_{q/2,u}}{\|\omega_z\|_{p/2,v}} \le\sup_{f \in  P\cap\Omega_{2,0}} \dfrac{\|Af\|_{q/2,u}}{\|f\|_{p/2,v}} \le 2\sup_{z >1} \dfrac{\|A\omega_z\|_{q/2,u}}{\|\omega_z\|_{p/2,v}}.
\]
\end{cor}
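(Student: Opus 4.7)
The plan is to obtain the corollary as a direct specialization of Proposition \ref{averaging operator norm series}. I would first identify the substitution of parameters: take $\xi=1$, $\alpha=2$, $\beta=0$, and replace the index pair $(p,q)$ in the proposition by $(p/2,q/2)$.

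Next I would verify that each hypothesis of the proposition translates correctly under this substitution. The requirement $\alpha+\beta>0$ becomes $2+0=2>0$, which holds. The requirement $\xi\ge 0$ is immediate. The index condition $0<p/2\le 1\le q/2<\infty$ is precisely the hypothesis $0<p\le 2\le q<\infty$ of the corollary. The operator $A\in\mathcal{A}$ is common to both statements.

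Then I would check that the notation in the conclusion of the proposition specializes exactly to the notation in the corollary. With $\alpha=2$ and $\beta=0$, the kernel reduces to
\[
k^{2,0}(x,t)=\min(x^{0}t^{-2},x^{-2}t^{0})=\min(t^{-2},x^{-2}),
\]
so for each $z>0$ we have $k^{2,0}_z(x)=\omega_z(x)$ by the definition recalled before the statement. Similarly, $P=P^0_1$ by definition, and $\beta=0$, $\xi=1$ give $P_{\xi}^{\beta}\cap\Omega_{\alpha,\beta}=P_1^0\cap\Omega_{2,0}=P\cap\Omega_{2,0}$. Under these identifications, the double inequality (\ref{aons}) from Proposition \ref{averaging operator norm series} becomes exactly the double inequality asserted in the corollary. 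No further work is required; there is no genuinely hard step, since the content has already been carried out in the proof of Proposition \ref{averaging operator norm series}, and the corollary serves only to record the particular case that will be invoked for the Fourier series applications.
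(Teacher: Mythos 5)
Your proposal is correct and matches the paper exactly: the corollary is stated there as the special case of Proposition \ref{averaging operator norm series} with $\xi=1$, $\alpha=2$, $\beta=0$, and indices $p/2$, $q/2$, using the identifications $k^{2,0}_z=\omega_z$ and $P_1^0\cap\Omega_{2,0}=P\cap\Omega_{2,0}$. All the checks you carry out are the ones needed, and nothing further is required.
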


\begin{cor}\label{maligranda series fourier}  Let $0<p \le q <\infty$ and $u,v\in L^+$. Then 
\[
\sup_{z >1} \dfrac{\|\omega_z\|_{q/2,u}}{\|\omega_z\|_{p/2,v}} \le\sup_{f \in  P\cap\Omega_{2,0}} \dfrac{\|f\|_{q/2,u}}{\|f\|_{p/2,v}} \le 2^{2/q}\sup_{z >1} \dfrac{\|\omega_z\|_{q/2,u}}{\|\omega_z\|_{p/2,v}}. 
\]
\end{cor}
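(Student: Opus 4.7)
The plan is to obtain Corollary \ref{maligranda series fourier} as a direct specialization of Proposition \ref{maligranda series}. The corollary is really just a notational repackaging: the cone $P \cap \Omega_{2,0}$ and the kernel function $\omega_z$ have been introduced precisely as the $(\alpha,\beta,\xi) = (2,0,1)$ instance of the objects $P_\xi^\beta \cap \Omega_{\alpha,\beta}$ and $k^{\alpha,\beta}_z$ appearing in the proposition.

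First I would verify that the parameter choice $\xi = 1$, $\alpha = 2$, $\beta = 0$ satisfies the hypothesis $\alpha + \beta > 0$ of Proposition \ref{maligranda series}, since $\alpha + \beta = 2 > 0$. Next I would note that the index hypothesis $0 < p \le q < \infty$ of the corollary implies $0 < p/2 \le q/2 < \infty$, so Proposition \ref{maligranda series} may be applied with its indices $p$ and $q$ replaced by $p/2$ and $q/2$. Under these substitutions the definitions $P = P_1^0$, $\Omega_{2,0} = \Omega_{\alpha,\beta}$, and $\omega_z(x) = \min(z^{-2}, x^{-2}) = k^{2,0}_z(x)$ recalled immediately before the corollary identify every set and function in the two statements. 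Finally, the constant $2^{1/q}$ in Proposition \ref{maligranda series} becomes $2^{1/(q/2)} = 2^{2/q}$ after the index substitution, matching the constant in the corollary.

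There is no real obstacle here, since the substantive work has already been carried out in Proposition \ref{maligranda series} (which in turn rests on Proposition \ref{averaging operator norm series}, Lemma \ref{J.4}, and Lemma \ref{K approximate series}). The only point that deserves a brief mention in the write-up is the passage from the original indices $p$ and $q$ of Proposition \ref{maligranda series} to $p/2$ and $q/2$, so that the reader sees clearly why the halved indices in the corollary are natural and where the exponent $2/q$ in the constant comes from.
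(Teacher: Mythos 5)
Your proposal is correct and is exactly what the paper intends: the corollary is stated as the special case $(\alpha,\beta,\xi)=(2,0,1)$ of Proposition \ref{maligranda series} with indices $p/2$ and $q/2$, and the paper gives no further proof beyond recalling $P=P_1^0$ and $\omega_z=k^{2,0}_z$. Your check of the hypothesis $\alpha+\beta>0$ and the conversion of the constant $2^{1/q}$ into $2^{2/q}$ covers everything that needs to be said.
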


\section{Necessary Conditions}\label{NC}

Here we construct the test functions that produce our necessary condition for the Fourier inequality 
\begin{equation}\label{GammaLambda}
\|\hat f\|_{\Lambda_q(u)}\le C\|f\|_{\Gamma_p(w)},\quad f\in L^1(\mathbb{T}).
\end{equation}
The condition we obtain is automatically necessary for the stronger inequality (\ref{GammaGamma}) as well. The method is similar to the construction given for the Fourier transform in \cite{FTLS}, in that one test function is constructed for each averaging operator in the class $\mathcal A$, see (\ref{ave}), and each value of a positive real parameter $z$. The details of construction in the Fourier series case are quite different, however, because of the finite measure on $\mathbb{T}$ and the atomic measure on $\mathbb{Z}$.

The idea is to take advantage of the large class of functions $g$ whose rearrangements coincide with $f^*$, for a given $f$. It turns out that there is enough freedom within this class to ensure that the rearrangement $\hat g^*$, of the Fourier series of $g$, possesses the properties we require. The first four lemmas are needed to give the main construction in Lemma \ref{test fun z>1}. The general necessary condition is proved in Theorem \ref{necessary series}.

Throughout this section we use $\mu$ to denote counting measure on $\mathbb{Z}$. 

The Fourier series of the characteristic function of an interval is easy to calculate. The first lemma gives an estimate of its rearrangement. 
\begin{lem}\label{1/z fourier star est}
Suppose $z \ge 3$ and let $f(x)=\chi_{(0,1/z)}(x)$, viewed as a  function on $\mathbb{T}$. Then $\hat{f}^*(y) \ge 1/(3\pi y+9\pi z)$.
\end{lem}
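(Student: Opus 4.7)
The plan is to compute the Fourier coefficients $\hat f(n)$ in closed form, identify many indices at which $|\hat f(n)|$ is large, and convert this information into a pointwise lower bound on the decreasing rearrangement $\hat f^*$.

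First, direct integration gives $\hat f(0)=1/z$ and, for $n\neq 0$, $\hat f(n)=(1-e^{-2\pi in/z})/(2\pi in)$, so $|\hat f(n)|=|\sin(\pi n/z)|/(\pi|n|)$. The zeros of the sine occur exactly at integer multiples of $z$, so the natural place to look for large values of $|\hat f|$ is near the midpoints of the gaps between consecutive multiples of $z$.

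Next I would localize the ``good'' indices. On each interval $[kz+z/6,\,kz+5z/6]$ (length $2z/3$) the estimate $|\sin(\pi n/z)|\ge\sin(\pi/6)=1/2$ holds, and because $z\ge 3$ this interval contains at least $\lfloor 2z/3\rfloor\ge (2z-3)/3$ integers. Fix a positive integer $M$. Taking $k=0,1,\dots,M-1$, the corresponding intervals lie in $[0,Mz]$, so each integer $n$ they contain satisfies $|n|\le Mz$ and $|\hat f(n)|\ge 1/(2\pi Mz)$. Doubling by the symmetry $|\hat f(-n)|=|\hat f(n)|$ and adjoining $n=0$ (for which $|\hat f(0)|=1/z\ge 1/(2\pi Mz)$) produces at least $1+2M(2z-3)/3$ indices where $|\hat f|\ge 1/(2\pi Mz)$. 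By the definition of the rearrangement, this forces
\[
\hat f^*(y)\ge \frac1{2\pi Mz}\qquad\text{whenever}\qquad y\le\tfrac{2M(2z-3)}{3}.
\]

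For an arbitrary $y\ge 0$ I would then take $M$ to be the smallest positive integer with $M\ge 3y/(2(2z-3))$, so that $M\le 3y/(2(2z-3))+1$ and hence $2\pi Mz\le 6\pi yz/(4z-6)+2\pi z$. The hypothesis $z\ge 3$ gives $4z-6\ge 2z$, so $6\pi yz/(4z-6)\le 3\pi y$, and obviously $2\pi z\le 9\pi z$. Combining yields $\hat f^*(y)\ge 1/(3\pi y+9\pi z)$, as claimed. The one place that requires care is the integer counting in the third step: checking that each interval of length $2z/3$ really does contain $\lfloor 2z/3\rfloor\ge (2z-3)/3$ integers for $z\ge 3$, and that the positive and negative ranges of $n$ give genuinely disjoint contributions. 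Once that bookkeeping is in place the rest is a straightforward optimization in $M$ together with the threshold condition $z\ge 3$.
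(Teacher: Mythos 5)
Your proof is correct and follows essentially the same route as the paper: compute $|\hat f(n)|=|\sin(\pi n/z)|/(\pi|n|)$, count the integers lying near the peaks of the sine within the first $M$ periods to bound the distribution function of $\hat f$ from below, and invert to get the rearrangement estimate. The only difference is bookkeeping — the paper works with a continuous threshold $\alpha$ and intervals of $\alpha$-dependent length obtained from a convexity bound for the sine, whereas you fix the level $1/(2\pi Mz)$, use the cruder bound $|\sin|\ge 1/2$ on the middle two-thirds of each period, and optimize over the integer $M$ at the end; both yield the same constants.
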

\begin{proof}
The Fourier coefficients of $f$ may be computed directly. If $k\ne0$, then
\[
\hat f(k)=\int_0^1 e^{-2\pi ikx}f(x)\,dx=e^{-ik\pi/z}\frac{\sin(k\pi/z)}{k\pi}.
\] 
For $\alpha>0$, let 
\[
E_\alpha=\{k\in \mathbb{Z}: |\hat{f}(k)| > \alpha \}
\supseteq\{k\in \mathbb{Z}\setminus\{0\} : |\sin(k\pi/z)| > \alpha|k|\pi\}.
\] To estimate $\mu(E_\alpha)$, the number of elements in $E_\alpha$, note that any real interval of length $L$ contains at least $L-1$ integers. We will also make use of the following simple estimate based on the convexity of the sine function: If $n$ is an integer and $|x-(2n-1)|\le1$, then $|\sin(\pi x/2)|\ge1-|x-(2n-1)|$.

Let $N$ be the greatest integer less than $1/(\alpha \pi z)$, fix a positive integer $n\le N$ and suppose $k$ is an integer in the open interval of length $z(1-\alpha\pi zn)$ centred at $(z/2)(2n-1)$. Then,
\[
|k-(z/2)(2n-1)|<(z/2)(1-\alpha\pi zn)<z/2
\]
and hence $(n-1)z<k<nz$. Also, 
\[
|2k/z-(2n-1)|<1-\alpha\pi zn\le1,
\]
so the sine function estimate gives,
\[
|\sin(k\pi/z)|\ge1-|2k/z-(2n-1)|>\alpha\pi zn>\alpha\pi k.
\]
If follows that $k\in ((n-1)z,nz)\cap E_\alpha$. Thus, there are at least $z(1-\alpha\pi zn)-1$ positive integers in $((n-1)z,nz)\cap E_\alpha$. Summing these from $n=1\dots N$ shows that $\mu((0,\infty)\cap E_\alpha)$ is not less than,
\[
(z-1)N-z^2\alpha \pi N(N+1)/2
\ge (z-1)\left(\frac1{\alpha\pi z}-1\right)-\frac z2\left(\frac1{\alpha\pi z}+1\right).
\]
Evidently, $k\in E_\alpha$ if and only if $-k\in E_\alpha$. So, using $z\ge3$,
\[
\mu(E_\alpha)\ge2\mu((0,\infty)\cap E_\alpha)\ge\frac{1-2/z}{\alpha\pi}+2-3z
\ge\frac1{3\alpha\pi}-3z.
\]

The definition of the rearrangement ensures that when $\alpha=\hat f^*(y)$, $\mu(E_\alpha)\le y$, so
\[
\frac1{3\hat f^*(y)\pi}-3z\le y
\]
and we have
\[
\hat f^*(y)\ge\frac1{3\pi y+9\pi z}.
\]
\end{proof}

The rearrangement of a characteristic function depends on the measure of the underlying set, but not on its geometry. On the other hand, the Fourier series of a characteristic function is profoundly affected by the geometry of the underlying set. Here we take advantage of this fact to get a dilation-like result that behaves oppositely to what we expect from a Fourier dilation. 
\begin{lem}\label{dilation star est} Let $k$ be a positive integer and $z>1$. Let $f(x)=\chi_{[0,1/(kz))}(x)$. Then for any $\varepsilon >0$ there exists a function $g\in L^1(\mathbb{T})$ such that
\[
g^*(s) = f^*(s/k) \quad and \quad  \hat{g}^*(y) \ge \hat{f}^*(y/k) - \varepsilon
\]
for $0\le s <1$ and $y>0$.
\end{lem}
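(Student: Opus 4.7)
The plan is to take $g$ as a superposition of $k$ modulated translates of $f$ whose supports are pairwise disjoint in $\mathbb{T}$ and whose Fourier coefficients are ``spread out'' along $\mathbb{Z}$. Choose shifts $x_j=j/k$ for $j=0,\dots,k-1$ and modulations $n_j=jM$ with $M$ a large positive integer to be fixed. Put
\[
g(x)=\sum_{j=0}^{k-1}e^{2\pi in_jx}f(x-x_j),
\]
which is bounded and hence in $L^1(\mathbb{T})$. Since $1/(kz)<1/k$, the intervals $[x_j,x_j+1/(kz))$ are pairwise disjoint, so at each $x\in\mathbb{T}$ at most one summand is nonzero and $|g|=\chi_E$ for $E=\bigcup_j[x_j,x_j+1/(kz))$, a set of measure $1/z$. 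Hence $g^*(s)=\chi_{[0,1/z)}(s)=f^*(s/k)$ for $0\le s<1$, giving the first identity.

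For the Fourier side, (\ref{Fourier translation}) gives
\[
\hat g(n)=\sum_{j=0}^{k-1}e^{-2\pi i(n-n_j)x_j}\hat f(n-n_j).
\]
Using the elementary bound $|\hat f(m)|\le 1/(|m|\pi)$ for $m\ne 0$, let $j^*=j^*(n)$ be the unique index with $n-j^*M\in[-M/2,M/2)$. For $i\ne j^*$ we have $|n-iM|\ge(|j^*-i|-1/2)M$, so
\[
\sum_{i\ne j^*}|\hat f(n-iM)|\le\frac{C_k}{M},\qquad C_k:=\frac{4}{\pi}\sum_{d=1}^{k-1}\frac{1}{2d-1}.
\]
Choose $M$ large enough that $C_k/M<\varepsilon$ and that $\{m\in\mathbb{Z}:|\hat f(m)|>\varepsilon\}\subset(-M/2,M/2)$; the second requirement is possible because $\hat f(m)\to 0$. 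Then for every $n\in\mathbb{Z}$,
\[
|\hat g(n)|\ge|\hat f(n-n_{j^*})|-\varepsilon.
\]

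To obtain the rearrangement bound, fix $y>0$. If $\hat f^*(y/k)\le\varepsilon$ the inequality is trivial since $\hat g^*(y)\ge 0$. Otherwise pick any $\lambda<\hat f^*(y/k)-\varepsilon$; the definition of $\hat f^*$ forces $\mu_{\hat f}(\lambda+\varepsilon)>y/k$. Every $m$ with $|\hat f(m)|>\lambda+\varepsilon\ge\varepsilon$ satisfies $|m|<M/2$, so the integers $n=jM+m$ with $0\le j<k$ and $m$ as above are all distinct, number $k\,\mu_{\hat f}(\lambda+\varepsilon)>y$, and by the pointwise bound satisfy $|\hat g(n)|>\lambda$. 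Therefore $\mu_{\hat g}(\lambda)>y$, which gives $\hat g^*(y)\ge\lambda$; letting $\lambda\uparrow\hat f^*(y/k)-\varepsilon$ completes the proof. The main delicacy is the simultaneous choice of $M$: it must be large enough to push the cross-term error below $\varepsilon$ and also to confine the superlevel sets $\{|\hat f|>\lambda+\varepsilon\}$ inside a single window, both of which are handled via the decay $|\hat f(m)|\le 1/(|m|\pi)$.
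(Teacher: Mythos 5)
Your proof is correct and follows essentially the same route as the paper: $g$ is a sum of $k$ modulated translates of $f$ with disjoint supports and widely spaced modulations, the cross-terms are absorbed into $\varepsilon$ via the decay $|\hat f(m)|\le 1/(\pi|m|)$, and the distribution-function estimate $\mu_{\hat g}(\alpha)\ge k\,\mu_{\hat f}(\alpha+\varepsilon)$ is converted to the rearrangement inequality. The only (immaterial) differences are that you translate by $j/k$ rather than packing the supports contiguously into $[0,1/z)$, and you phrase the final step with an approximating level $\lambda$ instead of evaluating the distribution function at $\hat g^*(y)$.
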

\begin{proof}
We show that for a sufficiently large integer $M$,
\[
g(x) = \sum_{j=0}^{k-1} e^{2\pi ijMx} f(x-j/(kz))
\]
will be the desired function. First notice that the translates of $f$ in the sum above are supported on disjoint subsets of $[0,1)$. Thus,
\[
|g(x)| =  \sum_{j=0}^{k-1} |e^{2\pi ijMx} f(x-j/(kz))| = \sum_{j=0}^{k-1} f(x-j/(kz)) = \chi_{[0,1/z)}(x).
\]
Furthermore $|g|$ and $f$ are both decreasing so, for $0\le s<1$, 
\[
g^*(s) = |g(s)|=\chi_{[0,1/z)}(s)=f(s/k) = f^*(s/k).
\]
This shows that $g$ satisfies the first conclusion of the lemma no matter what $M$ is chosen.

To establish the second conclusion we make use of the properties (\ref{Fourier translation}) to get,
\[
\hat{g}(n) = \sum_{j=0}^{k-1} e^{-2\pi i(n-jM)j/(kz)} \hat{f}(n-jM).
\] 
Fix $\varepsilon>0$ and choose $M=2k/(\pi\varepsilon)$. For all $n$ satisfying $|n| > M/2$,
\[
|\hat{f}(n)| = \Big|\frac{-e^{in\pi/(kz)}}{n\pi}\sin(n\pi/(kz))\Big|\le \frac{1}{n\pi}<\frac{\varepsilon}{k}.
\]
So if $|n-jM|<M/2$ for some $j$ then, for every $l\neq j$, $|n-lM| > M/2$ and hence $|\hat f(n-lM)|<\varepsilon/k$. It follows that if $|n-jM|<M/2$, then
\[
|\hat{g}(n)|\ge |\hat{f}(n-jM)| - (k-1)\varepsilon/k
\ge |\hat{f}(n-jM)| - \varepsilon.
\]

Now we can estimate the distribution function of $\hat{g}$. For $\alpha >0$,
\begin{align*}
\mu_{\hat{g}}(\alpha) 
& = \mu \{n\in \mathbb{Z}: |\hat{g}(n)| >\alpha \}
\\& \ge \sum_{j=0}^{k-1} \mu \{n\in (jM-M/2, jM+M/2) : |\hat{g}(n)| >\alpha \}
\\&  \ge \sum_{j=0}^{k-1} \mu \{n\in (jM-M/2, jM+M/2) : |\hat{f}(n-jM)| - \varepsilon >\alpha \}
\\& = k \mu \{n\in (-M/2, M/2) : |\hat{f}(n)| >\alpha + \varepsilon \}
\end{align*}

Since $|\hat{f}(n)| < \varepsilon$ when $n\notin (-M/2,M/2)$, 
\[
\mu_{\hat{g}}(\alpha) 
\ge k \mu \{n\in \mathbb{Z}: |\hat{f}(n)| >\alpha + \varepsilon \}
= k\mu_{\hat{f}}(\alpha+\varepsilon).
\]
Now for $y>0$,
\[
y \ge \mu_{\hat{g}}(\hat{g}^*(y)) \ge k\mu_{\hat{f}} (\hat{g}^*(y)+\varepsilon)  
\]
and hence
\[
\hat{f}^*(y/k) \le \hat{f}^*\big(\mu_{\hat f} \big(\hat{g}(y)+\varepsilon \big)\big) \le \hat{g}^*(y)+ \varepsilon,
\]
as required.
\end{proof} 

The last two results combine to give a useful one-parameter family of estimates for the rearrangement of the Fourier series of a characteristic function with an underlying set of fixed measure.
\begin{lem}\label{1/z fourier star epsilon est} For $z\ge 3$, $r>0$ and $\varepsilon >0$ there exists a function $g\in L^1(\mathbb{T})$ such that 
\[
g^* = \chi_{[0,1/z)} \quad and \quad  \hat{g}^*(y) \ge \dfrac{1}{3\pi y/r + 9\pi (r+1)z }-\varepsilon
\]
\end{lem}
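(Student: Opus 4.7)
The plan is to chain Lemma \ref{1/z fourier star est} with Lemma \ref{dilation star est}, choosing the dilation parameter $k$ to be an integer approximating the real parameter $r$. Concretely, I will set $k=\lceil r\rceil$, which is a positive integer satisfying $r\le k\le r+1$ (the lower bound is the ceiling property, and the upper bound holds because $\lceil r\rceil<r+1$ unless $r\in\mathbb{Z}$, in which case $\lceil r\rceil=r\le r+1$).

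With $k$ fixed, let $f=\chi_{[0,1/(kz))}$. Since $k\ge1$ and $z\ge3$, we have $kz\ge3$, so Lemma \ref{1/z fourier star est} applied to $f$ (with $z$ there replaced by $kz$) yields $\hat f^*(y)\ge1/(3\pi y+9\pi kz)$. Next, Lemma \ref{dilation star est} produces a $g\in L^1(\mathbb{T})$ satisfying $g^*(s)=f^*(s/k)$ for $0\le s<1$ and $\hat g^*(y)\ge\hat f^*(y/k)-\varepsilon$ for $y>0$. Computing $f^*(s/k)=\chi_{[0,1/(kz))}(s/k)=\chi_{[0,1/z)}(s)$ for $s\in[0,1)$, and noting that both $g^*$ and $\chi_{[0,1/z)}$ vanish on $[1,\infty)$ (since $\mu(\mathbb{T})=1$), gives the first conclusion $g^*=\chi_{[0,1/z)}$.

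For the second conclusion, combining the two estimates gives
\[
\hat g^*(y)\ge\frac{1}{3\pi(y/k)+9\pi kz}-\varepsilon=\frac{1}{3\pi y/k+9\pi kz}-\varepsilon,
\]
and the inequalities $k\ge r$ and $k\le r+1$ yield $3\pi y/k\le3\pi y/r$ and $9\pi kz\le9\pi(r+1)z$, so the denominator is bounded above by $3\pi y/r+9\pi(r+1)z$ and the desired bound follows. No step is genuinely difficult here, since the two preceding lemmas do the real work; the only point that required care is choosing an integer $k$ that is simultaneously $\ge r$ (to shrink $y/k$) and $\le r+1$ (to control $kz$), and verifying that $\lceil r\rceil$ meets both demands together with the hypothesis $kz\ge3$ needed to invoke Lemma \ref{1/z fourier star est}.
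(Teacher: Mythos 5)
Your proposal is correct and follows essentially the same route as the paper: both choose $k$ to be the integer in $[r,r+1)$ (i.e.\ $\lceil r\rceil$), apply Lemma \ref{dilation star est} to $f=\chi_{[0,1/(kz))}$, and then invoke Lemma \ref{1/z fourier star est} together with $r\le k\le r+1$ to replace $k$ by $r$ and $r+1$ in the denominator. The extra checks you record (that $kz\ge 3$ and that $g^*$ and $\chi_{[0,1/z)}$ agree off $[0,1)$) are sound and merely make explicit what the paper leaves implicit.
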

\begin{proof}
Let $k$ be the integer satisfying $r\le k < r+1$ and let $f = \chi_{[0,1/kz)}$. Then by Lemma \ref{dilation star est} there exists a $g$ such that,
\[
g^*(s) = f^*(s/k) = \chi_{[0,1/z)} \quad and \quad \hat{g}^*(y) \ge \hat{f}^*(y/k) - \varepsilon
\]
Lemma \ref{1/z fourier star est} yields 
\[
\hat{g}^*(y) \ge  \dfrac{1}{3\pi y/k + 9\pi kz}-\varepsilon \ge \dfrac{1}{3\pi y/r + 9\pi (r+1)z } -\varepsilon
\]
\end{proof} 

The next lemma is a variation of the construction in Lemma \ref{dilation star est}. This time the characteristic function is subdivided into infinitely many parts of differing sizes. The increase in generality is balanced by the coarser estimate obtained for the rearrangement of the Fourier series. 
\begin{lem}\label{seq fourier star est} Let $\{p_j\}$ be a sequence of non-negative real numbers satisfying $\sum_{j=1}^{\infty} p_j = p_0 \le1$. For each $p_j$ let $f_j=\chi_{[0,p_j)}$ be a function on the unit circle. Then for any $\varepsilon>0$ there exists a function $g\in L^1(\mathbb{T})$ such that
\[
g^*=\chi_{[0,p_0)} \quad and \quad \hat{g}^*(y) \ge  \hat{f}_j^*(y) -\varepsilon, \quad j=1,2,\dots.
\]
\end{lem}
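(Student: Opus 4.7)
The plan is to build $g$ as an infinite analogue of the modulated-translate construction in Lemma~\ref{dilation star est}. Setting $s_j = \sum_{k<j} p_k$, I propose
\[
g(x) = \sum_{j=1}^{\infty} e^{2\pi i M_j x} f_j(x - s_j),
\]
where the integer frequencies $M_j$ will be chosen to grow very rapidly. The translates $f_j(\cdot - s_j)$ have pairwise disjoint supports tiling $[0, p_0)$, and each modulation has modulus one, so $|g(x)| = \chi_{[0, p_0)}(x)$ pointwise; this immediately gives $g \in L^\infty(\mathbb T) \subseteq L^1(\mathbb T)$ and $g^* = \chi_{[0, p_0)}$.

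For the Fourier side, expanding yields
\[
\hat g(n) = \sum_{j=1}^\infty e^{-2\pi i(n - M_j) s_j} \hat f_j(n - M_j).
\]
I will select the $M_j$ inductively so that, inside a window $W_j = [M_j - R, M_j + R]$ of radius $R = \lceil 1/(\pi\varepsilon)\rceil$ around each $M_j$, the $j$-th term dominates the sum to within $\varepsilon$. The tool is the elementary estimate $|\hat f_k(m)| \le 1/(\pi|m|)$ for $m \ne 0$: if I arrange $|M_j - M_k| > R + 2^{k+1}/(\pi\varepsilon)$ for all $j \ne k$ (e.g.\ by taking $M_{k+1} - M_k > R + 2^{k+2}/(\pi\varepsilon)$ at each step), then for $n \in W_j$ and $k \ne j$ one has $|n - M_k| > 2^{k+1}/(\pi\varepsilon)$, hence $|\hat f_k(n - M_k)| < \varepsilon/2^{k+1}$. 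Summing over $k \ne j$ gives the desired pointwise bound $|\hat g(n)| \ge |\hat f_j(n - M_j)| - \varepsilon$ on each $W_j$.

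To pass from this to the rearrangement inequality, I compare distribution functions. The decay estimate forces every $m$ with $|\hat f_j(m)| > \beta + \varepsilon$ to satisfy $|m| < R$, so $m + M_j \in W_j$ and the pointwise bound gives $|\hat g(m + M_j)| > \beta$. Consequently $\mu_{\hat g}(\beta) \ge \mu_{\hat f_j}(\beta + \varepsilon)$ for every $\beta \ge 0$, and the standard infimum definition of $(\cdot)^*$ then yields $\hat g^*(y) \ge \hat f_j^*(y) - \varepsilon$ for every $y > 0$ (the case $\hat f_j^*(y) \le \varepsilon$ is trivial since $\hat g^* \ge 0$).

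The main technical obstacle is uniformity in $j$: a single sequence $\{M_j\}$ must separate each $M_j$ from \emph{all} other $M_k$'s well enough to hold the total cross-interference under $\varepsilon$. This is why the per-$k$ budget $\varepsilon/2^{k+1}$ is used rather than a uniform one; the resulting constraint depends only on $k$ itself and is readily arranged by induction on $k$.
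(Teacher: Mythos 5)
Your proposal is correct and follows essentially the same route as the paper's proof: the same modulated-translate sum $\sum_j e^{2\pi i M_j x}f_j(x-X_j)$, the same $1/(\pi|m|)$ decay bound with a per-index budget $\varepsilon/2^{k}$-type allocation to control cross-interference (the paper uses disjoint windows $I_j$ of radius $2^j/(\pi\varepsilon)$ rather than your fixed-radius windows plus a separation condition, but the effect is identical), and the same passage from $\mu_{\hat g}(\beta)\ge\mu_{\hat f_j}(\beta+\varepsilon)$ to the rearrangement inequality.
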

\begin{proof}
Let $X_1=0$ and $X_j=\sum_{l=1}^{j-1} p_l$ for $j \ge 2$. Define $g$ by,
\[
g(x) = \sum_{j=1}^{\infty} e^{2\pi iM_j x} f_j(x-X_j),
\]
where the $M_j$, $j=1,2,\dots$ are to be chosen later. The definitions of $X_1,X_2,\dots$ ensure that the translates $f_j(x-X_j)$ have disjoint supports, and that,
\[
|g(x)| = \sum_{j=1}^{\infty} |e^{2\pi iM_j x} f_j(x-X_j) |= \chi_{[0,p_0)}.
\]
Since $|g|$ is decreasing, $g^* = |g|=\chi_{[0,p_0)}$, the first conclusion of the lemma.

The Fourier coefficients of $g$ are given by,
\[
\hat{g}(n) = \sum_{j=1}^{\infty} e^{-2\pi i(n-M_j)X_j} \hat{f_j}(n-M_j).
\]
(Note that since the series defining $g$ converges in $L^1(\mathbb{T})$, the series defining $\hat g$ converges in $L^\infty(\mathbb{Z})$ and hence pointwise.) We choose $M_1,M_2,\dots$ so that the intervals, $I_j=(M_j-2^j/(\pi\varepsilon),M_j+2^j/(\pi\varepsilon))$ are disjoint for $j=1,2,\dots$. This implies that if $n\in I_j$, then $n\not\in I_l$ for $l\ne j$ so,
\[
|\hat{f_l}(n-M_l)|=\left|\frac{\sin((n-M_l)\pi p_l)}{(n-M_l)\pi} \right|\le \frac{1}{|n-M_l|\pi}<\dfrac{\varepsilon}{2^l}.
\]
Thus, for $n\in I_j$,
\[
|\hat{g}(n)| \ge |\hat{f_j} (n-M_j)| - \sum_{l\neq j} \frac{\varepsilon}{2^l} \ge \hat{f_j}(n-M_j) - \varepsilon.
\]
For any $j$ and any $\alpha >0$,
\[
\mu_{\hat{g}}(\alpha)
\ge\mu \{n\in I_j: |\hat{g}(n)| >\alpha \}
\ge\mu \{n\in I_j: |\hat{f_j}(n-M_j)| >\alpha+\varepsilon \}.
\]
But $|\hat{f}_j(n-M_j)| < \varepsilon$ for $n\notin I_j$, so
\[
\mu_{\hat{g}}(\alpha)
\ge\mu \{n\in \mathbb{Z}: |\hat{f_j}(n-M_j)| >\alpha+\varepsilon \}
=\mu_{\hat{f_j}} (\alpha + \varepsilon).
\]
As in Lemma \ref{dilation star est}  this estimate for the distribution functions gives, $\hat{g}^*(y) \ge  \hat{f}_j^*(y) -\varepsilon$, the desired estimate for the rearrangements.
\end{proof} 

\begin{lem}\label{test fun z>1}
Let $z\ge 3$ and $A\in \mathcal{A}$. For each $\varepsilon >0$ there exists a function $f\in L^1(\mathbb{T})$ such that 
\[
f^* \le \chi_{[0,1/z)} \quad and \quad (A\omega_z)^{1/2} \le c_1 (\hat{f}^* + \varepsilon)
\]
with $c_1 = 183$.
\end{lem}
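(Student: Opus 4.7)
Plan: I would build $f$ as a modulated-and-translated sum of building blocks, each a function of the type produced by Lemma~\ref{dilation star est} (with Fourier bound from Lemma~\ref{1/z fourier star epsilon est}), with parameters tuned to the geometry of $A$. First I would analyze $A\omega_z$ case by case on each averaging interval $(a_i, b_i)$: since $\omega_z$ is decreasing and bounded by $1/z^2$, one obtains $(A\omega_z)^{1/2} \le 1/z$ everywhere, equal to $1/\sqrt{a_i b_i}$ on intervals with $a_i \ge z$, and to $\min(1/z, 1/x)$ outside every interval. Because $\hat f^*$ is decreasing, the pointwise inequality $(A\omega_z)^{1/2} \le c_1(\hat f^* + \varepsilon)$ reduces to three families of lower bounds on $\hat f^*$: roughly $\hat f^*(y) \ge 1/(Cz)$ at low $y$, $\hat f^*(y) \ge 1/(Cy)$ for large $y$ outside intervals, and $\hat f^*(b_i) \ge 1/(C\sqrt{a_i b_i})$ for each $i$ with $a_i \ge z$. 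Splitting intervals with $a_i \ge z$ into ``stretched'' ones ($b_i \ge 2 a_i$) and ``non-stretched'' ones ($b_i < 2 a_i$), the non-stretched case is served automatically once $\hat f^*(b_i) \ge 1/(Cb_i)$ holds, since $1/\sqrt{a_i b_i}$ and $1/b_i$ differ by a factor at most $\sqrt{2}$ there.

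For each stretched $i$ I would build a dedicated block $\tilde g_i$ by applying Lemma~\ref{dilation star est} to $\chi_{[0, 1/(k_i z_i))}$ with $k_i$ on the order of $\sqrt{b_i/a_i}$ and $z_i$ proportional to $a_i$, so that $\tilde g_i^* = \chi_{[0, 1/z_i)}$ and Lemma~\ref{1/z fourier star epsilon est} gives $\hat{\tilde g_i}^*(b_i) \ge 1/(C_0 \sqrt{a_i b_i}) - \varepsilon$ for a constant $C_0$ depending on the proportionality factor. I would also include a base block $g_0$ of the same type with $k = 1$ and support $1/(2z)$ to supply the low-$y$ and $1/y$ bounds. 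These are assembled as
\[
f(x) = e^{2\pi i N_0 x} g_0(x - Y_0) + \sum_i e^{2\pi i N_i x} \tilde g_i(x - Y_i),
\]
with shifts $Y_{\cdot}$ chosen so that the supports of the translated blocks are disjoint, and modulation frequencies $N_{\cdot}$ sparse enough that the Fourier coefficients of distinct blocks lie in disjoint bands, exactly as in the proofs of Lemmas~\ref{dilation star est} and~\ref{seq fourier star est}. Then $|f|$ is the indicator of a union of disjoint intervals of total measure $1/(2z) + \sum_i 1/z_i$, and by the same modulation-separation argument $\hat f^*(y)$ inherits (up to error $\varepsilon$) the Fourier lower bound of each block. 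The crucial sum is controlled because disjointness of the stretched intervals together with $b_i \ge 2 a_i$ forces $a_{i+1} \ge b_i \ge 2 a_i$; together with $a_1 \ge z$ this yields $\sum_i 1/a_i \le 2/z$, so taking $z_i = c a_i$ with $c \ge 4$ gives $\sum_i 1/z_i \le 1/(2z)$ and hence $|f| \le \chi_{[0, 1/z)}$.

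The main obstacle I anticipate is the careful parameter balancing: keeping $\sum_i 1/z_i$ within budget while simultaneously meeting all three families of Fourier lower-bound conditions forces a specific trade-off between the base block's coverage constants and the stretched blocks' contributions. Extracting the explicit $c_1 = 183$ requires tracking the constants $3\pi$ and $9\pi$ from Lemmas~\ref{1/z fourier star est} and~\ref{1/z fourier star epsilon est} through each step of the construction, together with the factor of $2$ from the dyadic spacing of stretched intervals and the split of the support budget between the base block and the per-interval blocks.
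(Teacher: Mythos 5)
Your overall strategy is the paper's: a base block handling the region where $A\omega_z\le 2\omega_z$, dedicated blocks for the ``stretched'' intervals built from Lemma \ref{1/z fourier star epsilon est} with $r\sim\sqrt{b_i/a_i}$, assembly by disjoint translates with well-separated modulation frequencies (this is exactly Lemma \ref{seq fourier star est}, which you may as well cite rather than re-derive), and the dyadic estimate $\sum_i 1/a_i\le 2/z$ to keep the total support under $1/z$. All of that is sound.

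However, there is a genuine gap: your case analysis omits the interval of $A$ that contains $z$, i.e.\ the interval $(a_0,b_0)$ with $a_0<z<b_0$. You only assign dedicated blocks to intervals with $a_i\ge z$, and you assume the remaining points are covered by the base block via $(A\omega_z)^{1/2}\le\min(1/z,C/y)$. But on the interval containing $z$ the average $A\omega_z$ is a \emph{constant} comparable to $1/(b_0z)$ (e.g.\ $A\omega_z\ge\frac1{b_0}\int_z^{b_0}t^{-2}\,dt\ge\frac1{2b_0z}$ when $b_0\ge 2z$), so at $y$ near $b_0$ you must produce $\hat f^*(y)\gtrsim 1/\sqrt{b_0z}$, whereas your base block of support $1/(2z)$ only yields $\hat f^*(y)\gtrsim 1/y\approx 1/b_0$, smaller by the unbounded factor $\sqrt{b_0/z}$. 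Concretely, take $A$ averaging over the single interval $(1,Y)$ with $Y$ large and $z=3$: then $(A\omega_3)^{1/2}(Y/2)\approx Y^{-1/2}$ while your $\hat f^*(Y/2)\lesssim Y^{-1}$, so no fixed $c_1$ works. The fix is the paper's second piece: one additional block for the interval containing $z$, built from Lemma \ref{1/z fourier star epsilon est} with $r\sim\sqrt{b_0/z}$, whose support must also be budgeted into the total measure $\le 1/z$. With that block added (and the support and constant bookkeeping redone accordingly — your parameter choices will not reproduce $c_1=183$ exactly, though any absolute constant would serve the paper's purposes), the argument goes through.
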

\begin{proof}
Fix $\varepsilon>0$ and let $\{(a_i,b_i)\}$ be the intervals associated with the averaging operator $A$. We will build the function $f$ in pieces and assemble them using Lemma \ref{seq fourier star est}. The first piece, $f_0=\chi_{[0,1/4z)}$, satisfies $f_0^*=\chi_{[0,1/4z)}$ so, by Lemma \ref{1/z fourier star est}, 
\[ 
\hat{f}_0^*(y) \ge (3\pi y + 9\pi(4z))^{-1}
\ge(39\pi \max(y,z))^{-1}=\omega_z(y)^{1/2}/(39\pi).
\]
If $y$ satisfies $A\omega_z(y)\le 2\omega_z(y)$, then
\begin{equation}\label{eq case 1}
A \omega_z(y)^{1/2} \le 39\sqrt{2}\pi\hat{f}_0^*(y)
\le c_1\hat{f}_0^*(y).
\end{equation}
The second piece is needed only when $z$ is contained in one of the intervals of $A$. If there is one, call it $(a_0,b_0)$. By Lemma \ref{1/z fourier star epsilon est}, with $r=\sqrt{b_0/(8z)}$ and $z$ replaced by $8z/3$, there exists a function $g_0$ such that $g_0^* = \chi_{[0,3/(8z))}$ and
\begin{align*}
\hat{g_0}^*(y)  + \varepsilon/2 
&\ge \left(3\pi y\sqrt{8z/b_0}+9\pi\left(\sqrt{b_0/(8z)}+1\right)(8z/3)\right)^{-1}\\
&=\left(6\sqrt2\pi(y/b_0)+6\sqrt2\pi+24\pi(z/b_0)^{1/2}\right)^{-1}(b_0z)^{-1/2}.
\end{align*}
If $y\in(a_0,b_0)$ then both $y/b_0$ and $z/b_0$ are less than 1 so,
\[
\hat{g_0}^*(y)  + \varepsilon/2 
\ge(12\sqrt2\pi+24\pi)^{-1}(b_0z)^{-1/2}.
\]
Also, if $y\in(a_0,b_0)$, the monotonicity of $\omega_z$ implies that,
\[
A\omega_z(y)=\frac1{b_0-a_0}\int_{a_0}^{b_0} \omega_z(t)\,dt
\le\frac1{b_0}\int_0^{b_0} \omega_z(t)\,dt
\le\frac1{b_0}\int_0^\infty\omega_z(t)\,dt=\frac2{b_0z}.
\]
Thus,
\begin{equation}\label{eq case 2}
A\omega_z(y)^{1/2}\le(12\sqrt2\pi+24\pi)\sqrt2(\hat{g_0}^*(y)  + \varepsilon/2)
\le c_1(\hat{g_0}^*(y)  + \varepsilon/2).
\end{equation}

The remaining pieces of $f$ are indexed by certain intervals of $A$. Let
\[
J = \{j\ne0: z\le a_j \le b_j /2 \}.
\]
For each $j\in J$, apply Lemma \ref{1/z fourier star epsilon est}, with $r=\sqrt{b_j/(16a_j)}$ and $z$ replaced by $16a_j/3$, to produce a function $g_j$ such that $g_j^* = \chi_{[0,3/(16a_j))}$ and
\begin{align*}
\hat{g_j}^*(y)  + \varepsilon/2 
&\ge \left(3\pi y\sqrt{16a_j/b_j}+9\pi\left(\sqrt{b_j/(16a_j)}+1\right)(16a_j/3)\right)^{-1}\\
&=\left(12\pi(y/b_j)+12\pi+24\sqrt2\pi(2a_j/b_j)^{1/2}\right)^{-1}(a_jb_j)^{-1/2}.
\end{align*} 
If $y\in(a_j,b_j)$ then both $y/b_j$ and $2a_j/b_j$ are less than 1 so,
\[
\hat{g_j}^*(y)  + \varepsilon/2 
\ge(24\pi+24\sqrt2\pi)^{-1}(a_jb_j)^{-1/2}.
\]
But for $y\in(a_j,b_j)$, $A\omega_z(y)=1/(a_jb_j)$ so
\begin{equation}\label{eq case 3}
A\omega_z(y)^{1/2}\le(24\pi+24\sqrt2\pi)(\hat{g_j}^*(y)  + \varepsilon/2)
\le c_1(\hat{g_j}^*(y)  + \varepsilon/2).
\end{equation}

To apply Lemma \ref{seq fourier star est} to the functions $f_0$, $g_0$, and $g_j$ for $j\in J$ we need to estimate the sums of the lengths of the intervals involved.
For each $j\in J$, let $m_j$ be the smallest integer such that $2^{m_j}z\le a_j$. Since $z\le a_j$, each $m_j\ge0$.  To see that $m_j\ne m_k$ for distinct $j,k\in J$, suppose $a_j\le a_k$. Since the intervals of $A$ are disjoint, $b_j\le a_k$ and we see that, $m_j>m_k$ because, $2^{m_j+1}z\le2a_j<b_j\le a_k$. Since the $m_j$ are all different,
\[
\sum_{j\in J}\frac1{a_j}\le\frac1z\sum_{j\in J}2^{-m_j}
\le\frac1z\sum_{m=0}^\infty2^{-m}=\frac2z.
\]
Therefore,
\[
\frac1{4z} + \frac3{8z} + \sum_{j \in J} \frac3{16a_j}\le \frac1z\le 1
\]
and Lemma \ref{seq fourier star est} guarantees the existence of a function $f$ such that,
\[
f^*\le \chi_{[0,1/z)},\ 
\hat{f}^* \ge  \hat{f_0}^* -\varepsilon /2,\  
\hat{f}^*\ge  \hat{g_0}^* -\varepsilon /2,\ 
\text{and }\hat{f}^* \ge  \hat{g_j}^* -\varepsilon /2\text{ for }j\in J.
\]
To see that  $(A\omega_z)^{1/2} \le c_1 (\hat{f}^* + \varepsilon)$, let $y>0$. If $A\omega_z(y)\le 2\omega_z(y)$, then (\ref{eq case 1}) shows that
\[
(A\omega_z)(y)^{1/2} \le c_1 (\hat{f_0}^*(y) + \varepsilon/2)
\le c_1 (\hat{f}^*(y) + \varepsilon).
\]
If $y$ and $z$ are in the same interval of $A$, then (\ref{eq case 2}) shows that
\[
(A\omega_z)(y)^{1/2} \le c_1 (\hat{g_0}^*(y) + \varepsilon/2)
\le c_1 (\hat{f}^*(y) + \varepsilon).
\]
Any other $y$ satisfies $A\omega_z(y)> 2\omega_z(y)$ and is not in an interval of $A$ with $z$. Since $A\omega_z(y)\ne \omega_z(y)$, $y$ is in some interval $(a_j,b_j)$ on which $\omega_z$ is not constant. Thus $z<b_j$. But $z$ is not in the interval that contains $y$ so $z\le a_j$. Therefore, 
\[
\frac1{a_jb_j}=A\omega_z(y)> 2\omega_z(y)=\frac2{y^2}\ge\frac2{b_j^2}
\]  
and we see that $a_j<b_j/2$ so $j\in J$. Now (\ref{eq case 3}) yields,
\[
(A\omega_z)(y)^{1/2} \le c_1 (\hat{g_j}^*(y) + \varepsilon/2)
\le c_1 (\hat{f}^*(y) + \varepsilon)
\]
to complete the proof.
\end{proof}

The restriction $z\ge3$ in the last lemma is a technical one and can be removed.
\begin{prop}\label{test fun all z}Let $z\ge1$ and $A\in \mathcal{A}$. For each $\varepsilon >0$ there exists a function $f\in L^1(\mathbb{T})$ such that 
\[
f^* \le \chi_{[0,1/z)} \quad and \quad (A\omega_z)^{1/2} \le c (\hat{f}^* + \varepsilon)
\]
with $c = 3 c_1 = 549$.
\end{prop}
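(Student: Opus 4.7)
The plan is to reduce the range $1\le z<3$ to the already established range $z\ge 3$ by a pointwise comparison between $\omega_z$ and $\omega_3$, while for $z\ge 3$ the proposition follows at once from Lemma \ref{test fun z>1}.

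The key observation I would use is the pointwise inequality $\omega_z\le 9\omega_3$ on $(0,\infty)$ whenever $1\le z\le 3$. For $x\le 3$ this holds because $\omega_3(x)=1/9$ while $\omega_z(x)\le z^{-2}\le 1$, and for $x>3$ one has $\omega_3(x)=x^{-2}\ge\omega_z(x)$. Since each $A\in\mathcal{A}$ is linear and positivity preserving, the comparison lifts to $A\omega_z\le 9A\omega_3$, and hence
\[
(A\omega_z)^{1/2}\le 3(A\omega_3)^{1/2}.
\]

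Given $z\ge 1$, $A\in\mathcal{A}$, and $\varepsilon>0$, I would split into two cases. If $z\ge 3$, Lemma \ref{test fun z>1} applied to $z$, $A$, $\varepsilon$ delivers $f\in L^1(\mathbb{T})$ with $f^*\le\chi_{[0,1/z)}$ and $(A\omega_z)^{1/2}\le c_1(\hat f^*+\varepsilon)\le c(\hat f^*+\varepsilon)$, and the proposition is proved. If $1\le z<3$, apply Lemma \ref{test fun z>1} to the value $3$ in place of $z$ (keeping the same $A$ and $\varepsilon$) to obtain $f\in L^1(\mathbb{T})$ such that $f^*\le\chi_{[0,1/3)}$ and $(A\omega_3)^{1/2}\le c_1(\hat f^*+\varepsilon)$. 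Because $z\le 3$, we have $1/3\le 1/z$ and therefore $\chi_{[0,1/3)}\le\chi_{[0,1/z)}$, which gives the first conclusion $f^*\le\chi_{[0,1/z)}$. Chaining the pointwise comparison with the bound from the lemma produces
\[
(A\omega_z)^{1/2}\le 3(A\omega_3)^{1/2}\le 3c_1(\hat f^*+\varepsilon)=c(\hat f^*+\varepsilon),
\]
which is the second conclusion.

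I do not anticipate any real obstacle: the entire content of the extension is the pointwise estimate $\omega_z\le 9\omega_3$ for $1\le z\le 3$, and the factor of $9$ there is exactly what turns $c_1$ into $c=3c_1=549$. The threshold $3$ plays no special role beyond being the range where Lemma \ref{test fun z>1} is available; choosing the threshold as small as possible keeps the comparison constant, and hence the final constant, as small as possible.
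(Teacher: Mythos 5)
Your proposal is correct and follows exactly the paper's own argument: for $z\ge 3$ invoke Lemma \ref{test fun z>1} directly, and for $1\le z<3$ apply the lemma with $z=3$, observe $\chi_{[0,1/3)}\le\chi_{[0,1/z)}$, and use the pointwise bound $\omega_z\le 9\omega_3$ (hence $A\omega_z\le 9A\omega_3$) to absorb the factor $3$ into $c=3c_1$. Your justification of the comparison $\omega_z\le 9\omega_3$ is slightly more detailed than the paper's, but the route is the same.
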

\begin{proof}
If $z\ge 3$ then Lemma \ref{test fun z>1} implies the existence of the desired function $f$, because $c_1\le c$.

If $1\le z<3$ then we set $z=3$ in Lemma \ref{test fun z>1} to get a function $f$ such that $f^* \le \chi_{[0,1/3)}$ and $(A\omega_3)^{1/2} \le c_1 (\hat{f}^* + \varepsilon)$. Clearly,  $f^* \le \chi_{[0,1/z)}$. We also have $\omega_z \le 9\omega_3$ which implies $A\omega_z \le 9 A\omega_3$ and completes the proof.
\end{proof} 

The main result of the section follows. It uses the test functions just constructed to give a necessary condition for the Fourier series inequality (\ref{GammaLambda}). As we will see in the next section, for a large range of indices, the condition is also sufficient.
\begin{thm}\label{necessary series} Suppose $0<p<\infty$, $0<q\le \infty$, and for some $C>0$, $u,w \in L^+$ satisfy
\begin{equation}\label{GammaLambda}
\|\hat f\|_{\Lambda_q(u)}\le C\|f\|_{\Gamma_p(w)},\quad f\in L^1(\mathbb{T}).
\end{equation}
 for all $f\in L^1(\mathbb{T})$. Then
\[
\sup_{z>1}\sup_{A \in \mathcal{A}}\dfrac{\| A\omega_z \|_{q/2,u}}{\| \omega_z \|_{p/2,v}} \le c^2C^2
\]
where $c$ is the constant in Proposition \ref{test fun all z}. Here $v(t)=t^{p-2}w(1/t)$.
\end{thm}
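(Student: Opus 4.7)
The plan is to test the hypothesis (\ref{GammaLambda}) against the family of functions constructed in Proposition \ref{test fun all z}. Fix $z > 1$, $A \in \mathcal{A}$, and $\varepsilon > 0$, and let $f_\varepsilon \in L^1(\mathbb{T})$ be the corresponding test function, satisfying $f_\varepsilon^* \le \chi_{[0,1/z)}$ and $(A\omega_z)^{1/2} \le c(\hat f_\varepsilon^* + \varepsilon)$ pointwise.

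The right-hand side of the hypothesis will be bounded by a direct calculation: the estimate $f_\varepsilon^* \le \chi_{[0,1/z)}$ gives $f_\varepsilon^{**}(t) \le \min(1, 1/(tz))$, so
\[
\|f_\varepsilon\|_{\Gamma_p(w)}^p \le \int_0^{1/z} w(t)\,dt + z^{-p}\int_{1/z}^\infty t^{-p} w(t)\,dt.
\]
The change of variable $s = 1/t$, together with the identities $v(s) = s^{p-2}w(1/s)$ and $\omega_z(s)^{p/2} = \min(z^{-p}, s^{-p})$, will turn this into $\int_0^\infty \omega_z(s)^{p/2} v(s)\,ds = \|\omega_z\|_{p/2,v}^{p/2}$, yielding $\|f_\varepsilon\|_{\Gamma_p(w)} \le \|\omega_z\|_{p/2,v}^{1/2}$.

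For the left-hand side, I will first truncate to a finite window $(0,N)$: the pointwise bound gives $(A\omega_z)^{1/2}\chi_{(0,N)} \le c\hat f_\varepsilon^* + c\varepsilon\chi_{(0,N)}$. Taking $\|\cdot\|_{q,u}$ and applying the triangle inequality (for $1 \le q \le \infty$) or the $q$-subadditivity $\|h_1+h_2\|_{q,u}^q \le \|h_1\|_{q,u}^q + \|h_2\|_{q,u}^q$ (for $0<q<1$), combined with the hypothesis $\|\hat f_\varepsilon\|_{\Lambda_q(u)} \le C\|f_\varepsilon\|_{\Gamma_p(w)} \le C\|\omega_z\|_{p/2,v}^{1/2}$, will yield an upper bound of $cC\|\omega_z\|_{p/2,v}^{1/2}$ plus an error proportional to $\varepsilon\,\|\chi_{(0,N)}\|_{q,u}$. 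Letting $\varepsilon \to 0^+$ with $N$ fixed will eliminate the error; monotone convergence as $N \to \infty$ will then remove the truncation, giving $\|(A\omega_z)^{1/2}\|_{q,u} \le cC\|\omega_z\|_{p/2,v}^{1/2}$. Squaring yields $\|A\omega_z\|_{q/2,u} \le c^2 C^2 \|\omega_z\|_{p/2,v}$, and the supremum over $z>1$ and $A \in \mathcal{A}$ completes the proof.

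The only real subtlety is the additive $\varepsilon$ appearing on the right of the pointwise bound from Proposition \ref{test fun all z}: since $u$ need not be integrable over $(0,\infty)$, the constant $c\varepsilon$ cannot be absorbed in $\|\cdot\|_{q,u}$ directly, which is what forces the truncation to $(0,N)$ to precede the limit in $\varepsilon$. Everything else is bookkeeping of norms and one clean change of variables.
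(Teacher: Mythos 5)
Your proposal follows essentially the same route as the paper: test the hypothesis against the functions of Proposition \ref{test fun all z}, bound $\|f_\varepsilon\|_{\Gamma_p(w)}$ by $\|\omega_z\|_{p/2,v}^{1/2}$ via exactly the change of variables you describe (the paper performs it once at the outset), and then remove the additive $\varepsilon$ by a limiting argument. The computation of the right-hand side and the final squaring are correct.

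The one step that does not quite go through as written is ``letting $\varepsilon\to0^+$ with $N$ fixed eliminates the error.'' The error term is $c\varepsilon\bigl(\int_0^N u\bigr)^{1/q}$, and the theorem assumes only $u\in L^+$, so $\int_0^N u$ may be infinite and the error then does not vanish. This is a degenerate case, and it is fixable in one line: you may assume $\|\omega_z\|_{p/2,v}<\infty$ (otherwise there is nothing to prove), and then applying the hypothesis to $f=\chi_{[0,1/z)}$, whose Fourier coefficients never vanish, forces $\hat f^*(N)^q\int_0^Nu\le\|\hat f^*\|_{q,u}^q<\infty$ with $\hat f^*(N)>0$, hence $\int_0^Nu<\infty$. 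It is worth noting how the paper avoids this entirely: instead of a fixed $\varepsilon$, it feeds $(\varepsilon/c)\,A\omega_z(Y)^{1/2}$ into Proposition \ref{test fun all z} and uses the monotonicity of $A\omega_z$ to convert the additive error on $[0,Y)$ into the multiplicative factor $(1-\varepsilon)$, i.e.\ $(1-\varepsilon)A\omega_z(y)^{1/2}\le c\hat f^*(y)$, which survives the limits regardless of any integrability of $u$ and also dispenses with the case split on $q\gtrless 1$. With the local-integrability reduction added, your argument is complete.
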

\begin{proof} Making the change of variable $t\mapsto 1/t$ on the right-hand side, (\ref{GammaLambda}) becomes,
\begin{equation}\label{pre-gamma}
\|\hat f^*\|_{q,u} \le C \bigg( \int_0^{\infty} \bigg( \int_0^{1/t} f^* \bigg)^p \, v(t)\, dt \bigg)^{1/p}.
\end{equation}
Fix $A\in\mathcal{A}$, $z>1$, and $\varepsilon\in(0,1)$. Let $Y>0$ and use Proposition \ref{test fun all z} to choose a function $f:\mathbb{T} \to \mathbb{C}$ such that  $f^* \le \chi_{[0,1/z)}$ and 
\[
(A\omega_z)^{1/2} \le c (\hat{f}^* + (\varepsilon/c)A\omega_z(Y)^{1/2}).
\]
Since $\omega_z$ is decreasing, so is $A\omega_z$. Thus, for $y\in [0,Y)$,
\[
A\omega_z (y)^{1/2} \le  c\hat{f}^*(y) + \varepsilon A\omega_z(Y)^{1/2} \le  c\hat{f}^*(y) + \varepsilon A\omega_z(y)^{1/2},
\]
so $(1-\varepsilon)A\omega_z (y)^{1/2} \le c\hat{f}^*(y)$. Therefore,
\[
(1-\varepsilon)^2\|(A\omega_z)\chi_{[0,Y)}\|_{q/2,u}
\le c^2\|\hat f^*\chi_{[0,Y)}\|_{q,u}^2
\le c^2\|\hat f^*\|_{q,u}^2.
\]
But, for all $y$, 
\[
\int_0^{1/y}f^*(t)\,dt\le\int_0^{1/y}\chi_{[0,1/z)}(t)\,dt=\omega_z(y)^{1/2},
\]
so, using (\ref{pre-gamma}),
\[
\|\hat f^*\|_{q,u}
\le C\|\omega_z^{1/2}\|_{p,v}
=C\|\omega_z\|_{p/2,v}^{1/2}.
\]
We conclude that
\[
(1-\varepsilon)^2\|(A\omega_z)\chi_{[0,Y)}\|_{q/2,u}\le c^2C^2\|\omega_z\|_{p/2,v}.
\]
Letting $Y\to\infty$, and then $\varepsilon\to0$, gives, 
\[
\|A\omega_z\|_{q/2,u}\le c^2C^2\|\omega_z\|_{p/2,v}
\]
and completes the proof. 
\end{proof}
A slight simplification of the above proof gives the corresponding result for the Fourier series inequality between $\Lambda$-spaces. Notice that in this case both the Fourier inequality and the weight condition depend only on the values of $w(t)$ for $0<t<1$.
\begin{cor}\label{weak necessary series} Suppose $0<p<\infty$, $0<q< \infty$, and for some $C>0$, $u,w \in L^+$ satisfy
\[
\|\hat f\|_{\Lambda_q(u)}\le C\|f\|_{\Lambda_p(w)},\quad f\in L^1(\mathbb{T}).
\] for all $f\in L^1(\mathbb{T})$. Then
\[
\sup_{z>1}\sup_{A \in \mathcal{A}}\dfrac{\| A\omega_z \|_{q/2,u}}{\| \chi_{(0,1/z)} \|_{p/2,w}} \le c^2C^2
\]
where $c$ is the constant in Proposition \ref{test fun all z}. 
\end{cor}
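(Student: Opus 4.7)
The plan is to adapt the proof of Theorem \ref{necessary series}, which already contains almost all of the required machinery. The only thing that changes is the norm on the right-hand side of the Fourier inequality, so I expect the same test functions from Proposition \ref{test fun all z} to work; the $\Lambda_p(w)$-norm is, if anything, easier to estimate than the $\Gamma_p(w)$-norm, so the proof should in fact be shorter.

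First, I would fix $A \in \mathcal{A}$, $z > 1$, $\varepsilon \in (0,1)$ and $Y > 0$, and invoke Proposition \ref{test fun all z} to obtain $f \in L^1(\mathbb{T})$ with $f^* \le \chi_{[0,1/z)}$ and $(A\omega_z)^{1/2} \le c(\hat f^* + (\varepsilon/c) A\omega_z(Y)^{1/2})$. The monotonicity of $A\omega_z$ then gives $(1-\varepsilon) A\omega_z(y)^{1/2} \le c\hat f^*(y)$ for $y \in [0,Y)$, exactly as in the theorem. Squaring, integrating against $u$ over $[0,Y)$, and using $\|\hat f^* \chi_{[0,Y)}\|_{q,u}^2 \le \|\hat f^*\|_{q,u}^2 = \|\hat f\|_{\Lambda_q(u)}^2$ yields
\[
(1-\varepsilon)^2 \|(A\omega_z)\chi_{[0,Y)}\|_{q/2,u} \le c^2 \|\hat f\|_{\Lambda_q(u)}^2.
\]

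Second, I would use the hypothesis and the pointwise bound $f^* \le \chi_{[0,1/z)}$ to estimate the right-hand side:
\[
\|\hat f\|_{\Lambda_q(u)} \le C\|f\|_{\Lambda_p(w)} = C\|f^*\|_{p,w} \le C\|\chi_{(0,1/z)}\|_{p,w} = C\|\chi_{(0,1/z)}\|_{p/2,w}^{1/2},
\]
where the last equality just uses $\chi_{(0,1/z)}^p = \chi_{(0,1/z)}^{p/2} = \chi_{(0,1/z)}$ inside the integral against $w$. Combining the two displays and letting $Y \to \infty$ (by monotone convergence) and then $\varepsilon \to 0$ gives
\[
\|A\omega_z\|_{q/2,u} \le c^2 C^2 \|\chi_{(0,1/z)}\|_{p/2,w}.
\]
Taking the supremum over $A \in \mathcal{A}$ and $z > 1$ produces the stated conclusion.

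There is no real obstacle here: the replacement of $\Gamma_p(w)$ by $\Lambda_p(w)$ eliminates the need to rewrite the $\Gamma$-norm as an integral of $\int_0^{1/t} f^*$ against a transformed weight $v(t) = t^{p-2}w(1/t)$, and the trivial estimate $\|f^*\|_{p,w} \le \|\chi_{(0,1/z)}\|_{p,w}$ replaces the more careful bound through $\omega_z^{1/2}$ used in the theorem. The only point to verify carefully is the identity $\|\chi_{(0,1/z)}\|_{p,w}^2 = \|\chi_{(0,1/z)}\|_{p/2,w}$, which simply reflects that $\chi_{(0,1/z)}$ is $\{0,1\}$-valued so its $L^p(w)$-norm depends on $p$ only through the outer root.
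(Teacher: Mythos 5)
Your proposal is correct and is exactly the argument the paper intends: the paper proves the corollary by remarking that it follows from "a slight simplification" of the proof of Theorem \ref{necessary series}, and the simplification you carry out — keeping the test functions and the $[0,Y)$-truncation/$\varepsilon$-limit device verbatim, while replacing the $\Gamma_p(w)$ estimate by the direct bound $\|f^*\|_{p,w}\le\|\chi_{(0,1/z)}\|_{p,w}=\|\chi_{(0,1/z)}\|_{p/2,w}^{1/2}$ — is precisely that simplification.
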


\section{Main Results}\label{WC}

In this section we present weight conditions that ensure the boundedness of the Fourier coefficient map between Lorentz spaces. For a large range of indices these coincide with the necessary conditions obtained in the previous section to give a characterization of exactly those weights for which the map is bounded. The focus is on the inequality (\ref{GammaGamma}), which expresses the boundedness of the Fourier coefficient map from $\Gamma_p(w)$ to $\Gamma_q(u)$ but we will see that exactly the same weight conditions give boundedness from $\Gamma_p(w)$ to $\Lambda_q(u)$. Under mild conditions on $w$ the boundedness from $\Lambda_p(w)$ to $\Lambda_q(u)$ is also equivalent.

While the most interesting results involve weights $w$ that are supported on $[0,1]$, other weights are permitted. The interested reader may verify that both the Fourier inequalities and the various weight conditions depend on $w\chi_{(1,\infty)}$ only through the value of $\int_1^\infty w(t)\,\frac{dt}{t^p}$. Similarly, any weight $u$ is permitted, but the most interesting cases involve weights $u$ that are constant on $[n-1,n)$ for $n=1,2\dots$. See Theorem \ref{necessary sufficient averaging omega_z series}\ref{NS3} for an indication that only the values $\int_{n-1}^nu(t)\,dt$ for $n=1,2,\dots$, are of significance.

For sufficiency of the weight conditions we actually prove the boundedness result for a large class of operators that includes the Fourier coefficient map. Let $(X,\mu)$ and $(Y,\nu)$ be $\sigma$-finite measure spaces with $\mu(X)=1$, and let $T$ be a sublinear operator from $L^1_\mu+L^2_\mu$ to $L^2_\nu+L^\infty_\nu$. We say that  $T$ is of type $(1,\infty)$ and $(2,2)$ provided $T$ is a bounded map both from $L^1_\mu$ to $L^{\infty}_\nu$ and from $L^2_\mu$ to $L^2_\nu$. The Fourier coefficient map is one such operator; in this case $\mu$ is Lebesgue measure on $[0,1]$ and $\nu$ is counting measure on $\mathbb{Z}$. 

Let $\mathcal{T}$ denote the collection of all sublinear operators $T$ of type $(1,\infty)$ and $(2,2)$, over all probability measures $\mu$ and $\sigma$-finite measures $\nu$. Propositions \ref{sufficient series} and \ref{CC}, below, give several weight conditions that are sufficient for the inequality,
\begin{equation}\label{general norm form}
\|Tf\|_{\Gamma_q(u)} \le C \|f\|_{\Gamma_p(w)},\quad f\in L^1_\mu,
\end{equation} 
to hold for all $T\in\mathcal{T}$. Recalling that $v(t)=t^{p-2}w(1/t)$ it is easy to rewrite this as,
\begin{equation}\label{integral form}
\bigg( \int_0^{\infty} (Tf)^{**}(t)^q u(t) dt \bigg)^{1/q} \le C \bigg( \int_0^{\infty} \bigg( \int_0^{\frac{1}{t}} f^*(s)\,ds \bigg)^p v(t) dt \bigg)^{1/p},
\quad f\in L^1_\mu.
\end{equation}
(Note that since $\mu$ is a finite measure, $L^1_\mu+L^2_\mu=L^1_\mu$.)

The results of this section are based on the following corollary of a rearrangement estimate from \cite{JT}. 
\begin{prop}\label{Jodeit-Torchinsky} Suppose $(X,\mu)$ and $(Y,\nu)$ are $\sigma$-finite measure spaces and let $T$ be a sublinear operator from $L^1_\mu+L^2_\mu$ to $L^2_\nu+L^\infty_\nu$. Then $T\in\mathcal{T}$ if and only if there exists a constant $D_T$ such that
\begin{equation}\label{JTplus}
\int_0^z (Tf)^{**}(t)^2 \ dt \le D_T \int_0^z \bigg( \int_0^{\frac{1}{t}} f^*(s)\,ds \bigg)^2 \ dt 
\end{equation}
for all $z>0$ and $f\in L^1_\mu+L^2_\mu$.
\end{prop}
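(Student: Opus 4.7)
The plan is to establish both directions of the equivalence. The ``if'' direction derives the $(1,\infty)$ and $(2,2)$ type bounds directly from (\ref{JTplus}), and the ``only if'' direction uses a Calder\'on-type truncation together with a truncated Hardy inequality.

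For sufficiency, the change of variable $u = 1/t$ (using $\int_0^u f^* = u\,f^{**}(u)$) converts the right-hand side of (\ref{JTplus}) into $\int_{1/z}^\infty f^{**}(u)^2\,du$. Letting $z \to \infty$ and applying the classical Hardy inequality $\int_0^\infty f^{**}(u)^2\,du \le 4\|f\|_2^2$, together with $(Tf)^* \le (Tf)^{**}$, gives $\|Tf\|_2 \le 2\sqrt{D_T}\,\|f\|_2$. For the $(1,\infty)$ bound, monotonicity of $(Tf)^{**}$ gives $z(Tf)^{**}(z)^2 \le \int_0^z (Tf)^{**}(t)^2\,dt$, and since $\int_0^{1/t} f^* \le \|f\|_1$, this yields $(Tf)^{**}(z) \le \sqrt{D_T}\,\|f\|_1$ uniformly in $z$. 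Taking $z \to 0^+$ concludes $\|Tf\|_\infty \le \sqrt{D_T}\,\|f\|_1$.

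For necessity, assume $\|Tf\|_\infty \le C_1\|f\|_1$ and $\|Tf\|_2 \le C_2\|f\|_2$. Fix $z > 0$ and perform a Calder\'on truncation at level $\alpha = f^*(1/z)$: write $f_2 = (|f|-\alpha)_+\operatorname{sgn}(f)$ and $f_1 = f - f_2$. Direct computation with rearrangements yields $\|f_2\|_1 \le \int_0^{1/z} f^*$, and using $\alpha \le f^{**}(1/z) = z\int_0^{1/z} f^*$ also $\|f_1\|_2^2 \le z\bigl(\int_0^{1/z} f^*\bigr)^2 + \int_{1/z}^\infty f^*(r)^2\,dr$. Subadditivity of the decreasing rearrangement gives $(Tf)^*(2s) \le (Tf_1)^*(s) + (Tf_2)^*(s)$. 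Squaring, integrating over $s \in (0,z)$, and applying the bounds $\int_0^z (Tf_1)^*(s)^2\,ds \le \|Tf_1\|_2^2 \le C_2^2\|f_1\|_2^2$ and $\int_0^z (Tf_2)^*(s)^2\,ds \le z\|Tf_2\|_\infty^2 \le zC_1^2\|f_2\|_1^2$, and finally rescaling $z \to z/2$, produces
\[
\int_0^z (Tf)^*(u)^2\,du \le E\biggl[z\Bigl(\int_0^{2/z} f^*\Bigr)^2 + \int_{2/z}^\infty f^*(r)^2\,dr\biggr]
\]
for some constant $E$ depending on $C_1$ and $C_2$.

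Two observations then close the argument. The first right-hand term is at most $2\int_0^{z/2}(\int_0^{1/t} f^*)^2\,dt$, since $\int_0^{1/t} f^* \ge \int_0^{2/z} f^*$ for $t \le z/2$; the second is dominated via $f^* \le f^{**}$ together with the identity $\int_0^z (\int_0^{1/t}f^*)^2\,dt = \int_{1/z}^\infty f^{**}(u)^2\,du$, so both are absorbed into $\int_0^z (\int_0^{1/t}f^*)^2\,dt$. A truncated Hardy inequality $\int_0^z (Tf)^{**}(t)^2\,dt \le 4\int_0^z (Tf)^*(t)^2\,dt$, obtained by integration by parts on $[(Tf)^{**}]^2$ over $(0,z)$ followed by Cauchy--Schwarz, then transfers the resulting $(Tf)^*$ estimate to the desired bound on $(Tf)^{**}$, yielding (\ref{JTplus}). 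The delicate element is the choice of truncation threshold $\alpha = f^*(1/z)$: it is precisely this choice that makes the two error terms recombine cleanly into the single target integral. The truncated Hardy inequality, while elementary, is not standard enough to cite and must be verified in passing.
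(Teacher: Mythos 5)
Your proof is correct. Note that the paper does not prove this proposition from scratch: it cites \cite{JT} for the version of (\ref{JTplus}) with $(Tf)^*$ in place of $(Tf)^{**}$ on the left, and then remarks that the $(Tf)^{**}$ version follows because Hardy's inequality shows $f\mapsto (Tf)^{**}$ is again of types $(1,\infty)$ and $(2,2)$. Your argument is a self-contained reconstruction of exactly that route: the Calder\'on truncation at threshold $\alpha=f^*(1/z)$, with the resulting bounds $\|f_2\|_1\le\int_0^{1/z}f^*$ and $\|f_1\|_2^2\le z\bigl(\int_0^{1/z}f^*\bigr)^2+\int_{1/z}^\infty f^*(r)^2\,dr$, is the Jodeit--Torchinsky mechanism, and your truncated Hardy inequality $\int_0^z (Tf)^{**}(t)^2\,dt\le 4\int_0^z (Tf)^*(t)^2\,dt$ (valid with the untruncated constant because the boundary term $-F(z)^2/z$ in the integration by parts has a favorable sign) plays the role of the paper's Hardy remark. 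The sufficiency direction, which the paper leaves entirely to the citation, also checks out, including the identity $\int_0^z\bigl(\int_0^{1/t}f^*\bigr)^2dt=\int_{1/z}^\infty f^{**}(u)^2\,du$ on which both halves of your argument rely.
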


This result appears in \cite{JT} with $(Tf)^*$ instead of the larger $(Tf)^{**}$. But Hardy's inequality shows if $T\in \mathcal T$, then so is the map $f\mapsto (Tf)^{**}$. Since $((TF)^{**})^*=(TF)^{**}$, we obtain the statement above.
In the case that $T$ is the Fourier coefficient map, we may take $D_T=8$.

The next two theorems give sufficient conditions for the Fourier inequality (\ref{GammaGamma}). Recall that a function $h:(0,\infty)\to[0,\infty)$ is in $P\cap\Omega_{2,0}$ provided $t^2h(t)$ is increasing, $h(t)$ is decreasing, and  $h(t)$ is constant on $(0,1)$.
\begin{prop}\label{sufficient series} Suppose $0<p<\infty$, $0<q<\infty$, and $u,v\in L^+$. Let
\[
C_\Theta=\sup_{h\in P\cap\Omega_{2,0}}\frac{\|h\|_{\Theta_{q/2}(u)}}{\|h\|_{p/2,v}}.
\]
Then for each $T\in\mathcal{T}$ the inequality (\ref{general norm form}) holds with $C=\sqrt{D_TC_\Theta }$. In particular, the Fourier inequalities (\ref{GammaGamma}) and (\ref{GammaLambda}) hold with $C=\sqrt{8C_\Theta}$. Here $w(t)=t^{p-2}v(1/t)$.
\end{prop}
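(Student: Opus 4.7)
The plan is to apply the Jodeit--Torchinsky rearrangement estimate (Proposition \ref{Jodeit-Torchinsky}) and observe that its right-hand side is naturally a function in the cone $P\cap\Omega_{2,0}$ where the hypothesis $C_\Theta<\infty$ applies. The $\Theta$-norm is designed precisely to convert a pointwise inequality between double-star averages into a norm inequality, so the proof becomes essentially algebraic once the right pivot is identified.

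Set $g(t)=(Tf)^{**}(t)$ and $G(t)=\bigl(\int_0^{1/t}f^*\bigr)^2$. Proposition \ref{Jodeit-Torchinsky} reads $\int_0^z g^2\le D_T\int_0^z G$ for every $z>0$, which, after dividing by $z$, says $(g^2)^{**}\le D_TG^{**}=(D_TG)^{**}$. I then verify $G\in P\cap\Omega_{2,0}$: since $\mu(X)=1$, $f^*$ is supported in $(0,1)$, so $G(t)=\bigl(\int_0^1 f^*\bigr)^2$ is constant on $(0,1)$; $G$ is evidently decreasing; and the identity $t^2G(t)=f^{**}(1/t)^2$ shows that $t^2G(t)$ is increasing because $f^{**}$ is decreasing.

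Since $g^2$ is itself decreasing, $(g^2)^*=g^2$, and the bound $(g^2)^{**}\le(D_TG)^{**}$ places $g^2$ in the supremum defining $\|D_TG\|_{\Theta_{q/2}(u)}$. Hence
\[
\|Tf\|_{\Gamma_q(u)}^2=\|g^2\|_{q/2,u}\le\|D_TG\|_{\Theta_{q/2}(u)}=D_T\|G\|_{\Theta_{q/2}(u)}\le D_TC_\Theta\|G\|_{p/2,v}.
\]
A routine change of variable $t\leftrightarrow 1/t$ yields $\|G\|_{p/2,v}=\|f\|_{\Gamma_p(w)}^2$ with $w(t)=t^{p-2}v(1/t)$, and taking square roots gives (\ref{general norm form}) with $C=\sqrt{D_TC_\Theta}$. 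The Fourier coefficient case uses $D_T=8$, and (\ref{GammaLambda}) follows at once from (\ref{LTG}). The only subtlety is matching the definition of the $\Theta$-norm against the averaged form of Jodeit--Torchinsky; once this is seen, everything else is routine bookkeeping.
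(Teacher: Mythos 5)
Your proposal is correct and follows essentially the same route as the paper: both identify $h_f(t)=\bigl(\int_0^{1/t}f^*\bigr)^2$ as the pivot in $P\cap\Omega_{2,0}$, invoke Proposition \ref{Jodeit-Torchinsky} to get the $**$-domination, and use the definition of the $\Theta$-quasi-norm together with $C_\Theta$ and the change of variable $t\mapsto 1/t$ to close the argument. The only difference is cosmetic normalization (the paper works with $\varphi_f=(1/D_T)(Tf)^{**\,2}$ rather than carrying the factor $D_T$ on the right).
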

\begin{proof} Let $T\in\mathcal{T}$ and let $(X,\mu)$ be its associated probability space. Fix $f\in L^1_\mu$. Let $h_f$ and $\varphi_f$ be defined by
\[
h_f(t) = \bigg(\int_0^{1/t} f^*(s)\,ds \bigg)^2  \quad \text{and}\quad   \varphi_f(t) = (1/D_T)(Tf)^{**}(t)^2,
\]
where $D_T$ is the constant from Proposition \ref{Jodeit-Torchinsky}.
Notice that $h_f(t)$ is decreasing and $t^2 h_f(t) = f^{**}(1/t)^2$ is increasing. Also, since $\mu(X)=1$, $f^*$ vanishes outside the interval $(0,1)$ and therefore $h_f$ is constant on $(0,1)$. It follows that $h_f\in P\cap\Omega_{2,0}$. In addition, $\varphi_f$ is decreasing and Proposition \ref{Jodeit-Torchinsky} implies that $\varphi_f^{**}\le h_f^{**}$. 
Therefore,
\[
\| \varphi_f \|_{q/2,u}\le\| h_f \|_{\Theta_{q/2}(u)}\le C_\Theta\| h_f \|_{p/2,v}.
\]
This implies (\ref{integral form}) with $C=\sqrt{D_TC_\Theta}$. 

Since $w(t)=t^{p-2}v(1/t)$, (\ref{integral form}) becomes (\ref{general norm form}). Taking $T$ to be the Fourier coefficient map, and $D_T=8$, we obtain (\ref{GammaGamma}) with $C=\sqrt{8C_\Theta}$. The weaker inequality (\ref{GammaLambda}) is an immediate consequence. 
\end{proof}
\begin{rem}\label{example} Suppose $q=p$ and $u=v\in B_{p/2}$. Then $\Theta_{p/2}(u)=\Lambda_{p/2}(u)$, with equivalent norms, so for $h$ decreasing, $\|h\|_{\Theta_{p/2}(u)}\approx\|h\|_{\Lambda_{p/2}(u)}=\|h\|_{p/2,v}$. It follows that $C_\Theta<\infty$ and we have, with $w(t)=t^{p-2}u(1/t)$,
\[
\|\hat f\|_{\Gamma_p(u)}\le C\|f\|_{\Gamma_p(w)},\quad f\in L^1.
\]
In particular, when $1<p<2$ and $u(t)=v(t)=t^{p-2}$, we recover the well-known fact that the Fourier transform maps $L^p$ into the power-weighted Lorentz space $\ell^{p',p}$. Recall that,  
\[
\|\hat f\|_{\ell^{p',p}}=\left(\int_0^\infty t^{p/p'-1}(\hat f)^*(t)^p\,dt\right)^{1/p}.
\]
\end{rem}
Despite this example, $C_\Theta$ can often be difficult to estimate directly, so we provide a number of estimates in the next proposition. One that will figure prominently in our weight characterization is,
\begin{equation}\label{Comegadefn}
C_\omega=\sup_{z>1}\frac{\|\omega_z\|_{\Theta_{q/2}(u)}}{\|\omega_z\|_{p/2,v}}.
\end{equation}
Recall that $\omega_z(t)=\min(t^{-2},z^{-2})$. Also recall that $u^o$ denotes the level function of $u$ with respect to Lebesgue measure. Both will be needed in the statement and proof of the next theorem.
In view of Proposition \ref{sufficient series}, each of the following upper bounds for $C_\Theta$ gives a sufficient condition for (\ref{general norm form}) and hence for (\ref{GammaGamma}).

\begin{prop}\label{CC} Suppose $0<p<\infty$, $0<q<\infty$, and $u,v\in L^+$.
\begin{enumerate}
\item\label{TG} For any $p$ and $q$,
\[
C_\Theta\le \sup_{h\in P\cap\Omega_{2,0}}\bigg(\int_0^\infty\bigg(\frac1t\int_0^th(s)\,ds\bigg)^{q/2}u(t)\,dt\bigg)^{2/q}
\bigg(\int_0^\infty h(t)^{p/2}v(t)\,dt\bigg)^{-2/p}.
\]
\item\label{TA} If $q\ge2$, then 
\begin{align*}
C_\Theta=&\sup_{h\in P\cap\Omega_{2,0}}\sup_{A\in \mathcal{A}}\frac{\|Ah\|_{q/2,u}}{\|h\|_{p/2,v}}\quad\text{and}\\
C_\Theta\le& \sup_{h\in P\cap\Omega_{2,0}}\bigg(\int_0^\infty h(t)^{q/2}u^o(t)\,dt\bigg)^{2/q}
\bigg(\int_0^\infty h(t)^{p/2}v(t)\,dt\bigg)^{-2/p}.
\end{align*}
\item\label{Tu} If $p\le q$ and $q\ge2$  then 
\[
C_\Theta\le(4q')^{2/q}\sup_{1<1/x<y}\bigg(\frac 1{xy}\int_0^yu(t)\,dt\bigg)^{2/q}
\bigg(\int_0^\infty\omega_x(t)^{p/2}w(t)\,dt\bigg)^{-2/p}.
\]
\item\label{TO} If $p\le2\le q$ then $C_\Theta\le 2C_\omega$.

\end{enumerate}
\end{prop}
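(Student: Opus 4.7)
The plan is to handle the four parts in turn: (i) is a direct consequence of the definition; (ii) provides an averaging reformulation of the $\Theta$-norm that underpins the rest; and the last two use this together with the results of Section \ref{QC} under the additional index hypotheses. For part (i), the definition of the $\Theta$-norm gives
\[
\|h\|_{\Theta_{q/2}(u)}=\sup_{g^{**}\le h^{**}}\|g^*\|_{q/2,u}\le\sup_{g^{**}\le h^{**}}\|g^{**}\|_{q/2,u}\le\|h^{**}\|_{q/2,u}.
\]
Since $h\in\Omega_{2,0}$ is decreasing, $h^{**}(t)=\tfrac1t\int_0^t h(s)\,ds$, so dividing by $\|h\|_{p/2,v}$ and taking the supremum over $h\in P\cap\Omega_{2,0}$ yields (i).

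For part (ii), the hypothesis $q\ge 2$ gives $q/2\ge 1$, so that the $\Theta$-norm of a decreasing function admits the averaging representation $\|h\|_{\Theta_{q/2}(u)}=\sup_{A\in\mathcal{A}}\|Ah\|_{q/2,u}$, from \cite{japan}. Substituting this into the definition of $C_\Theta$ yields the first equality at once. The second is the level-function estimate $\sup_{A\in\mathcal{A}}\int (Ah)^{q/2}u\le\int h^{q/2}u^o$ for decreasing $h$ and $q/2\ge 1$, which is the defining extremal property of $u^o$ (see \cite{CCDS,LRI,ECF}).

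For part (iv), combine the averaging representation from (ii) with Corollary \ref{averaging operator norm series fourier}, whose hypothesis $0<p\le 2\le q<\infty$ is exactly ours. For each fixed $A\in\mathcal{A}$ the corollary gives
\[
\sup_{h\in P\cap\Omega_{2,0}}\frac{\|Ah\|_{q/2,u}}{\|h\|_{p/2,v}}\le 2\sup_{z>1}\frac{\|A\omega_z\|_{q/2,u}}{\|\omega_z\|_{p/2,v}}.
\]
Interchanging the order of the suprema over $A$ and $z$ and recognizing $\sup_A\|A\omega_z\|_{q/2,u}=\|\omega_z\|_{\Theta_{q/2}(u)}$, once again by the averaging representation applied to the decreasing function $\omega_z$, gives $C_\Theta\le 2C_\omega$.

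For part (iii), with $p\le q$ and $q\ge 2$, I would start from the bound of (i), $C_\Theta\le\sup_h\|h^{**}\|_{q/2,u}/\|h\|_{p/2,v}$, and apply a sharp two-weight Hardy inequality restricted to the cone $P\cap\Omega_{2,0}$. The identity $v(t)=t^{p-2}w(1/t)$ allows the substitution $t\mapsto 1/t$ to convert $\|\omega_x\|_{p/2,w}$ into a power-weighted variant of $\|\omega_{1/x}\|_{p/2,v}$, where $x\in(0,1)$ so that $1/x>1$ and $\omega_{1/x}\in P\cap\Omega_{2,0}$. Combining Corollary \ref{maligranda series fourier} (for the Maligranda-style reduction to the test family $\{\omega_{1/x}\}$) with the sharp constant in Sinnamon's weighted Hardy inequality produces the two-parameter Muckenhoupt-type condition, with $x$ labelling the test function and $y>1/x$ the averaging cutoff for $u$, and with the sharp constant $(4q')^{2/q}$. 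This last step is the main obstacle: carrying the sharp constant through the variable change and matching the $x$--$y$ coupling requires careful bookkeeping; parts (i), (ii) and (iv) are comparatively direct once the averaging representation of the $\Theta$-norm is in hand.
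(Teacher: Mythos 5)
Parts \ref{TG}, \ref{TA} and \ref{TO} of your proposal are correct and follow essentially the paper's route: \ref{TG} is the chain $\|h\|_{\Theta_{q/2}(u)}\le\|h\|_{\Gamma_{q/2}(u)}$ (this is just (\ref{LTG})); \ref{TA} is the averaging representation of the $\Theta$-norm for decreasing functions when $q/2\ge1$ together with the level-function bound (the paper cites Corollary 2.4 of \cite{FTLS} for both); and \ref{TO} is exactly the paper's argument, applying Corollary \ref{averaging operator norm series fourier} to the equality in \ref{TA} and interchanging the suprema.

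Part \ref{Tu}, however, is not proved: you give a sketch and explicitly defer the "main obstacle," which is precisely the substance of this part. Two concrete problems. First, you propose to start from the bound in \ref{TG}, which involves $\|h^{**}\|_{q/2,u}$ with the weight $u$ itself; but the quantity $\sup_{y\ge 1/x}\frac1{xy}\int_0^y u$ in the conclusion is a signature of the level function, and it will not fall out of a two-weight Hardy inequality for $u$ (that route leads to a different, Bradley/Muckenhoupt-type condition). The correct starting point is the second bound in \ref{TA}, i.e.\ $\|h\|_{q/2,u^o}$. Second, the actual derivation is elementary and does not need a sharp Hardy inequality: since $p\le q$, Corollary \ref{maligranda series fourier} reduces the supremum over $h\in P\cap\Omega_{2,0}$ to the test family $\{\omega_z\}_{z>1}$ at the cost of $2^{2/q}$; the change of variable $t\mapsto1/t$ with $v(t)=t^{p-2}w(1/t)$ turns $\|\omega_z\|_{p/2,v}$ into $x^2\big(\int_0^\infty\omega_x^{p/2}w\big)^{2/p}$ with $x=1/z$; because $u^o$ is decreasing one has
\[
\int_z^\infty u^o(t)\,\frac{dt}{t^q}\le\frac{z^{-q}}{q-1}\int_0^z u^o(t)\,dt,
\qquad\text{so}\qquad
\|\omega_z\|_{q/2,u^o}\le\Big(q'z^{-q}\int_0^z u^o\Big)^{2/q};
\]
and finally Lemma 2.5 of \cite{FTLS} gives $\frac1z\int_0^z u^o\le 2\sup_{y\ge z}\frac1y\int_0^y u$, which is where the variable $y$ and the coupling $y\ge 1/x$ enter. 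Collecting the factors $2^{2/q}\cdot(2q')^{2/q}=(4q')^{2/q}$ yields the stated constant. Without these steps (in particular the passage from $u^o$ back to $u$), your argument for \ref{Tu} is incomplete.
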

\begin{proof} Inequality (\ref{LTG}) shows that for any decreasing $h$,
\[
\|h\|_{\Theta_{q/2}(u)}\le\|h\|_{\Gamma_{q/2}(u)}
=\bigg(\int_0^\infty\bigg(\frac1t\int_0^th(s)\,ds\bigg)^{q/2}u(t)\,dt\bigg)^{2/q},
\]
which proves \ref{TG}. 

When $q\ge2$, Corollary 2.4 of \cite{FTLS} shows that for each decreasing $h$,
\begin{equation}\label{Cor24}
\|h\|_{\Theta_{q/2}(u)}=\sup_{A\in\mathcal{A}}\|Ah\|_{q/2,u}\le\|h\|_{q/2,u^o}.
\end{equation}
This gives both the equation and the upper bound in \ref{TA}. It also shows that when $q\ge2$, 
\begin{equation}\label{Comega}
C_\omega=\sup_{z>1}\sup_{A\in \mathcal{A}}\frac{\|A\omega_z\|_{q/2,u}}{\|\omega_z\|_{p/2,v}},
\end{equation}
which will be useful later.

For \ref{Tu} we begin by applying Corollary \ref{maligranda series fourier} to the upper bound from \ref{TA}. Since $p\le q$,
\[
C_\Theta\le\sup_{h\in P\cap\Omega_{2,0}}\frac{\|h\|_{q/2,u^o}}{\|h\|_{p/2,v}}\le 2^{2/q}\sup_{z>1}\frac{\|\omega_z\|_{q/2,u^o}}{\|\omega_z\|_{p/2,v}}.
\]
Let $x=1/z$, and make the change of variable $t\mapsto 1/t$ in the denominator to get,
\[
\|\omega_z\|_{p/2,v}
=\bigg(\int_0^\infty\min(t^p,x^p)t^{-p}w(t)\,dt\bigg)^{2/p}
=x^2\bigg(\int_0^\infty\omega_x(t)^{p/2}w(t)\,dt\bigg)^{2/p}.
\]
To estimate the numerator, observe that since $u^o$ is decreasing,
\[
\int_z^\infty u^o(t)\,\frac{dt}{t^q}\le u^o(z)\int_z^\infty \,\frac{dt}{t^q}
=u^o(z)\frac{z^{-q}}{q-1}\int_0^z\,dt
\le \frac{z^{-q}}{q-1}\int_0^zu^o(t)\,dt.
\]
Thus, 
\[
\|\omega_z\|_{q/2,u^o}= \bigg(z^{-q}\int_0^z u^o(t)\,dt+\int_z^\infty u^o(t)\,\frac{dt}{t^q}\bigg)^{2/q}
\le\bigg(q'z^{-q}\int_0^zu^o(t)\,dt\bigg)^{2/q}.
\]
But Lemma 2.5 of \cite{FTLS} shows that
\[
\frac1z\int_0^zu^o(t)\,dt\le2\sup_{y\ge z}\frac1y\int_0^yu(t)\,dt,
\]
so,
\[
\|\omega_z\|_{q/2,u^o}
\le\bigg(2q'z^{1-q}\sup_{y\ge z}\frac 1y\int_0^yu(t)\,dt\bigg)^{2/q}
= x^2\bigg(2q'\sup_{1/x\le y}\frac 1{xy}\int_0^yu(t)\,dt\bigg)^{2/q}.
\]
This estimate gives,
\[
C_\Theta\le2^{2/q}\sup_{1/x>1}x^2\bigg(2q'\sup_{1/x\le y}\frac 1{xy}\int_0^yu(t)\,dt\bigg)^{2/q}x^{-2}
\bigg(\int_0^\infty\omega_x(t)^{p/2}w(t)\,dt\bigg)^{-2/p},
\]
which simplifies to the conclusion of \ref{Tu}.

To prove \ref{TO}, apply Corollary \ref{averaging operator norm series fourier} to the equation from \ref{TA} to get
\[
C_\Theta=\sup_{h\in P\cap\Omega_{2,0}}\sup_{A\in \mathcal{A}}\frac{\|Ah\|_{q/2,u}}{\|h\|_{p/2,v}}
\le2\sup_{z>1}\sup_{A\in \mathcal{A}}\frac{\|A\omega_z\|_{q/2,u}}{\|\omega_z\|_{p/2,v}}
=2C_\omega,
\]
where the last equality is (\ref{Comega}).
\end{proof}

Next we combine the sufficiency results above with the necessary conditions obtained in Section \ref{NC} to obtain a necessary and sufficient condition for the boundedness of the Fourier coefficient map between weighted Lorentz spaces. Recall that $v(t)=t^{p-2}w(1/t)$ and refer to expressions (\ref{Comegadefn}) and (\ref{Comega}) for the constant $C_\omega$.
\begin{thm}\label{NandS} Let $0<p\le 2\le q<\infty$ and $u,w \in L^+$. The Fourier inequality, 
\[
\|\hat f\|_{\Gamma_q(u)}\le C\|f\|_{\Gamma_p(w)},\quad f\in L^1(\mathbb{T}), 
\]
holds if and only if $C_\omega<\infty$. Moreover, for the best constant $C$,
\[
\frac{\sqrt C_\omega}{549}\le C\le 4\sqrt{C_\omega}.
\] 
\end{thm}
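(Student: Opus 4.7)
The proof amounts to assembling results already established in the paper; no new construction is required, and the two inequalities for the best constant $C$ correspond to the sufficiency and necessity halves respectively.

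For sufficiency, the plan is to invoke Proposition \ref{sufficient series}, which gives the Fourier inequality (\ref{GammaGamma}) with constant $\sqrt{8C_\Theta}$. To control $C_\Theta$ by $C_\omega$, I would apply Proposition \ref{CC}\ref{TO}, which is valid precisely in the range $p\le2\le q$ assumed in the theorem; it yields $C_\Theta\le 2C_\omega$. Chaining the two bounds gives
\[
C\le\sqrt{8C_\Theta}\le\sqrt{16C_\omega}=4\sqrt{C_\omega},
\]
so whenever $C_\omega<\infty$ the Fourier inequality holds with the stated constant.

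For necessity, suppose the Fourier inequality (\ref{GammaGamma}) holds with constant $C$. Since $\|\hat f\|_{\Lambda_q(u)}\le\|\hat f\|_{\Gamma_q(u)}$ by (\ref{LTG}), the inequality (\ref{GammaLambda}) also holds with the same $C$, and Theorem \ref{necessary series} applies to give
\[
\sup_{z>1}\sup_{A\in\mathcal{A}}\frac{\|A\omega_z\|_{q/2,u}}{\|\omega_z\|_{p/2,v}}\le c^2C^2,
\]
with $c=549$. The key observation is that in the range $q\ge2$ this double supremum equals $C_\omega$: this was recorded as equation (\ref{Comega}) during the proof of Proposition \ref{CC}\ref{TO}, as a consequence of the identity $\|h\|_{\Theta_{q/2}(u)}=\sup_{A\in\mathcal{A}}\|Ah\|_{q/2,u}$ from Corollary 2.4 of \cite{FTLS} applied to the decreasing function $\omega_z$. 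Therefore $C_\omega\le 549^2 C^2$, which rearranges to $\sqrt{C_\omega}/549\le C$.

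The only real subtlety is verifying that the identity $C_\omega=\sup_z\sup_A\|A\omega_z\|_{q/2,u}/\|\omega_z\|_{p/2,v}$ is genuinely available here, since $C_\omega$ was defined via the $\Theta$-norm. This is not so much an obstacle as a consistency check: it hinges on $\omega_z$ being decreasing so that Corollary 2.4 of \cite{FTLS} applies, and on the assumption $q\ge2$ which the theorem provides. With that matched up, the two halves fit together with no further work, and combining them yields both directions of the equivalence and the quantitative bounds $\sqrt{C_\omega}/549\le C\le 4\sqrt{C_\omega}$ for the optimal constant.
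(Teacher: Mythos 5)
Your proposal is correct and follows essentially the same route as the paper: sufficiency via Proposition \ref{sufficient series} combined with Proposition \ref{CC}\ref{TO} to get $C\le\sqrt{8\cdot 2C_\omega}=4\sqrt{C_\omega}$, and necessity via (\ref{LTG}), Theorem \ref{necessary series}, and the identification (\ref{Comega}) of $C_\omega$ with the double supremum, valid because $q\ge2$. Your consistency check on (\ref{Comega}) is exactly the point the paper relies on, so nothing is missing.
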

\begin{proof} Let $C$ be the least constant, finite or infinite, in the above Fourier inequality. Propositions \ref{sufficient series} and \ref{CC}\ref{TO} show that for any $f\in L^1(\mathbb{T})$, 
\[
\|\hat f\|_{\Gamma_q(u)}
\le\sqrt{8C_\Theta}\|f\|_{\Gamma_p(w)}
\le4\sqrt{C_\omega}\|f\|_{\Gamma_p(w)}.
\]
Thus, $C\le4\sqrt{C_\omega}$.

On the other hand, inequality (\ref{LTG}) shows that we also have,
\[
\|\hat f\|_{\Lambda_q(u)}\le C\|f\|_{\Gamma_p(w)},\quad f\in L^1(\mathbb{T}), 
\]
so (\ref{Comega}) and Theorem \ref{necessary series} give,
\[
C_\omega=\sup_{z>1}\sup_{A\in \mathcal{A}}\frac{\|A\omega_z\|_{q/2,u}}{\|\omega_z\|_{p/2,v}}\le(549C)^2.
\]
This completes the proof.
\end{proof}
In the case $q=2$ the necessary and sufficient condition $C_\omega<\infty$ can be put in a form that is especially simple to estimate.
\begin{thm}\label{NSq=2} Let $0<p\le 2$ and $u,w \in L^+$. If $C$ is the best constant in the Fourier inequality, 
\[
\|\hat f\|_{\Gamma_2(u)}\le C\|f\|_{\Gamma_p(w)},\quad f\in L^1(\mathbb{T}), 
\]
then
\[
C_{xy}/549\le C\le 8C_{xy},
\] 
where
\[
C_{xy}=\sup_{1<1/x<y}\bigg(\frac 1{xy}\int_0^yu(t)\,dt\bigg)^{1/2}
\bigg(x^{-p}\int_0^xw(t)\,dt+\int_x^\infty w(t)\,\frac{dt}{t^p}\bigg)^{-1/p}.
\]
\end{thm}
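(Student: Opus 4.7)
The plan is to specialize Theorem \ref{NandS} to $q=2$ and show that in this case the abstract weight constant $C_\omega$ is equivalent to the explicit quantity $C_{xy}^2$. Both bounds flow from identifying the two factors making up $C_\omega$ via (\ref{Comega}): the numerator $\|A\omega_z\|_{1,u}$ and the denominator $\|\omega_z\|_{p/2,v}$. The preliminary move is the substitution $s=1/t$ in
\[
\|\omega_z\|_{p/2,v}^{p/2}=\int_0^\infty \min(t^{-p},z^{-p})\,t^{p-2}w(1/t)\,dt,
\]
which, with $x=1/z$, turns the integral into $\int_0^x w(s)\,ds + x^p\int_x^\infty w(s)s^{-p}\,ds$. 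Pulling $x^p$ outside matches exactly the $w$-factor in $C_{xy}$, giving $\|\omega_z\|_{p/2,v}=x^2\bigl(x^{-p}\int_0^x w + \int_x^\infty w/s^p\bigr)^{2/p}$.

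For the upper bound, I would apply Proposition \ref{CC}\ref{Tu} directly at $q=2$. The prefactor $(4q')^{2/q}$ collapses to $8$, and the integral $\int_0^\infty\omega_x(t)^{p/2}w(t)\,dt$ appearing there is exactly $x^{-p}\int_0^x w + \int_x^\infty w/s^p$. Thus $C_\Theta\le 8\,C_{xy}^2$. Proposition \ref{sufficient series}, with $D_T=8$ for the Fourier coefficient map, then yields
\[
C\le \sqrt{D_T C_\Theta}\le \sqrt{64\,C_{xy}^2}=8\,C_{xy}.
\]

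For the lower bound, (\ref{LTG}) upgrades the $\Gamma_2$ inequality to a $\Lambda_2$ inequality with the same constant, so Theorem \ref{necessary series} applies and gives $\sup_{z>1,\,A\in\mathcal A}\|A\omega_z\|_{1,u}/\|\omega_z\|_{p/2,v}\le(549C)^2$. The task is then to bound this supremum from below by $C_{xy}^2$. Given $1<1/x<y$, set $z=1/x$ and let $A_y$ be the averaging operator whose only averaging interval is $(0,y)$. Using $y>z$ and the monotonicity of $\omega_z$,
\[
\int_0^y\omega_z(t)\,dt=\frac{2}{z}-\frac{1}{y}\ge \frac{1}{z}=x,
\]
so $A_y\omega_z\ge x/y$ on $(0,y)$ and therefore $\|A_y\omega_z\|_{1,u}\ge (x/y)\int_0^y u$. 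Dividing by the explicit form of $\|\omega_z\|_{p/2,v}$ computed above and using the cancellation $(x/y)/x^2=1/(xy)$ produces precisely $C_{xy}^2$. Taking the supremum over admissible $(x,y)$ delivers $C_\omega\ge C_{xy}^2$, and hence $C\ge C_{xy}/549$.

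The main subtle step is the choice of the test operator $A_y$ for the lower bound: it is the averaging over $(0,y)$ that conjures the factor $(x/y)/x^2=1/(xy)$ matching the $u$-factor of $C_{xy}$. Everything else is bookkeeping of constants — the factor $8$ enters twice in the sufficiency (through $(4q')^{2/q}$ in Proposition \ref{CC}\ref{Tu} and through $D_T$ in Proposition \ref{sufficient series}) to produce $8C_{xy}$, while the constant $549$ in the necessity is inherited directly from Theorem \ref{necessary series} via the test function construction of Section \ref{NC}.
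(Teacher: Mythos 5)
Your proof is correct. The upper bound is exactly the paper's argument: Proposition \ref{CC}\ref{Tu} at $q=2$ gives $C_\Theta\le 8C_{xy}^2$ (after the change of variable identifying $\|\omega_z\|_{p/2,v}$ with $x^2$ times the $w$-factor of $C_{xy}^2$), and Proposition \ref{sufficient series} with $D_T=8$ yields $C\le 8C_{xy}$. For the lower bound you diverge from the paper in a worthwhile way. The paper passes through the level function: it majorizes $\sup_{1/x<y}\frac1y\int_0^y u$ by $x\int_0^{1/x}u^o$ and then invokes Lemma 2.2 of \cite{FTLS} to convert $\|\omega_z\|_{1,u^o}$ into $\sup_{A\in\mathcal A}\|A\omega_z\|_{1,u}$, so the comparison with $C_\omega$ is mediated by an external duality result. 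You instead exhibit, for each admissible pair $(x,y)$, the explicit averaging operator $A_y$ over the single interval $(0,y)$ and compute $\frac1y\int_0^y\omega_z=\frac1y(\frac2z-\frac1y)\ge\frac x y$ directly, which gives $\|A_y\omega_z\|_{1,u}\ge\frac xy\int_0^y u$ and hence $C_\omega\ge C_{xy}^2$ with no reference to $u^o$. This is more self-contained and makes transparent which test operator witnesses the necessity of $C_{xy}<\infty$; the paper's route, on the other hand, records the intermediate inequality (\ref{levelbigger}) involving $u^o$, which it reuses later (in the final theorem of Section \ref{WC}). All your constants and the appeal to (\ref{LTG}), (\ref{Comega}) and Theorem \ref{necessary series} check out.
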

\begin{proof} With $q=2$, Proposition \ref{CC}\ref{Tu} shows $C_\Theta\le 8C_{xy}^2$ giving $C\le \sqrt{8C_\Theta}\le 8C_{xy}$.

It remains to show that $C_{xy}/549\le C$. Since $\frac1y\int_0^yu^o(t)\,dt$ is a decreasing function that majorizes $\frac1y\int_0^yu(t)\,dt$, we have 
\begin{equation}\label{levelbigger}
\sup_{1/x<y}\frac1y\int_0^yu(t)\,dt\le x\int_0^{1/x}u^o(t)\,dt.
\end{equation}
So, taking $z=1/x$ and applying Lemma 2.2 of \cite{FTLS},
\[
\sup_{1/x<y}\frac1{xy}\int_0^yu(t)\,dt
\le \int_0^z u^o(t)\,dt\le z^2\|\omega_z\|_{1,u^o}
=z^2\sup_{A\in\mathcal A}\|A\omega_z\|_{1,u}.
\]
Also,
\[
x^{-p}\int_0^xw(t)\,dt+\int_x^\infty w(t)\,\frac{dt}{t^p}
=z^p\|\omega_z\|_{p/2,v}^{p/2}.
\]
Therefore, by (\ref{Comega}) and Theorem \ref{NandS},
\[
C_{xy}^2\le\sup_{z>1}\sup_{A\in\mathcal A}\frac{\|A\omega_z\|_{1,u}}{\|\omega_z\|_{p/2,v}}=C_\omega\le(549C)^2.
\]
Taking square roots completes the proof.
\end{proof}
Examining the proofs of the last two theorems gives a more general result. We record it without tracking the estimates of the constants involved.

\begin{thm}\label{necessary sufficient averaging omega_z series} Let $0<p\le 2\le q<\infty$ and $u,w \in L^+$. The following are equivalent.
\begin{enumerate}
\item\label{NS1} For each $T\in \mathcal{T}$ there exists a finite constant $C$ such that 
\begin{equation}\label{NST}
\|Tf\|_{\Gamma_q(u)} \le C \|f\|_{\Gamma_p(w)},\quad f\in L^1_\mu.\text{ (Here $\mu$ depends on $T$.)}
\end{equation}
\item\label{NS2} There exists a finite constant $C$ such that 
\[
\|\hat f\|_{\Gamma_q(u)} \le C \|f\|_{\Gamma_p(w)},\quad f\in L^1(\mathbb{T}).
\]
\item\label{NS3} There exists a finite constant $C$ such that 
\[
\|\hat f\|_{\Lambda_q(u)} \le C \|f\|_{\Gamma_p(w)},\quad f\in L^1(\mathbb{T}).
\]
\item\label{NS4} $C_\omega<\infty$.
\end{enumerate}
When $q=2$, $C_{xy}<\infty$ is also equivalent.
\end{thm}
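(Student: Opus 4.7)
The plan is to close the cycle of implications (i)$\Rightarrow$(ii)$\Rightarrow$(iii)$\Rightarrow$(iv)$\Rightarrow$(i), and then handle the $q=2$ addendum separately. None of the steps should require new analytic content; the point is to tie together the sufficiency results of Section \ref{WC} with the necessity result of Section \ref{NC} via the identity (\ref{Comega}).

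First, for (i)$\Rightarrow$(ii), I would observe that the Fourier coefficient map, with $\mu$ Lebesgue measure on $[0,1]$ and $\nu$ counting measure on $\mathbb Z$, is bounded $L^1_\mu\to L^\infty_\nu$ (by the trivial estimate $|\hat f(n)|\le\|f\|_1$) and bounded $L^2_\mu\to L^2_\nu$ (by Parseval's identity). Hence it lies in $\mathcal T$, and specializing the inequality in (i) to this operator yields (ii). For (ii)$\Rightarrow$(iii), the pointwise inequality $\|\hat f\|_{\Lambda_q(u)}\le\|\hat f\|_{\Gamma_q(u)}$ from (\ref{LTG}) is all that is needed.

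Next, for (iii)$\Rightarrow$(iv), I would invoke Theorem \ref{necessary series} directly. With $v(t)=t^{p-2}w(1/t)$, it yields
\[
\sup_{z>1}\sup_{A\in\mathcal A}\frac{\|A\omega_z\|_{q/2,u}}{\|\omega_z\|_{p/2,v}}\le c^2 C^2<\infty.
\]
Because $q\ge 2$, the identity (\ref{Comega}) derived in the proof of Proposition \ref{CC} (via Corollary 2.4 of \cite{FTLS}) shows that the left-hand side equals $C_\omega$, so (iv) holds. For (iv)$\Rightarrow$(i), the hypothesis $p\le 2\le q$ permits the application of Proposition \ref{CC}\ref{TO}, giving $C_\Theta\le 2C_\omega<\infty$. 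Proposition \ref{sufficient series} then delivers, for every $T\in\mathcal T$, an inequality of the form (\ref{NST}) with $C=\sqrt{D_T C_\Theta}\le\sqrt{2 D_T C_\omega}<\infty$, which is condition (i).

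For the $q=2$ addendum, I would argue that $C_\omega$ and $C_{xy}^2$ are comparable, so that $C_\omega<\infty\iff C_{xy}<\infty$. One direction is Proposition \ref{CC}\ref{Tu} specialized to $q=2$, which shows $C_\Theta\le 8C_{xy}^2$ and, combined with $C_\Theta\ge \tfrac12 C_\omega$ via the pointwise inequality between $\Theta$- and $L^{p/2}$-norms applied to $\omega_z$, yields $C_\omega\lesssim C_{xy}^2$. Conversely, the computation in the proof of Theorem \ref{NSq=2} showing $C_{xy}^2\le C_\omega$ is based on (\ref{levelbigger}) and Lemma 2.2 of \cite{FTLS} and applies verbatim. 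The whole argument is essentially bookkeeping once the equivalences above are in place.

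The main obstacle, such as it is, lies in recognizing that the hypothesis $p\le 2\le q$ is used in two distinct essential ways: it licenses the identity (\ref{Comega}) (via $q\ge 2$) needed to translate Theorem \ref{necessary series} into the statement $C_\omega<\infty$, and it licenses the estimate $C_\Theta\le 2C_\omega$ in Proposition \ref{CC}\ref{TO} (via $p\le 2\le q$), which is what makes the sufficiency half of the cycle valid without strengthening the weight hypothesis. Verifying that the cycle is truly closed amounts to checking that these constants $c$, $D_T$, $C_\Theta$, $C_\omega$ can all be bounded in terms of one another in the stated index range; no new calculation is required.
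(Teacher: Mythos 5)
Your proposal is correct and follows essentially the same route as the paper: the cycle (i)$\Rightarrow$(ii)$\Rightarrow$(iii) via membership of the Fourier coefficient map in $\mathcal T$ and (\ref{LTG}), then (iii)$\Rightarrow$(iv) via Theorem \ref{necessary series} together with the identity (\ref{Comega}), and (iv)$\Rightarrow$(i) via Propositions \ref{sufficient series} and \ref{CC}\ref{TO}, with the $q=2$ addendum reduced to Theorem \ref{NSq=2}. The only cosmetic quibble is that for the addendum the inequality $C_\omega\le C_\Theta$ holds trivially (since each $\omega_z\in P\cap\Omega_{2,0}$), so no factor of $\tfrac12$ is needed there.
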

\begin{proof}
Since the Fourier coefficient map is in $\mathcal{T}$, \ref{NS1} implies \ref{NS2}. Inequality (\ref{LTG}) shows that \ref{NS2} implies \ref{NS3}. But if \ref{NS3} holds for some finite constant $C$, then the proof of Theorem \ref{NandS} provides $C_\omega\le (549 C)^2<\infty$ and gives \ref{NS4}. To complete the circle apply Propositions \ref{sufficient series} and \ref{CC}\ref{TO} to see that for each $T\in \mathcal{T}$, (\ref{NST}) holds with  $C=\sqrt{D_TC_\Theta}\le\sqrt{2D_TC_\omega}<\infty$.

The last statement of the theorem follows from Theorem \ref{NSq=2}.
\end{proof} 

Under a mild a priori condition on the weight $u$, the necessary and sufficient condition simplifies and the above theorem extends to a larger range of indices. Recall that if $u$ is decreasing, then $u\in B_{1,\infty}$ and $u\in B_{q/2}$ for every $q>2$.
\begin{prop}\label{NSunice} Let $0<p\le q<\infty$, $2\le q$ and $u,w \in L^+$. Suppose that $u\in B_{q/2}$ if $q>2$, and $u\in B_{1,\infty}$ if $q=2$. Then the Fourier inequality, 
\[
\|\hat f\|_{\Gamma_q(u)} \le C \|f\|_{\Gamma_p(w)},\quad f\in L^1(\mathbb{T}).
\]
holds if and only if 
\begin{equation}\label{nolevel}
\sup_{0<x<1}\bigg(\int_0^{1/x}u(t)\,dt\bigg)^{1/q}
\bigg(x^{-p}\int_0^xw(t)\,dt+\int_x^\infty w(t)\,\frac{dt}{t^p}\bigg)^{-1/p}
\end{equation}
is finite. In fact, under the above hypothesis on $u$, statements \ref{NS1}, \ref{NS2} and \ref{NS3} of Theorem \ref{necessary sufficient averaging omega_z series} are all equivalent to the finiteness of (\ref{nolevel}).
\end{prop}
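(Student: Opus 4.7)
The plan is to combine Theorem \ref{necessary series} for necessity with the equation in Proposition \ref{CC}\ref{TA} together with Proposition \ref{sufficient series} for sufficiency, letting the hypothesis on $u$ collapse both sides onto the single explicit quantity (\ref{nolevel}). Since none of Theorem \ref{necessary series}, Proposition \ref{sufficient series}, Proposition \ref{CC}\ref{TA}, or Corollary \ref{maligranda series fourier} requires $p\le 2$, the argument naturally covers the extended range $p>2$ that lies outside the scope of Theorem \ref{necessary sufficient averaging omega_z series}. The substitution $x=1/z$ translates the $\omega_z$ parameterization used by the earlier results into the $0<x<1$ parameterization of (\ref{nolevel}).

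For necessity, suppose statement \ref{NS3} holds with constant $C$. Theorem \ref{necessary series} yields
\[
\sup_{z>1}\sup_{A\in\mathcal{A}}\frac{\|A\omega_z\|_{q/2,u}}{\|\omega_z\|_{p/2,v}}\le c^2C^2.
\]
Specialising to $A=I$, using the trivial estimate $\|\omega_z\|_{q/2,u}^{q/2}\ge z^{-q}\int_0^z u(t)\,dt$, together with the identity
\[
\|\omega_z\|_{p/2,v}^{p/2}=z^{-p}\bigl(x^{-p}\textstyle\int_0^x w(t)\,dt+\int_x^\infty w(t)\,dt/t^p\bigr)
\]
at $x=1/z$ already established in the proof of Theorem \ref{NSq=2}, I obtain (\ref{nolevel}) $\le cC<\infty$.

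For sufficiency, assume (\ref{nolevel}) is finite. The crux is that the $B$-hypothesis makes averaging essentially bounded on decreasing functions in $\Lambda_{q/2}(u)$: for any decreasing $h$,
\[
\sup_{A\in\mathcal{A}}\|Ah\|_{q/2,u}=\|h\|_{\Theta_{q/2}(u)}\le c_u\|h\|_{q/2,u},
\]
where the equation is Corollary 2.4 of \cite{FTLS} (valid when $q\ge 2$), and the inequality follows from $\|\cdot\|_\Theta\le\|\cdot\|_\Gamma\le c\|\cdot\|_\Lambda$ via (\ref{LTG}) and $B_{q/2}$ (by \cite{AM}) when $q>2$, and directly from $\|\cdot\|_{\Lambda_1(u)}\approx\|\cdot\|_{\Theta_1(u)}$ under $B_{1,\infty}$ as noted in the introduction (Lemmas 2.2 and 2.5 of \cite{FTLS}) when $q=2$. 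Inserting this into the equation of Proposition \ref{CC}\ref{TA} and then applying Corollary \ref{maligranda series fourier} (valid since $p\le q$) gives
\[
C_\Theta\le c_u\sup_{h\in P\cap\Omega_{2,0}}\frac{\|h\|_{q/2,u}}{\|h\|_{p/2,v}}\le 2^{2/q}c_u\sup_{z>1}\frac{\|\omega_z\|_{q/2,u}}{\|\omega_z\|_{p/2,v}}.
\]
The tail bound $\|\omega_z\|_{q/2,u}^{q/2}\approx z^{-q}\int_0^z u$ under the $B$-hypothesis then identifies this supremum with a constant multiple of (\ref{nolevel}) squared. For $q>2$ this is one line with $B_{q/2}$: $\int_z^\infty u(t)t^{-q}\,dt\le z^{-q/2}\int_z^\infty u(t)t^{-q/2}\,dt\le b_{q/2}(u)z^{-q}\int_0^z u$. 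For $q=2$ it comes from integration by parts on $\int_z^\infty u(t)/t^2\,dt$, using $U(t)/t\le b_1(u)U(z)/z$ and the decay $U(t)/t^2\to 0$ this forces. Proposition \ref{sufficient series} now delivers statement \ref{NS1}, closing the cycle \ref{NS1} $\Rightarrow$ \ref{NS2} $\Rightarrow$ \ref{NS3} $\Rightarrow$ (\ref{nolevel}) $\Rightarrow$ \ref{NS1}. I expect the $q=2$ tail estimate to be the main technical nuisance, since $B_{1,\infty}$ does not directly control a $1/t^2$-weighted tail of $u$.
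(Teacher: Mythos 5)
Your proposal is correct, and for $q>2$ it is essentially the paper's own argument: necessity via Theorem \ref{necessary series} with $A$ the identity and the lower bound $\omega_z\ge z^{-2}\chi_{(0,z)}$, sufficiency via $\|h\|_{\Theta_{q/2}(u)}\le c\|h\|_{q/2,u}$ from $B_{q/2}$, Corollary \ref{maligranda series fourier}, the one-line tail estimate, and Proposition \ref{sufficient series}. The only genuine divergence is at $q=2$: the paper does not rerun this machinery but instead observes that $B_{1,\infty}$ makes $C_{xy}$ comparable to (\ref{nolevel}) and then quotes Theorem \ref{NSq=2} / Theorem \ref{necessary sufficient averaging omega_z series}, whereas you treat $q=2$ uniformly with $q>2$, replacing the $B_{q/2}$ tail bound by an integration by parts on $\int_z^\infty u(t)t^{-2}\,dt$ using $U(t)/t\le b_1(u)U(z)/z$ (which does yield $\int_z^\infty u(t)t^{-2}\,dt\le 2b_1(u)z^{-2}\int_0^z u$, so the step you flag as the technical nuisance goes through). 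Your route is slightly longer at $q=2$ but self-contained and uniform in $q$; the paper's is shorter because Theorem \ref{NSq=2} has already absorbed the work. Both are valid, and your observation that the necessity half needs no hypothesis on $u$ matches the paper.
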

\begin{proof} First suppose $q=2$. The $B_{1,\infty}$ condition on $u$ shows that there exists a constant $b_1(u)$ such that for each $x$,
\[
\int_0^{1/x}u(t)\,dt\le \sup_{1<xy}\frac1{xy}\int_0^yu(t)\,dt\le b_1(u)\int_0^{1/x}u(t)\,dt
\]
so $C_{xy}<\infty$ if and only if (\ref{nolevel}) is finite. Since $p\le q=2$, Theorem \ref{necessary sufficient averaging omega_z series} completes the proof.

Now consider the case $q>2$ and suppose that (\ref{nolevel}) is finite. Since $u\in B_{q/2}$ there exists a constant $c$ such that inequality (\ref{AMH}) holds. This and (\ref{LTG}) give, for any decreasing $h$,
\[
\|h\|_{\Theta_{q/2}(u)}\le\|h\|_{\Gamma_{q/2}(u)}\le c\|h\|_{\Lambda_{q/2}(u)}=c\|h\|_{q/2,u}.
\]
Therefore
\[
C_\Theta\le c\sup_{h\in P\cap\Omega_{2,0}}\frac{\|h\|_{q/2,u}}{\|h\|_{p/2,v}}
\le c2^{2/q}\sup_{z>1}\frac{\|\omega_z\|_{q/2,u}}{\|\omega_z\|_{p/2,v}},
\]
where the second inequality is from Corollary \ref{maligranda series fourier}.

But
\[
\int_z^\infty u(t)\,\frac{dt}{t^q}\le\frac1{z^{q/2}}\int_z^\infty u(t)\,\frac{dt}{t^{q/2}}
\le\frac {b_{q/2}(u)}{z^q}\int_0^zu(t)\,dt,
\]
so
\[
\|\omega_z\|_{q/2,u}
=\bigg(\frac1{z^q}\int_0^zu(t)\,dt+\int_z^\infty u(t)\,\frac{dt}{t^q}\bigg)^{q/2}
\le(1+b_{q/2}(u))^{2/q}\bigg( \frac1{z^q}\int_0^zu(t)\,dt\bigg)^{2/q}.
\]
Therefore, $C_\Theta$ is bounded above by a multiple of,
\[
\sup_{z>1}{\bigg(\frac 1{z^q}\int_0^zu(t)\,dt\bigg)^{2/q}}
{\bigg(\frac 1{z^p}\int_0^zv(t)\,dt+\int_z^\infty v(t)\,\frac{dt}{t^p}\bigg)^{-2/p}}.
\]
Using $w(t)=t^{p-2}v(1/t)$ and letting $z=1/x$ we see that the last expression is equal to the square of (\ref{nolevel}) and therefore $C_\Theta<\infty$. Proposition \ref{sufficient series} shows that Part \ref{NS1} of Theorem \ref{necessary sufficient averaging omega_z series} holds. For any $p$ and $q$, Part \ref{NS1} implies Part \ref{NS2}.

For the converse, still in the case $q>2$, suppose the Fourier inequality, 
\[
\|\hat f\|_{\Gamma_q(u)} \le C \|f\|_{\Gamma_p(w)},\quad f\in L^1(\mathbb{T}).
\]
holds. That is, Part \ref{NS2} of Theorem \ref{necessary sufficient averaging omega_z series} holds.  Then (\ref{LTG}) shows that Part \ref{NS3} also holds, so we may apply Theorem \ref{necessary series}. Since the identity operator is in $\mathcal A$, we have,
\[
\sup_{z>1}\frac{\|z^{-2}\chi_{(0,z)}\|_{q/2,u}}{\|\omega_z\|_{p/2,v}}
\le\sup_{z>1}\frac{\|\omega_z\|_{q/2,u}}{\|\omega_z\|_{p/2,v}}
\le\sup_{z>1}\sup_{A\in\mathcal A}\frac{\|A\omega_z\|_{q/2,u}}{\|\omega_z\|_{p/2,v}}<\infty.
\]
Using $w(t)=t^{p-2}v(1/t)$ and letting $z=1/x$, we see that (\ref{nolevel}) is finite. This completes the proof.
\end{proof}

One consequence of this theorem is an analogue for the Fourier coefficient map of Theorem 2 in \cite{BH2}, a result for the Fourier transform. In the original result, $u$ was assumed to be decreasing. In this analogue we have weakened this condition; a decreasing function is in both $B_{q/2}$ and $B_{1,\infty}$.
\begin{thm}\label{heinig lorentz sufficient series} Let $u$ and $w$ be weight functions on $(0,\infty)$.
\begin{enumerate}
\item\label{BHA1} Suppose $1<p\le q<\infty$, $q\ge 2$, and $w\in B_p$. Also suppose that $u\in B_{q/2}$ if $q>2$, and $u\in B_{1,\infty}$ if $q=2$. If
\begin{equation}\label{BHC}
\sup_{0<x<1}x\bigg(\int_0^{1/x} u(t)\,dt \bigg)^{1/q} \bigg( \int_0^x w(t)\,dt \bigg)^{-1/p} < \infty
\end{equation}
then there is a $C > 0$ such that 
\begin{equation}\label{BHA}
\|\hat f\|_{\Lambda_q(u)} \le C \|f\|_{\Lambda_p(w)},\quad f\in L^1(\mathbb{T}).
\end{equation}
\item\label{BHA2} Conversely, if (\ref{BHA}) is satisfied for any weight functions $u$ and $w$ on $(0,\infty)$ and for 
$1<p,q<\infty$, then (\ref{BHC}) holds. 
\end{enumerate}
\end{thm}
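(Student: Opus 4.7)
The plan is to derive both implications from the preceding machinery of Section \ref{WC}: Proposition \ref{NSunice} supplies the sufficiency for Part \ref{BHA1}, and Corollary \ref{weak necessary series} supplies the necessity for Part \ref{BHA2}.

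For Part \ref{BHA1}, the key observation is that the Muckenhoupt-type expression (\ref{BHC}) dominates the weight quantity (\ref{nolevel}) appearing in Proposition \ref{NSunice}. Indeed, dropping the non-negative tail $\int_x^\infty w(t)\,t^{-p}\,dt$ from the denominator of (\ref{nolevel}) yields
\[
\bigg(x^{-p}\int_0^x w(t)\,dt+\int_x^\infty w(t)\,\frac{dt}{t^p}\bigg)^{-1/p}\le x\bigg(\int_0^x w(t)\,dt\bigg)^{-1/p},
\]
so (\ref{nolevel}) is bounded by (\ref{BHC}), and therefore is finite under our hypothesis. The assumptions on $u$ match those of Proposition \ref{NSunice} exactly, so that proposition yields the stronger inequality $\|\hat f\|_{\Gamma_q(u)}\le C\|f\|_{\Gamma_p(w)}$ for all $f\in L^1(\mathbb{T})$. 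To pass from this $\Gamma$--$\Gamma$ estimate to (\ref{BHA}), use (\ref{LTG}) on the left to obtain $\|\hat f\|_{\Lambda_q(u)}\le\|\hat f\|_{\Gamma_q(u)}$, and invoke the $B_p$ equivalence (\ref{AMH}) on the right, which is available because $w\in B_p$ with $p>1$. Composing these estimates gives (\ref{BHA}).

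For Part \ref{BHA2}, I would apply Corollary \ref{weak necessary series} directly to the assumed Fourier inequality (\ref{BHA}); this is legitimate since $0<p,q<\infty$. Taking $A$ to be the identity (the element of $\mathcal{A}$ corresponding to the empty collection of intervals), the resulting estimate is
\[
\sup_{z>1}\frac{\|\omega_z\|_{q/2,u}}{\|\chi_{(0,1/z)}\|_{p/2,w}}<\infty.
\]
Since $\omega_z(t)=\min(t^{-2},z^{-2})\ge z^{-2}\chi_{(0,z)}(t)$, we have $\|\omega_z\|_{q/2,u}\ge z^{-2}\bigl(\int_0^z u\bigr)^{2/q}$, while $\|\chi_{(0,1/z)}\|_{p/2,w}=\bigl(\int_0^{1/z} w\bigr)^{2/p}$. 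Taking the $\tfrac12$-power of the displayed inequality and substituting $x=1/z$ converts the supremum over $z>1$ into the supremum over $0<x<1$ in (\ref{BHC}).

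I do not expect any substantial obstacle, since both implications have essentially been absorbed into the weight characterization already built. The only mild subtlety is the asymmetry between (\ref{BHC}) and (\ref{nolevel}): one direction of their comparison is trivial, as above, while the reverse direction would require $w\in B_p$ to control the tail $\int_x^\infty w(t)\,t^{-p}\,dt$ by $x^{-p}\int_0^x w$. This explains precisely why $B_p$ is needed as a hypothesis in Part \ref{BHA1} but plays no role in Part \ref{BHA2}.
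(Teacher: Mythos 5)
Your proof is correct and follows essentially the same route as the paper: Part \ref{BHA1} via Proposition \ref{NSunice} (comparing (\ref{BHC}) with (\ref{nolevel}) and using $w\in B_p$ to pass between $\Lambda_p(w)$ and $\Gamma_p(w)$), and Part \ref{BHA2} via Corollary \ref{weak necessary series} with $A$ the identity and the bound $z^{-2}\chi_{(0,z)}\le\omega_z$. Your observation that only the trivial direction of the comparison between (\ref{BHC}) and (\ref{nolevel}) is needed for sufficiency is a slightly cleaner bookkeeping of where $B_p$ enters, but it is not a different argument.
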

\begin{proof} As always, $v(t) =t^{p-2}w(1/t)$.
For Part \ref{BHA1}, since $w\in B_p$, $\|\cdot\|_{\Lambda_p(w)}$ and $\|\cdot\|_{\Gamma_p(w)}$ are equivalent norms. Therefore (\ref{BHA}) is equivalent to Theorem \ref{necessary sufficient averaging omega_z series}\ref{NS3}. A routine calculation using $w\in B_p$ shows that (\ref{BHC}) is equivalent to the finiteness of (\ref{nolevel}). Now Proposition \ref{NSunice} completes the proof of Part \ref{BHA1}. 

For part \ref{BHA2}, apply Corollary \ref{weak necessary series}, taking $A$ to be the identity operator, to get,
\[
\sup_{z>1}\frac{\|\omega_z\|_{q/2,u}}{\|\chi_{(0,1/z)}\|_{p/2,w}}<\infty.
\]
Since $z^{-2}\chi_{(0,z)}\le\omega_z$, this implies
\[
\sup_{z>1}\left(z^{-q}\int_0^zu(t)\,dt\right)^{q/2}
\left(\int_0^{1/z}w(t)\,dt\right)^{-2/p}<\infty.
\]
Replacing $z$ by $1/x$ and taking square roots proves (\ref{BHC}).
\end{proof}

As a final result for this section we show the sufficiency, for the Fourier coefficient map on Lorentz spaces, of a weight condition analogous to one used in Theorem 1 of \cite{BH2}, a result for the Fourier transform on Lebesgue spaces. 

\begin{thm} Let $1<p\le q <\infty$ and $2\le q$. Assume $u$ and $w$ are weight functions on $(0,\infty)$. If 
\[
\sup_{0<x<1} \ \bigg(\int_0^{1/x} u^o(t)\,dt \bigg)^{1/q} \bigg( \int_0^x w(t)^{1-p'}\,dt \bigg)^{1/p'} < \infty
\]
then there exists $C>0$ such that
\[
\|\hat{f}\|_{\Lambda_q(u)} \le C \|f\|_{\Lambda_p(w)}
\]
for all $f\in L^1(\mathbb{T})$.
\end{thm}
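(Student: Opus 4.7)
The plan is to reduce the Fourier inequality to a classical weighted Hardy inequality. Since $\|\hat f\|_{\Lambda_q(u)}\le\|\hat f\|_{\Gamma_q(u)}$ by (\ref{LTG}), it suffices to prove the stronger bound $\|\hat f\|_{\Gamma_q(u)}\le C\|f\|_{\Lambda_p(w)}$, following the template of Proposition \ref{sufficient series}, but replacing the quasi-concave Maligranda-type step by a direct application of Muckenhoupt's Hardy inequality so that $f^*$ rather than $f^{**}$ appears on the right.

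I would first apply Proposition \ref{Jodeit-Torchinsky} to the Fourier coefficient map (so $D_T=8$) to obtain $\varphi_f^{**}\le h_f^{**}$ with $\varphi_f(t)=\tfrac{1}{8}(\hat f)^{**}(t)^2$ and $h_f(t)=\bigl(\int_0^{1/t}f^*(s)\,ds\bigr)^2$. Since $\varphi_f$ is decreasing, the definition of $\|\cdot\|_{\Theta_{q/2}(u)}$ yields $\|\varphi_f\|_{q/2,u}\le\|h_f\|_{\Theta_{q/2}(u)}$, and the hypothesis $q\ge2$ combined with (\ref{Cor24}) gives $\|h_f\|_{\Theta_{q/2}(u)}\le\|h_f\|_{q/2,u^o}$ (note that $h_f$ is decreasing). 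Reassembling these estimates produces
\[
\|\hat f\|_{\Gamma_q(u)}^2 \le 8\bigg(\int_0^\infty\bigg(\int_0^{1/t}f^*(s)\,ds\bigg)^q u^o(t)\,dt\bigg)^{2/q}.
\]

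Next, the substitution $t=1/x$ in the outer integral reduces the remaining task to a weighted Hardy estimate of the form
\[
\bigg(\int_0^\infty\bigg(\int_0^x g(s)\,ds\bigg)^q \tilde U(x)\,dx\bigg)^{1/q}\le C\bigg(\int_0^1 g(s)^p w(s)\,ds\bigg)^{1/p},
\]
where $g=f^*$ is supported in $(0,1)$ and $\tilde U(x)=u^o(1/x)/x^2$. Muckenhoupt's classical characterization, valid for $1<p\le q<\infty$ and functions supported in $(0,1)$, asserts that this Hardy inequality holds precisely when
\[
\sup_{0<r\le1}\bigg(\int_r^\infty \tilde U(x)\,dx\bigg)^{1/q}\bigg(\int_0^r w(s)^{1-p'}\,ds\bigg)^{1/p'}<\infty.
\]
Another change of variable $s=1/x$ converts $\int_r^\infty \tilde U(x)\,dx$ into $\int_0^{1/r}u^o(s)\,ds$, so this is exactly the hypothesis of the theorem (extended by continuity from $r\in(0,1)$ to $r=1$).

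The main technical point is verifying that the Muckenhoupt supremum, which a priori runs over $r\in(0,\infty)$, may be restricted to $r\in(0,1]$ when $g$ is supported in $(0,1)$. This is because, when $g$ is extended by zero outside $(0,1)$, the effective weight $w$ on the right is infinite outside $(0,1)$, so $w^{1-p'}$ vanishes there and $\int_0^r w(s)^{1-p'}\,ds=\int_0^{\min(r,1)} w(s)^{1-p'}\,ds$; consequently the Muckenhoupt quantity is monotone decreasing for $r\ge 1$, and its supremum is attained in $(0,1]$.
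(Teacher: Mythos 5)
Your proof is correct, and its overall architecture coincides with the paper's: both reduce the problem to the intermediate estimate
\[
\|\hat f\|_{\Gamma_q(u)}\le \sqrt{8}\,\bigg(\int_0^\infty\bigg(\int_0^{1/t}f^*(s)\,ds\bigg)^qu^o(t)\,dt\bigg)^{1/q},
\]
and then finish with a weighted Hardy inequality for $f^*$ on $(0,1)$ together with a H\"older estimate that accounts for the part of the left-hand integral beyond the support of $f^*$ --- that is exactly the role played by your restriction of the Muckenhoupt supremum to $r\in(0,1]$, and by the paper's explicit split of $\|f\|_{\Gamma_q(\sigma)}^q$ into $\int_0^1+\int_1^\infty$. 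The difference is in how the intermediate estimate is reached. The paper sets $\sigma(t)=t^{q-2}u^o(1/t)$, checks that the relevant supremum is at most $1$, and then invokes Proposition \ref{CC}\ref{Tu} together with Proposition \ref{sufficient series} (with $p$ replaced by $q$ and $w$ by $\sigma$) to get $\|\hat f\|_{\Gamma_q(u)}\le\bar C\|f\|_{\Gamma_q(\sigma)}$; this routes through the quasi-concave machinery of Section \ref{QC}. You obtain the same inequality --- note that $\|f\|_{\Gamma_q(\sigma)}^{q}=\|h_f\|_{q/2,u^o}^{q/2}$ after the substitution $t\mapsto 1/t$ --- more directly and with a cleaner constant, by applying Proposition \ref{Jodeit-Torchinsky} and the bound $\|h\|_{\Theta_{q/2}(u)}\le\|h\|_{q/2,u^o}$ of (\ref{Cor24}) to the decreasing function $h_f$; since both sides carry the same exponent $q$ at that stage, the Maligranda-type reduction to the kernels $\omega_z$ is indeed unnecessary. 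Your shortcut is legitimate and slightly more elementary; what the paper's longer route buys is uniformity with the other sufficiency results of Section \ref{WC}, all of which are phrased as bounds on $C_\Theta$.
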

\begin{proof} Let $\sigma(t)=t^{q-2}u^o(1/t)$ so that 
\[\int_0^{1/x} u^o(t)\,dt=\int_x^\infty \sigma(t)\,\frac{dt}{t^q}
\] 
and hence, 
\[
\sup_{0<x<1}\bigg(\int_0^{1/x}u^o(t)\,dt\bigg)^{1/q}\bigg(x^{-q}\int_0^x\sigma(t)\,dt+\int_x^\infty \sigma(t)\,\frac{dt}{t^q}\bigg)^{-1/q}\le1.
\]
In view of (\ref{levelbigger}) we may apply Propositions \ref{CC}\ref{Tu} and \ref{sufficient series}, in the case $p=q$ and $w=\sigma$, to see that 
\[
\|\hat f\|_{\Gamma_q(u)}\le \bar C\|f\|_{\Gamma_q(\sigma)}, \quad f\in L^1(\mathbb{T}),
\]
for some $\bar C<\infty$.

Let $B$ be the supremum in the hypothesis of the theorem. Then,
\[
\sup_{0<x<1} \bigg(\int_x^1 \sigma(t)\,\frac{dt}{t^q} \bigg)^{1/q} \bigg( \int_0^x w(t)^{1-p'}\,dt \bigg)^{1/p'}\le B<\infty.
\]
By Theorem 1 in \cite{B}, there exists a finite constant $c$ such that the weighted Hardy inequality,
\[
\bigg(\int_0^1 \bigg(\frac1t\int_0^tg(s)\,ds\bigg)^q \sigma(t)\,dt\bigg)^{1/q}
\le c\bigg(\int_0^1 g(t)^pw(t)\,dt\bigg)^{1/p},
\]
holds for all $g\ge0$. 

If $f\in L^1(\mathbb{T})$ then $f^*$ is zero on $(1,\infty)$. Therefore,
\[
\|f\|_{\Gamma_q(\sigma)}^q=\int_0^1\bigg(\frac1t\int_0^tf^*(s)\,ds\bigg)^q\sigma(t)\,dt
+\bigg(\int_0^1 f^*(s)\,ds\bigg)^q\int_1^\infty\sigma(t)\,\frac{dt}{t^q}.
\]
The weighted Hardy inequality above, with $g=f^*$, shows that the first term is bounded above by,
\[
c^q\bigg(\int_0^1 f^*(t)^pw(t)\,dt\bigg)^{q/p}=c^q\|f\|_{\Lambda_p(w)}^q.
\]
H\"older's inequality shows that the second term is bounded above by
\[
\bigg(\int_0^1 f^*(t)^pw(t)\,dt\bigg)^{q/p}\bigg(\int_0^1 w(t)^{1-p'}\,dt\bigg)^{q/p'}\int_1^\infty\sigma(t)\,\frac{dt}{t^q}
\le B^q\|f\|_{\Lambda_p(w)}^q.
\]
Putting these together, we have,
\[
\|\hat f\|_{\Lambda_q(u)}\le \|\hat f\|_{\Gamma_q(u)}\le \bar C\|f\|_{\Gamma_q(\sigma)}\le (c^q+B^q)^{1/q}\bar C\|f\|_{\Lambda_q(w)}.
\]
This completes the proof.
\end{proof}

\section{Applications}\label{App}

The space $L\log L$ consists of functions $f$ defined on $[0,1]$ for which the integral of $|f|\log (2+|f|)$ is finite. It is well known that the space coincides with the Lorentz space $\Lambda_1(1-\log(t))=\Gamma_1(1)$. See, for example, Corollary 10.2 in \cite{BR}. 

Taking $p=1$ and $w=1$ in Proposition \ref{CC}\ref{Tu} and Theorem \ref{NSq=2} gives a description of spaces that contain the Fourier series of all functions in $L\log L$.
\begin{thm} Suppose $2\le q<\infty$ and $u$ is a weight. If
\begin{equation}\label{LlogLCond}
\sup_{z>1}\frac z{(1+\log z)^q}\sup_{y>z}\frac1y\int_0^y u(t)\,dt<\infty
\end{equation}
then $\mathcal F:L\log L\to\Gamma_q(u)$ and hence $\mathcal F:L\log L\to\Lambda_q(u)$. When $q=2$, condition (\ref{LlogLCond}) is also necessary.
\end{thm}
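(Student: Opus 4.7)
The plan is to apply Proposition~\ref{CC}\ref{Tu} (for sufficiency) and Theorem~\ref{NSq=2} (for necessity when $q=2$) with $p=1$ and with $w=\chi_{(0,1)}$, which is the sense in which ``$w=1$'' gives $\Gamma_1(w)=L\log L$: the remark at the start of Section~\ref{WC} shows that both the Fourier inequality and the weight conditions depend on $w\chi_{(1,\infty)}$ only via $\int_1^\infty w(t)\,dt/t$, so we may take this tail contribution to be zero.

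For sufficiency I would first compute, for $0<x<1$,
\[
\int_0^\infty\omega_x(t)^{1/2}w(t)\,dt=\int_0^x\frac{dt}{x}+\int_x^1\frac{dt}{t}=1-\log x,
\]
and then, substituting $z=1/x$, read off from Proposition~\ref{CC}\ref{Tu} the estimate
\[
C_\Theta\le(4q')^{2/q}\bigg[\sup_{z>1}\frac{z}{(1+\log z)^q}\sup_{y>z}\frac{1}{y}\int_0^y u(t)\,dt\bigg]^{2/q}.
\]
Hypothesis (\ref{LlogLCond}) therefore forces $C_\Theta<\infty$, so Proposition~\ref{sufficient series} delivers $\|\hat f\|_{\Gamma_q(u)}\le C\|f\|_{\Gamma_1(w)}$, i.e.\ $\mathcal F:L\log L\to\Gamma_q(u)$. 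The $\Lambda_q(u)$ conclusion follows at once from (\ref{LTG}).

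For the necessity when $q=2$, I would argue from Theorem~\ref{NSq=2}. The same calculation gives, for $0<x<1$,
\[
x^{-1}\int_0^x w(t)\,dt+\int_x^\infty w(t)\frac{dt}{t}=1-\log x,
\]
so with $z=1/x$ the quantity $C_{xy}$ defined in Theorem~\ref{NSq=2} satisfies
\[
C_{xy}^2=\sup_{z>1}\frac{z}{(1+\log z)^2}\sup_{y>z}\frac{1}{y}\int_0^y u(t)\,dt,
\]
which is precisely the supremum in (\ref{LlogLCond}) at $q=2$. Boundedness of $\mathcal F:L\log L\to\Gamma_2(u)$ therefore forces $C_{xy}<\infty$ by the lower bound $C_{xy}\le 549 C$ of Theorem~\ref{NSq=2}.

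The main obstacle is conceptual rather than computational: one must correctly interpret the identification $L\log L=\Gamma_1(1)$ so that the general theorems can be applied, and verify that the tail of $w$ beyond $(0,1)$ affects none of the quantities involved. Once $w=\chi_{(0,1)}$ is fixed, both halves reduce to the single identity $1-\log(1/z)=1+\log z$ together with direct substitution into the already-proved results.
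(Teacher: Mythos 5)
Your proof is correct and follows exactly the route the paper takes: the paper's entire argument is ``take $p=1$ and $w=1$ (i.e.\ $w=\chi_{(0,1)}$, so that $\Gamma_1(w)=L\log L$) in Proposition~\ref{CC}\ref{Tu} and Theorem~\ref{NSq=2},'' and you have simply carried out the computation $\int_0^\infty\omega_x^{1/2}w=1-\log x$ and the substitution $z=1/x$ that this entails.
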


\begin{rem} If $u$ is decreasing, or satisfies the weaker condition $u\in B_{1,\infty}$ then $\sup_{y>z}\frac1y\int_0^y u(t)\,dt$ may be replaced by $\frac1z\int_0^z u(t)\,dt$ in the previous theorem.
\end{rem}
\def\L{L}\def\l{l}\def\k{(1+|\log t|)}

But $L\log L$ is only one of a large class of Lorentz spaces known as Lorentz-Zygmund spaces. One can define Lorentz-Zygmund spaces for functions on any $\sigma$-finite measure space by letting,
\[
\|f\|_{\mathcal L^{r,p}(\log\mathcal L)^\alpha}
=\begin{cases}\left(\int_0^\infty\big[t^{1/r}\k^\alpha f^*(t)\big]^p\,\frac{dt}t\right)^{1/p},
&0<p<\infty,\\
\sup_{0<t<\infty}t^{1/r}\k^\alpha f^*(t),&p=\infty,\end{cases}
\]
and setting $\mathcal L^{r,p}(\log\mathcal L)^\alpha=\{f:\|f\|_{\mathcal L^{r,p}(\log\mathcal L)^\alpha}<\infty\}$. When the underlying measure is Lebesgue measure on $[0,1]$ we write $L^{r,p}(\log L)^\alpha$ and when the underlying measure is counting measure on $\Bbb Z$ we write $\ell^{r,p}(\log\ell)^\alpha$.

The Lorentz-Zygmund spaces were introduced and studied in \cite{BR}. They are special cases of the Lorentz $\Lambda_p(w)$-spaces. Specifically, $L^{r,p}(\log L)^\alpha=\Lambda_p(w)$, where 
\[
w(t)=t^{p/r-1}(1-\log t)^{\alpha p}\chi_{(0,1)}(t),
\]
and $\ell^{r,p}(\log\ell)^\alpha=\Lambda_p(u)$, where 
\[
u(t)=t^{p/r-1}(1+|\log t|)^{\alpha p}\chi_{(0,\infty)}(t).
\]

Interpolation theory provides a powerful approach to finding conditions on Lorentz-Zygmund indices that are sufficient to imply boundedness of the Fourier series map between Lorentz-Zygmund spaces. This is done in \cite{BR} and elsewhere. For example, if $1<p<\infty$, $\beta\in \mathbb R$, and $1<r<2$, the method of Remark \ref{example} may be applied with $u(t)=t^{p/r'-1}(1+|\log(t)|)^{q\beta}$ to show that the Fourier transform is bounded from $L^{r,p}(\log L)^{\beta}$ to $\ell^{r',p}(\log\ell)^\beta$. We omit the details.

Our next result complements these by providing necessary conditions. 

Observe that $L^{r,p}(\log L)^\alpha\ne\{0\}$ if and only if $t^{p/r-1}(1-\log t)^\alpha$ is integrable near zero. To avoid the trivial case we assume that 
\begin{equation}\label{*}
r<\infty,\text{ or }\big\{r=\infty,\ p<\infty,\ \alpha p<-1\big\},\text{ or }\big\{r=\infty,\ p=\infty,\ \alpha\le 0\big\}.
\end{equation}

\begin{thm} Suppose $p,q\in (0,\infty)$, $r,s\in(0,\infty]$, $\alpha,\beta\in(-\infty,\infty)$ and (\ref{*}) holds. If $\mathcal F:L^{r,p}(\log L)^\alpha\to \ell^{s,q}(\log \ell)^\beta$
then either $s>2$, or $s=2$ and $\beta\le0$. Also, either $1/r+1/s<1$, or  $1/r+1/s=1$ and $\beta\le \alpha$.
\end{thm}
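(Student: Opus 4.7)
The plan is to translate the mapping hypothesis into the weighted Lorentz-space Fourier inequality and apply Corollary \ref{weak necessary series}. Under the identifications given just before the theorem, one has $L^{r,p}(\log L)^\alpha=\Lambda_p(w)$ and $\ell^{s,q}(\log\ell)^\beta=\Lambda_q(u)$ with
\[
w(t)=t^{p/r-1}(1+|\log t|)^{p\alpha}\chi_{(0,1)}(t),\qquad u(t)=t^{q/s-1}(1+|\log t|)^{q\beta}
\]
(with the usual modifications when $r=\infty$ or $s=\infty$), so the hypothesis becomes $\|\hat f\|_{\Lambda_q(u)}\le C\|f\|_{\Lambda_p(w)}$ for every $f\in L^1(\mathbb{T})$. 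Corollary \ref{weak necessary series} then yields
\[
\sup_{z>1}\sup_{A\in\mathcal A}\frac{\|A\omega_z\|_{q/2,u}}{\|\chi_{(0,1/z)}\|_{p/2,w}}<\infty,
\]
from which each of the two conclusions will be extracted by a different choice of $A$ and $z$.

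For the first conclusion I fix $z>1$ and, for each $R>2z$, let $A_R\in\mathcal A$ be the averaging operator with the single interval $(z,R)$. Since $A_R\omega_z\equiv 1/(zR)$ on $(z,R)$, the elementary estimate
\[
\|A_R\omega_z\|_{q/2,u}^{q/2}\ge(zR)^{-q/2}\int_{R/2}^R t^{q/s-1}(1+\log t)^{q\beta}\,dt\ge c\,z^{-q/2}R^{q/s-q/2}(1+\log R)^{q\beta}
\]
holds for $s<\infty$ with some positive constant $c=c(s,\beta)$. Because $\|\chi_{(0,1/z)}\|_{p/2,w}$ is finite and independent of $R$, boundedness of the ratio as $R\to\infty$ forces $q/s-q/2\le 0$, i.e.\ $s\ge2$, and in the equality case $s=2$ it further forces $\beta\le 0$. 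When $s=\infty$ the first conclusion is automatic.

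For the second conclusion I take $A$ to be the identity and let $z\to\infty$. Routine asymptotic analysis of the defining integrals, using $s\ge2$ from the first conclusion, yields, for finite $r$ and $s$,
\[
\|\omega_z\|_{q/2,u}\approx z^{2/s-2}(1+\log z)^{2\beta},\qquad \|\chi_{(0,1/z)}\|_{p/2,w}\approx z^{-2/r}(1+\log z)^{2\alpha},
\]
so boundedness of the ratio $z^{2/s-2+2/r}(1+\log z)^{2\beta-2\alpha}$ as $z\to\infty$ forces either $1/r+1/s<1$, or $1/r+1/s=1$ together with $\beta\le\alpha$. The two boundary cases are handled by the same ratio test with modified asymptotics: when $r=\infty$, \eqref{*} forces $\alpha p+1<0$ and $\|\chi_{(0,1/z)}\|_{p/2,w}\approx(1+\log z)^{2\alpha+2/p}$, which combined with $s\ge2$ gives $1/r+1/s<1$ strictly; when $s=\infty$, nontriviality of the codomain forces $q\beta<-1$ and $\|\omega_z\|_{q/2,u}\approx z^{-2}$, so the ratio test yields $r\ge1$ with $\alpha\ge0$ when $r=1$, and then $\beta<-1/q<0\le\alpha$ recovers $\beta\le\alpha$. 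The main obstacle is the bookkeeping of these asymptotic integrals when one of the four parameters sits at a critical value, but every such subcase reduces to the same ratio test described here.
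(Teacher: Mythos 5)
Your proposal is correct and follows essentially the same route as the paper: translate the hypothesis into $\|\hat f\|_{\Lambda_q(u)}\le C\|f\|_{\Lambda_p(w)}$, invoke Corollary \ref{weak necessary series}, then test the resulting condition with a single-interval averaging operator (letting the interval grow) for the first conclusion and with the identity operator (letting $z\to\infty$) for the second. The only differences are cosmetic: the paper averages over $(0,y)$ where you average over $(z,R)$, and it replaces your direct asymptotic computation of the weight integrals by a derivative estimate plus L'Hospital's rule.
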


\begin{proof} The hypothesis may be stated as, $\|\hat f\|_{\Lambda_q(u)}\le C\|f\|_{\Lambda_p(w)}$ for $f\in L^1(\mathbb{T})$, where
\[
w(t)=t^{p/r-1}(1-\log t)^{\alpha p}\chi_{(0,1)}(t)
\quad\text{and}\quad
u(t)=t^{q/s-1}(1+|\log t|)^{\beta q}\chi_{(0,\infty)}(t).
\]
So Corollary 3.8 implies,
\begin{equation}\label{**}
\sup_{z>1}\frac{\sup_{A\in\mathcal A}\|A\omega_z\|_{q/2,u}}{\left(\int_0^{1/z}w(t)\,dt\right)^{2/p}}<\infty.
\end{equation}
Since (\ref*) holds, the denominator is finite for each $z>1$ and it follows that the numerator is finite as well. In particular, 
$\lim_{y\to\infty}\|A_y\omega_z\|_{q/2,u}<\infty$ where,
for $y>z$, $A_y$ is the averaging operator based on the single interval $(0,y)$. For this operator,
\[
A_y\omega_z(t)
=\frac1y\left(\frac2z-\frac1y\right)\chi_{(0,y)}(t)+\frac1{t^2}\chi_{(y,\infty)}(t)
\ge\frac1{yz}\chi_{(0,y)}(t),
\]
so
\[
\lim_{y\to\infty}\left(\frac1{y^{q/2}}\int_0^y t^{q/s-1}(1+|\log t|)^{\beta q}\,dt\right)^{2/q}<\infty.
\]
From this we conclude that either $s>2$, or $s=2$ and $\beta\le0$. This proves the first conclusion of the theorem. 

For the second conclusion, take $A$ to be the identity operator in (\ref{**}). Since $z^{-2}\chi_{(0,z)}(t)\le\omega_z(t)$, 
\[
\sup_{z>1}\frac1{z^2}\left(\int_0^zt^{q/s-1}(1+|\log t|)^{\beta q}\,dt\right)^{2/q}
\left(\int_0^{1/z}t^{p/r-1}(1-\log t)^{\alpha p}\,dt\right)^{-2/p}<\infty.
\]
But an easy estimate gives,
\[
\frac d{dt}(t^{q/s}(1+|\log t|)^{\beta q})
\le t^{q/s-1}(1+|\log t|)^{\beta q}(q/s+|\beta|q)
\]
so
\[
(q/s+|\beta|q)\int_0^{z}t^{q/s-1}(1+|\log t|)^{\beta q}\,dt
\ge z^{q/s}(1+|\log z|)^{\beta q}.
\]
It follows that 
\[
\lim_{z\to\infty} \frac{z^{-2}\left(z^{q/s}(1+\log z)^{\beta q}\right)^{2/q}}{\left(\int_0^{1/z}t^{p/r-1}(1-\log t)^{\alpha p}\,dt\right)^{2/p}}<\infty.
\]
Taking the $2/p$ exponent outside the limit and applying L'Hospital's rule, gives,
\[
\lim_{z\to\infty} \frac{z^{-p+p/s-1}(1+\log z)^{\beta p}(-p+p/s+\beta p/(1+\log z))}{-z^{-2}z^{1-p/r}(1+\log z)^{\alpha p}}<\infty.
\]
Now it follows that either $1/r+1/s<1$, or  $1/r+1/s=1$ and $\beta\le \alpha$.
\end{proof}

\end{document}